\newtheorem*{remark}{\bf Remark}
\newtheorem*{question}{\bf Question}
\newtheorem{theorem}{\bf Theorem}[section]
\newtheorem{proposition}[theorem]{\bf Proposition}
\newtheorem{definition}[theorem]{\bf Definition}
\newtheorem{Theorem}{\bf Theorem}
\newtheorem{lemma}[theorem]{\bf Lemma}
\newtheorem{corollary}[theorem]{\bf Corollary}
\newtheorem*{conjecture}{\bf Conjecture}
\newtheorem{Corollary}{\bf Corollary}
\def\C{{\mathbb C}}
\def\N{{\mathbb N}}
\def\R{{\mathbb R}}
\def\Z{{\mathbb Z}}
\def\B{\mathbb{B}}
\def\D{\mathbb{D}}
\def\J{\mathcal{J}}
\def\P{\mathbb{P}}
\def\supp{\textup{supp}}
\def\pe{\textup{ := }}
\def\rat{\textup{Rat}}
\def\bif{\textup{bif}}
\def\J{\mathcal{J}}
\def\un{{\underline{n}}}
\def\and{{\quad\text{and}\quad}}
\newcommand\rlem[1]{{Lemma~\ref{lem#1}}}
\def\Xi{\varphi}
\newcommand\requ[1]{\eqref{equ#1}}
\def\J{\mathcal{J}}
\def\un{\underline{n}}
\def\eps{\varepsilon}
\def\ga{\gamma}
\def\ka{\kappa}
\def\Ga{\Gamma}
\def\Si{\Sigma}
\def\be{\beta}
\def\Om{\Omega}
\def\al{\alpha}
\def\la{{\lambda}}
\def\La{{\Lambda}}
\def\si{{\sigma}}
\newcommand{\de}{{\bf \delta}}
\def\Dist{\mathrm{Dist}}
\def\dist{\mathrm{d}}
\def\diam{\mathrm{diam}}
\def\calC{\mathcal{C}}
\def\se{{\subseteq}}
\def\d{\mathrm d}
\def\th{\theta}
\def\pa{{\partial}}
\def\len{{\mathrm{l}}}
\def\sm{\setminus}
\def\and{{\quad\text{and}\quad}}
\title{Collet, Eckmann and the bifurcation measure}
\author{Matthieu Astorg}
\address{MAPMO, 
Universit\'e d'Orl\'eans, Route de Chartres
B.P. 6759 - 45067 ORL\'EANS Cedex 2, FRANCE}
\email{matthieu.astorg@univ-orleans.fr}
  \author{Thomas Gauthier}
\address{LAMFA, UPJV, 33 rue Saint-Leu, 80039 AMIENS Cedex 1, FRANCE}
\address{CMLS, \'Ecole polytechnique, CNRS, Universit\'e Paris-Saclay, 91128 PALAISEAU Cedex, FRANCE}
\email{thomas.gauthier@u-picardie.fr}
  \author{Nicolae Mihalache}
\address{LAMA, UPEC, 61 avenue du G\'en\'eral de Gaulle, 94010 CR\'ETEIL Cedex, FRANCE}
\email{nicolae.mihalache@u-pec.fr}
\author{Gabriel Vigny}
\address{LAMFA, UPJV, 33 rue Saint-Leu, 80039 AMIENS Cedex 1, FRANCE}
\email{gabriel.vigny@u-picardie.fr}
\begin{document}

\begin{abstract}
The moduli space $\mathcal{M}_d$ of degree $d\geq2$ rational maps can naturally be endowed with a measure $\mu_\bif$ detecting maximal bifurcations, called the bifurcation measure. We prove that the support of the bifurcation measure $\mu_\bif$ has positive Lebesgue measure. To do so, we establish a  general sufficient condition for the conjugacy class of a rational map to belong to the support of $\mu_\bif$ and we exhibit a large set of Collet-Eckmann rational maps which satisfy that condition. As a consequence, we get a set of  Collet-Eckmann rational maps of positive Lebesgue measure which are approximated by hyperbolic rational maps.
\end{abstract}

\maketitle
\tableofcontents

\section{Introduction}
For an integer $d\geq 2$, we let $\rat_d$ be the space of rational maps of degree $d$ of the Riemann sphere $\P^1$. 
The space $\rat_d$ of degree $d$ rational maps is a quasi-projective subvariety of dimension $2d+1$. More precisely, there exists an (irreducible) variety $V$ such that $\rat_d\simeq\P^{2d+1}\setminus V$. The $J$-\emph{stability locus} in $\rat_d$ is defined as the set of maps $f$ which are structurally stable on their Julia set $\J_f$. By the seminal work~\cite{MSS} of Ma\~n\'e, Sad and Sullivan, it is an open and dense subset of $\rat_d$.
The {\em bifurcation locus} in $\rat_d$ is its complement.

One can give a measurable description of this bifurcation as follows: any rational map $f\in\rat_d$ has a unique measure $\mu_f$ of maximal entropy $\log d$. The \emph{Lyapunov exponent} of $f$ with respect to the measure $\mu_f$ can be defined as $L(f):=\int_{\P^1}\log|f'|\, \mu_f$, where $|\cdot|$ is any hermitian metric on $\P^1$.
 DeMarco \cite{DeMarco1} proved that the function $L:f\in\rat_d\mapsto L(f)\in\R_+$ is plurisubharmonic (\emph{p.s.h} for short) and continuous and that the bifurcation locus is the support of the positive closed $(1,1)$-current $T_{\bif}:= dd^c L$.

Moreover, M\"obius transformations act by conjugacy on $\rat_d$ and the quotient space is an orbifold affine variety of dimension $2d-2$ which is known as the {\em moduli space} $\mathcal{M}_d$ of degree $d$ rational maps. As the function $L$ is invariant by conjugacy, the function $L$ induces a \emph{p.s.h} and continuous function $\mathcal{L}:\mathcal{M}_d\longrightarrow\R$. The support of $dd^c\mathcal{L}$ coincides with the image of the bifurcation locus under the quotient map $\Pi:\rat_d\to\mathcal{M}_d$.
  \begin{definition}
The \emph{bifurcation measure} of the space $\mathcal{M}_d$ is the positive measure
\begin{center}
$\mu_\bif := (dd^c\mathcal{L})^{2d-2}$.
\end{center}
\end{definition}
Notice that the support of $\mu_\bif$ is strictly contained in the bifurcation locus.
This measure has been introduced by Bassanelli and Berteloot in \cite{BB1}. Its support is exactly the accumulation set of the hyperbolic (conjugacy class of) rational maps (see~\cite{BB1} and also Buff and Epstein \cite{buffepstein} and the second author \cite{neutral} for other dynamical characterizations). Recall that a rational map is \emph{hyperbolic} if it is uniformly expanding on its Julia set  $\J_f$, i.e. there exist $C>0$ and $\lambda>1$ such that for any $n\geq1$ and any $z\in\J_f$, $|(f^n)'(z)|\geq C\cdot \lambda^n$.
 Another way to define $\mu_\bif$ is as follows: it is the only finite measure on $\mathcal{M}_d$ such that $\Pi^*(\mu_\bif)=T_\bif^{2d-2}$ as closed positive $(2d-2,2d-2)$-currents on $\rat_d$ (see~\cite[\S 6]{BB1}).

~

In the family $\{z^2+c\}_{c\in\mathbb{C}}$ of quadratic polynomials, this measure is exactly the harmonic measure of the Mandelbrot set and its support is exactly the boundary of the Mandelbrot set. One of the important questions in complex dynamics which is still open is the following.
\begin{question}
Does the boundary of the Mandelbrot set have positive area?
\end{question}
It is natural to ask similar questions in the moduli space  $\mathcal{M}_d$ of degree $d$ rational maps.
For that, let us \emph{naturally} endow $\mathcal{M}_d$ with a volume form. the moduli space $\mathcal{M}_d$ is a affine variety in some $\P^N$. Let $\omega_{\P^N}$ be the Fubini-Study form on $\P^N$.  Its restriction defines a $(1,1)$ form $\omega_d$ on $\mathcal{M}_d$ and we consider the volume form on $\mathcal{M}_d$
\[\textup{Vol}_{\mathcal{M}_d}:=\omega_d^{2d-2}.\]
This is (up to renormalization) a probability measure on $\mathcal{M}_d$. Moreover, in any (smooth) local chart of $\mathcal{M}_d$ it is smoothly equivalent to the Lebesgue measure, hence it is a non-degenerate volume form in $\mathcal{M}_d$.
 For the bifurcation locus, Rees showed that it has positive $\textup{Vol}_{\mathcal{M}_d}$-measure \cite{Rees}.  This leaves the question of the volume of the support of the measure $\mu_\bif$ in $\mathcal{M}_d$.

 Another major problem concerning the moduli space $\mathcal{M}_d$ is the \emph{Hyperbolicity Conjecture}:
 \begin{conjecture}[Hyperbolicity]
 	Hyperbolic rational maps are dense in $\rat_d$.
 \end{conjecture}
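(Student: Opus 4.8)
The following is a sketch of the strategy one would pursue towards this conjecture --- the celebrated Fatou--Hyperbolicity problem (density of hyperbolicity) --- together with the obstruction that keeps it open. The plan is to reduce it, via the Ma\~n\'e--Sad--Sullivan theorem quoted above, to a \emph{rigidity} statement. Since the $J$-stability locus is open and dense in $\rat_d$, it suffices to prove that it contains no \emph{queer} component, i.e. that every $J$-stable map is hyperbolic. Along a stable component $\Omega$ the Julia sets move by a holomorphic motion and the dynamics on $\J_f$ is quasiconformally conjugate; were $\Omega$ to consist of non-hyperbolic maps that are not Latt\`es examples, differentiating this motion would produce a measurable $f$-invariant line field carried by a positive-measure subset of $\J_f$. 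Hence the conjecture follows from the \emph{No Invariant Line Fields Conjecture}: the Julia set of a non-Latt\`es rational map supports no measurable invariant line field. In the quadratic family $\{z^2+c\}$ this reduction is classical, and there the required rigidity is in turn implied by local connectivity of the Mandelbrot set (MLC).

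To establish the rigidity itself I would follow the template completed, in the one-dimensional real setting, by Graczyk--\'Swiatek, Lyubich and Kozlovski--Shen--van Strien, in three steps: (i) for finitely renormalizable maps, prove \emph{combinatorial rigidity} --- two maps with the same combinatorics are conformally conjugate --- via a Yoccoz-type puzzle and \emph{a priori (complex) bounds} controlling the shapes of deep puzzle pieces; (ii) for infinitely renormalizable maps, derive rigidity from renormalization theory, using a priori bounds and hyperbolicity of the renormalization operator after Sullivan, McMullen and Lyubich; (iii) combine these with transversality and density arguments showing that the combinatorially exceptional parameters, where rigidity could fail, form a set with empty interior, so that hyperbolic parameters are dense. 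Carrying this out for \emph{arbitrary} rational maps --- not merely unicritical polynomials or real one-dimensional maps --- is exactly the content that must be added.

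The main obstacle is precisely that last point. For a general rational map there is no known workable combinatorial model, no analogue of the Yoccoz puzzle; the several free critical orbits can interact in ways one cannot at present control; the a priori bounds required in step (i) are unavailable; and renormalization is understood only in restricted classes, so step (ii) is equally out of reach. Even the No Invariant Line Fields Conjecture, which would already suffice, is known only in special cases --- finitely renormalizable unicritical polynomials, summable or Collet--Eckmann maps, and a handful of others. A complete proof would therefore require either a new rigidity principle valid throughout $\rat_d$, or a new mechanism forbidding invariant line fields on Julia sets of positive area. The results obtained here --- a positive-measure set of Collet--Eckmann maps that are nonetheless limits of hyperbolic maps --- should be read as evidence for this program and as one small piece of it, rather than as its resolution.
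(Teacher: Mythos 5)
This is stated as a \emph{conjecture} in the paper, not a theorem, and the paper contains no proof of it; you have, quite rightly, not offered one either. Your text is an honest survey of the standard reduction (Ma\~n\'e--Sad--Sullivan $\Rightarrow$ open dense $J$-stability $\Rightarrow$ reduce to excluding queer components $\Rightarrow$ No Invariant Line Fields Conjecture, following McMullen--Sullivan), together with an accurate sketch of the Graczyk--\'Swi\k{a}tek / Lyubich / Kozlovski--Shen--van Strien template in the one-dimensional and unicritical settings and a candid account of why that template does not currently extend to all of $\rat_d$. None of this is wrong, and the closing remark correctly situates the present paper's results (a positive-measure set of Collet--Eckmann parameters in $\supp(\mu_\bif)$, hence accumulated by hyperbolic maps) as evidence for, not a resolution of, the conjecture.

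Since the statement is a famous open problem and the paper itself only uses the conjecture as motivation, there is no ``paper's own proof'' to compare against, and your proposal is not --- and does not claim to be --- a proof. The only caution I would add is a presentational one: what you have written is a research program, and it would be a mistake to let the framing of the exercise tempt you into presenting it as a deduction. In particular, the reduction to No Invariant Line Fields is a genuine theorem (and worth citing precisely), but NILF itself is just as open as density of hyperbolicity in the generality of $\rat_d$, so the reduction moves the difficulty rather than dissolving it --- a point you already make, and which should remain front and center.
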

 In the present paper, we are interested in the following simpler but related problem: is any Collet-Eckmann rational map approximated by hyperbolic rational maps? Refining Rees' results, Aspenberg \cite{Asp13} showed that there is a set of positive $\textup{Vol}_{\mathcal{M}_d}$-measure of (suitable) Collet-Eckmann rational map (such maps are in the bifurcation locus).
A rational map $f\in\rat_d$ is \emph{Collet-Eckmann} if the critical set $\mathcal{C}(f)$ of $f$ is contained in $\J_f$ and if there exist $\gamma,\gamma_0>0$ such that 
 \begin{equation}\label{eqn:CE}
 |(f^n)'(f(c))|\geq e^{n\gamma-\gamma_0},\tag{CE$(\gamma,\gamma_0)$}
 \end{equation}
 for any $c\in \mathcal{C}(f)$ and any $n\geq 0$.

Our main result is the following
\begin{Theorem}\label{tm:leb}
Pick an integer $d\geq2$ and let $[f]\in\supp(\mu_\bif)$. Then for any open neighborhood $\Omega\subset\mathcal{M}_d$ of $[f]$, there is a set $CE_\Omega \subset \Omega \cap \supp(\mu_\bif)$ of Collet-Eckmann maps with:
\[\textup{Vol}_{\mathcal{M}_d}(CE_\Omega ))>0.\]
\end{Theorem}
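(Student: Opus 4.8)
The plan is to combine three ingredients: a localization step near $[f]$, a parameter-exclusion argument of Benedicks--Carleson / Aspenberg type carried out in a $(2d-2)$-dimensional slice of $\mathcal{M}_d$, and a general criterion placing suitable Collet--Eckmann maps inside $\supp(\mu_\bif)$. For the localization, I would first show that \emph{nice Misiurewicz maps} are dense in $\supp(\mu_\bif)$: by such a map I mean a $[g_0]$ with $\mathcal{C}(g_0)\subset\J_{g_0}$, all critical points strictly preperiodic and landing on pairwise distinct repelling cycles, and with the $2d-2$ prerepelling relations mutually transverse in moduli space (so that the $2d-2$ critical points bifurcate independently at $[g_0]$). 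Such maps should be dense in $\supp(\mu_\bif)$ for the same reason that Misiurewicz points are dense in $\partial\Mand$; granting this, fix one such $g_0\in\Omega$ and shrink so that a polydisk $B$ around $[g_0]$, taken in a holomorphic chart of a suitable finite branched cover of $\mathcal{M}_d$ marking the critical points, is contained in $\Omega$.

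Next, working in the $(2d-2)$-dimensional chart $B$, I would run the inductive parameter-exclusion scheme of Benedicks--Carleson in the form adapted to rational maps by Aspenberg~\cite{Asp13}, but anchored at the prescribed map $g_0$ rather than at an abstract one. At each stage one discards the parameters for which some critical orbit makes a deep return near $\mathcal{C}$ or loses the exponential derivative lower bound; the transversality of the Misiurewicz data at $g_0$ furnishes the comparison between the parameter derivative of each critical value and its dynamical derivative that makes the excluded proportions summable. The outcome is a set $CE_B\subset B$ with $\textup{Vol}_{\mathcal{M}_d}(CE_B)>0$ --- indeed with $[g_0]$ a Lebesgue density point --- consisting of maps satisfying \eqref{eqn:CE} with uniform constants and with all critical points in the Julia set, hence Collet--Eckmann.

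It remains to place these maps in $\supp(\mu_\bif)$, and here I would prove the general sufficient condition: if $g$ is Collet--Eckmann and its $2d-2$ critical orbits are \emph{independent} (no postcritical coincidences, together with the transversality surviving from the previous step), then $[g]\in\supp(\mu_\bif)$. The proof is perturbative and rests on the description of $\supp(\mu_\bif)$ as the closure of the set of maps carrying $2d-2$ distinct attracting cycles (Bassanelli--Berteloot~\cite{BB1}): the exponential expansion along a critical orbit lets one, via an implicit function argument, perturb $g$ so as to capture the critical point $c_i$ in a newly created attracting cycle, and independence of the orbits permits doing this simultaneously for all $i$, producing maps with $2d-2$ distinct attracting cycles arbitrarily close to $g$. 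Applying this to the maps of the previous step --- the exclusion there being organized to retain the required independence --- and discarding a further null set if necessary, we obtain $CE_\Omega:=CE_B\subset\Omega\cap\supp(\mu_\bif)$ with $\textup{Vol}_{\mathcal{M}_d}(CE_\Omega)>0$.

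The main obstacle is the second step: pushing the Benedicks--Carleson / Aspenberg machinery through in the full $(2d-2)$-dimensional moduli space, anchored at an \emph{arbitrary} prescribed Misiurewicz map, with \emph{uniform} Collet--Eckmann constants and simultaneous control of all $2d-2$ critical orbits, while keeping enough structure to feed into the last step. A secondary, and conceptually decisive, difficulty is the formulation of the criterion in the last step: it must be weak enough to hold for the maps produced by the exclusion yet strong enough to force membership in the \emph{small} set $\supp(\mu_\bif)$ rather than merely in the bifurcation locus --- which is exactly why all $2d-2$ critical points must be independently active, not just one of them.
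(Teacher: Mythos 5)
Your overall architecture — dense Misiurewicz anchors in $\supp(\mu_\bif)$, an Aspenberg-style parameter exclusion around a chosen anchor, then a criterion placing the resulting Collet--Eckmann maps back in $\supp(\mu_\bif)$ — matches the paper's. Steps one and two are essentially the paper's (it invokes the Main Theorem of \cite{buffepstein} for density of strongly Misiurewicz classes, and Theorem~\ref{tm:abundance}, which is Aspenberg's result \cite{Asp13} repackaged, for the positive-measure density-point statement).

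The genuine divergence, and the gap, is in your third step. You propose to force $[g]\in\supp(\mu_\bif)$ by directly perturbing a Collet--Eckmann $g$ so as to capture all $2d-2$ critical points simultaneously in newly-created attracting cycles, citing the Bassanelli--Berteloot characterization of $\supp(\mu_\bif)$ and ``an implicit function argument.'' This is not a proof but precisely the statement whose quantitative content is the hard part. For a non-hyperbolic map, exponential expansion along a critical orbit gives you no attracting cycle to perturb into; to create one you must show the critical value's orbit reaches a definite scale at controlled parameter cost, so that a parameter move of comparable size can pin the orbit onto a cycle. Doing this for one critical point is already Tan Lei--type phase-parameter transfer; doing it \emph{simultaneously} for all $2d-2$ requires that the $2d-2$ parameter moves, each of its own dynamically-determined size, can be made jointly and transversally in a $(2d-2)$-dimensional slice. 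That is exactly what the paper's \emph{large scale condition} encodes, and what Theorem~\ref{tm:bigscale} verifies (via the Tsujii--Benedicks--Carleson distortion estimates of Section~\ref{sec:distortion}, the transversality condition from \cite{Astorg}, and the Sibony--Wong extension lemma to pass from lines to a ball of parameters). The paper then avoids constructing attracting cycles altogether: Theorem~\ref{tm:suppTm} converts the large scale condition into membership in $\supp(T_\bif^{2d-2})$ by a soft mass-positivity argument with the wedge of bifurcation currents (Lemmas~\ref{lm:formula}, \ref{lm:formula2} and Proposition~\ref{prop:formula3}). Your instinct that ``independence of the orbits permits doing this simultaneously'' is correct in spirit, but as written it assumes away the quantitative shadowing/transversality estimates that constitute the bulk of the paper; you acknowledge the difficulty in your closing paragraph but do not supply the missing criterion, which is where the theorem actually lives.
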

This implies the following result reated to the above conjecture.
\begin{Corollary}\label{tm:approxhyperbolic}
There exists a set of positive $\textup{Vol}_{\rat_d}$-measure of Collet-Eckmann rational maps $f$ of $\rat_d$ with $[f] \in \supp(\mu_\bif)$ such that $f$ is approximated by hyperbolic rational maps.
\end{Corollary}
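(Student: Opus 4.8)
The plan is to deduce the corollary from Theorem~\ref{tm:leb} by a pull-back argument through the quotient map $\Pi:\rat_d\to\mathcal{M}_d$. First I would recall that $\Pi$ is a submersion (away from the locus of maps with nontrivial automorphisms, which is a proper subvariety, hence of zero $\textup{Vol}_{\rat_d}$-measure), and that $\supp(\mu_\bif)$ is by definition the accumulation set of hyperbolic conjugacy classes; moreover $\mu_\bif$ is supported on the image of the bifurcation locus, so every $[g]\in\supp(\mu_\bif)$ is a limit of hyperbolic classes $[h_n]$. The point is that hyperbolicity is preserved under $\mathrm{M\ddot{o}bius}$-conjugacy and is an open condition, so a conjugacy class $[h]$ is hyperbolic if and only if every (equivalently, some) representative $h\in\rat_d$ is hyperbolic, and the set of hyperbolic maps in $\rat_d$ is exactly $\Pi^{-1}(\{\text{hyperbolic classes}\})$; similarly Collet-Eckmann is a conjugacy-invariant notion since both $\mathcal{C}(f)\subset\J_f$ and the estimate \requ{:CE} transform correctly under conjugacy (the distortion factor is bounded along any given orbit since the conjugating M\"obius map is bi-Lipschitz on $\P^1$, which only affects $\gamma_0$).

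Next I would apply Theorem~\ref{tm:leb} with a choice of $[f]\in\supp(\mu_\bif)$ and a suitable neighborhood $\Omega$, obtaining $CE_\Omega\subset\Omega\cap\supp(\mu_\bif)$ with $\textup{Vol}_{\mathcal{M}_d}(CE_\Omega)>0$. I then set $\widetilde{CE}:=\Pi^{-1}(CE_\Omega)$ and remove from it the (measure-zero) locus of maps with nontrivial automorphisms; every map in this set is Collet-Eckmann by conjugacy-invariance, and has conjugacy class in $\supp(\mu_\bif)$. To see $\textup{Vol}_{\rat_d}(\widetilde{CE})>0$, I would use that $\Pi$ is a smooth submersion, so by the coarea formula (or simply by working in local charts where $\Pi$ looks like a linear projection, since $\rat_d$ has dimension $2d+1$ and $\mathcal{M}_d$ dimension $2d-2$, the fibers being $3$-dimensional $\mathrm{PSL}_2(\C)$-orbits) the preimage of a positive-measure set has positive measure; concretely, $\Pi_*(\textup{Vol}_{\rat_d})$ is absolutely continuous with respect to $\textup{Vol}_{\mathcal{M}_d}$ with a strictly positive density on the orbifold-smooth part, which gives $\textup{Vol}_{\rat_d}(\Pi^{-1}(CE_\Omega))>0$ directly.

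Finally, for the approximation statement: since $CE_\Omega\subset\supp(\mu_\bif)$ and $\supp(\mu_\bif)$ is precisely the accumulation set of hyperbolic conjugacy classes (as recalled from~\cite{BB1}), every $[g]\in CE_\Omega$ is a limit in $\mathcal{M}_d$ of hyperbolic classes. Lifting: given $f\in\widetilde{CE}$ with $\Pi(f)=[g]$ and a sequence $[h_n]\to[g]$ of hyperbolic classes, I would use continuity of $\Pi$ together with the fact that $\Pi$ is open (being a submersion on the smooth part, or using that $\Pi$ admits local holomorphic sections) to produce representatives $h_n\in\rat_d$ with $h_n\to f$ and each $h_n$ hyperbolic. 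Hence every $f$ in the positive-measure set $\widetilde{CE}$ is approximated by hyperbolic rational maps, which is the claim. The main obstacle — really the only subtle point — is handling the orbifold structure of $\mathcal{M}_d$ and the maps with symmetries cleanly: one must check that the bad locus is genuinely $\textup{Vol}_{\rat_d}$-negligible and that $\Pi$ behaves like an honest submersion off it, and that local sections of $\Pi$ exist near the relevant points so that limits downstairs lift to limits upstairs; all of this is standard (see e.g.\ the construction of $\mathcal{M}_d$), so the corollary is essentially a formal consequence of Theorem~\ref{tm:leb} and the characterization of $\supp(\mu_\bif)$.
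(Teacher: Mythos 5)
Your proposal is correct and is precisely the argument the paper intends when it says the corollary is ``implied'' by Theorem~\ref{tm:leb}: pull back the positive-measure set $CE_\Omega\subset\mathcal{M}_d$ under $\Pi$, use that $\Pi$ is a submersion away from the (measure-zero) locus of maps with nontrivial automorphisms to get positive $\textup{Vol}_{\rat_d}$-measure, and use the openness of $\Pi$ (automatic for a quotient by a group action) to lift the approximation of $\supp(\mu_\bif)$ by hyperbolic conjugacy classes to an approximation of each representative by hyperbolic maps in $\rat_d$. One small wording slip: $\supp(\mu_\bif)$ being the accumulation set of hyperbolic classes is not ``by definition'' but a theorem of Bassanelli--Berteloot recalled in the introduction; you do cite~\cite{BB1} for it, so the substance is fine, and note that the paper's later Theorem~\ref{tm:approx} gives a strengthened version of exactly this approximation step.
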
 
We rely on the following strategy: we first give a very general sufficient condition for a conjugacy class of rational maps to belong to the support of the bifurcation measure (see Theorem~\ref{tm:suppTm}). Then we exhibit a large set of rational maps fulfilling this condition. Theorem~\ref{tm:leb} becomes a corollary of Theorem~\ref{tm:suppTm}, Theorem~\ref{tm:abundance} and Theorem~\ref{tm:bigscale} below and of the Main Theorem of~\cite{buffepstein}.

~

When a rational map has simple critical points, one can follow them holomorphically as maps $c_1,\ldots,c_{2d-2}:V\to\P^1$ where $V$ is an open neighborhood of $f$ in $\rat_d$. For any $(2d-2)$-tuple of positive integers $\un:=(n_1,\ldots,n_{2d-2})$, one also can define a holomorphic map $\mathfrak{V}_{\un}:V\longrightarrow(\P^1)^{2d-2}$ by letting
\[\mathfrak{V}_{\un}(g):=\left(g^{n_1}(c_{1}(g)),\ldots,g^{n_{2d-2}}(c_{2d-2}(g))\right).\]
This map detects, in a certain sense, the collective stability/instability of the critical points $c_1,\ldots,c_{2d-2}$. In the sequel, we will use the following definition in a crucial way.
\begin{definition}
Let $f\in\rat_d$ with simple critical values and let $\Gamma_{\un}$ denote the graph of $\mathfrak{V}_{\un}$. We say that $f$ satisfies the \emph{large scale condition} if
 there exist a (local) complex submanifold $\Lambda_f\subset\rat_d$ of dimension $2d-2$, a sequence $\un_k=(n_{k,1},\ldots,n_{k,2d-2})$ of $(2d-2)$-tuples with $n_{k,j}\rightarrow+\infty$ for all $j$, a basis of neighborhoods $\{\Omega_k\}_{k\geq1}$ of $f$ in $\Lambda_f$ and $\delta>0$ such that for any $k$,  the connected component of $\Gamma_{\un_k}\cap \Omega_k\times \left(\D(f^{n_{k,1}}(c_{1}(f)),\delta)\times\cdots\times \D(f^{n_{k,2d-2}}(c_{{2d-2}}(f)),\delta) \right) $ containing $(f,\mathfrak{V}_{\un_k}(f))$ is contained in $\Omega'_k\times \left(\D(f^{n_{k,1}}(c_{1}(f)),\delta)\times\cdots\times \D(f^{n_{k,2d-2}}(c_{{2d-2}}(f)),\delta) \right)$ for some $\Omega'_k \Subset \Omega_k$.
\end{definition}
This means that a rational map $f$ satisfies the large scale condition if, infinitely many times, the map $\mathfrak{V}_{\un}$ sends an arbitrarily small neighborhood of $[f]$ in the moduli space $\mathcal{M}_d$ to a polydisk of fixed size in $(\P^1)^{2d-2}$ and its graph is vertical-like near $[f]$.  
The first step of our proof consists in proving that the large scale condition is actually a sufficient condition for maximal bifurcations to occur at $f$. More precisely, we prove the following.
\begin{Theorem}\label{tm:suppTm}
Pick $f\in\rat_d$. Assume that $\omega(c)\subset\J_f$ for all $c\in\mathcal{C}(f)$, that $f$ has simple critical points and that $f$ satisfies the large scale condition. Then $[f]\in\supp(\mu_\bif)$.
\end{Theorem}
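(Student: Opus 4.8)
The plan is to use the well-known criterion relating the support of the bifurcation measure $\mu_{\bif}=(dd^c\mathcal L)^{2d-2}$ to the simultaneous "transversal" activity of all critical points, and to extract this activity from the large scale condition via a normal families / Montel argument. Concretely, work in a neighborhood $V\subset\rat_d$ of $f$ on which the critical points are followed holomorphically as $c_1,\dots,c_{2d-2}$, and consider the local submanifold $\Lambda_f$ of dimension $2d-2$ supplied by the large scale condition, with its basis of neighborhoods $\{\Omega_k\}$. For each $j$ let $T_j$ be the activity current of the $j$-th critical point, obtained as $dd^c$ of the (local) potential $\lim_k \frac1{d^{n_k}}\log\|f^{n}(c_j)\|$ along suitable subsequences; by the Bassanelli--Berteloot theory (and the characterization recalled in the introduction via \cite{buffepstein}), it suffices to show that $T_1\wedge\cdots\wedge T_{2d-2}$ has nonzero mass arbitrarily close to $f$, equivalently that $(dd^c\mathcal L)^{2d-2}$ does not vanish near $[f]$. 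So the crux is to produce, in every $\Omega_k$, a genuine $(2d-2)$-fold intersection of the "activity hypersurfaces."

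The key steps, in order, are as follows. First, reformulate the large scale condition as a statement about the holomorphic maps $\mathfrak V_{\un_k}\colon\Omega_k\to(\P^1)^{2d-2}$: after recentering each factor of the target polydisk at $f^{n_{k,j}}(c_j(f))$ by a Möbius change of coordinates, we get maps $\Phi_k\colon\Omega_k\to\D(0,\delta)^{2d-2}$ with $\Phi_k(f)=0$, and the condition says precisely that the connected component of $\Phi_k^{-1}(\D(0,\delta)^{2d-2})$ through $f$ is relatively compact in $\Omega_k$, i.e. $\Phi_k$ is \emph{proper} from that component onto $\D(0,\delta)^{2d-2}$. Second, invoke the basic fact that a proper holomorphic map between equidimensional complex manifolds is surjective and has a well-defined positive topological degree $m_k\geq 1$; hence $\Phi_k$ takes \emph{every} value in $\D(0,\delta)^{2d-2}$, counted with multiplicity $m_k$. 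Third, translate this into currents: on $\Omega_k$ one has $\Phi_k^*(\text{(point mass at }0)) = m_k\,[\Phi_k^{-1}(0)]$ as $(2d-2,2d-2)$-currents, and more robustly, for the standard potentials $u_j$ on $\D(0,\delta)$ with $dd^c u_j=$ a fixed probability measure of total mass $1$ supported near $0$, properness gives $\bigwedge_{j}\Phi_k^*(dd^c u_j) = \Phi_k^*\big(\bigwedge_j dd^c u_j\big)$ with total mass $m_k\geq1$ on $\Omega_k$. Fourth, identify $\Phi_k^*(dd^c u_j)$ with (a piece of) the bifurcation current of the $j$-th critical point restricted to $\Lambda_f$: since $u_j\circ\Phi_k$ differs from $\frac1{d^{n_{k,j}}}\log\|f^{n_{k,j}}(c_j)-f^{n_{k,j}}(c_j(f))\|$ by a harmonic term plus an $O(d^{-n_{k,j}})$ correction (the Green function estimate for $\mu_f$), passing to the limit $k\to\infty$ on $\Lambda_f$ yields that the wedge $T_1\wedge\cdots\wedge T_{2d-2}|_{\Lambda_f}$ has mass $\geq \liminf_k m_k \geq 1$ on every $\Omega_k$, hence charges every neighborhood of $f$ in $\Lambda_f$. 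Finally, by the results recalled above this wedge is (up to the conjugacy quotient and a positive constant) exactly $\mu_{\bif}$ near $[f]$; combined with the hypothesis $\omega(c)\subset\J_f$ for all critical $c$ — which guarantees that the potentials are pluriharmonic only where they should be and that the naive wedge really computes the bifurcation measure rather than a proper-subvariety phenomenon — we conclude $[f]\in\supp(\mu_{\bif})$.

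The main obstacle I expect is the fourth step: controlling the intersection product $\bigwedge_j \Phi_k^*(dd^c u_j)$ under the limit $k\to\infty$ and showing that no mass escapes or collapses onto a lower-dimensional set, so that the limiting wedge $T_1\wedge\cdots\wedge T_{2d-2}$ is genuinely nonzero near $f$ and coincides with $\mu_{\bif}$. This requires (a) a uniform local-mass bound so that the sequence of currents $\Phi_k^*(dd^c u_j)$ on $\Lambda_f$ is relatively compact, (b) continuity of the wedge product, which is where one uses that the limiting potentials are continuous (the Lyapunov/Green potentials are continuous, as recalled from DeMarco's theorem) so that Bedford--Taylor theory applies and $dd^c u_j\circ\Phi_k \to T_j|_{\Lambda_f}$ implies convergence of the wedge, and (c) ruling out that the whole mass concentrates on the "diagonal-type" locus where several critical orbits are synchronized — here the properness/degree lower bound $m_k\geq1$ is exactly what prevents degeneration, since a positive topological degree forces the preimage $\Phi_k^{-1}(0)$ to be nonempty and $0$-dimensional generically. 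The hypothesis that $f$ has simple critical points is used throughout to make the holomorphic motion $c_j(g)$ well-defined, and $\omega(c)\subset\J_f$ ensures we are genuinely in the "activity" regime where these currents are the bifurcation currents; assembling (a)--(c) into a clean statement, rather than the estimates themselves, is the real work.
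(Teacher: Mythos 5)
Your reformulation of the large scale condition as properness of $\Phi_k$ on the connected component through $f$, and the conclusion that a proper holomorphic map between equidimensional manifolds is a finite branched cover of degree $m_k\geq1$, are correct and in the same spirit as the paper's phase-parameter transfer. The real difficulty, as you anticipate, is your step 4, and unfortunately that step as stated does not work.

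There are two linked problems. First, a scaling inconsistency: if $u_j$ is a fixed potential on $\D(0,\delta)$ with $dd^cu_j$ a probability measure, then by degree theory $\Phi_k^*\big(\bigwedge_jdd^cu_j\big)$ has total mass exactly $m_k$; yet you also want $u_j\circ\Phi_k$ to equal $d^{-n_{k,j}}\log\lvert\cdots\rvert$ plus a harmonic term, which would force $\Phi_k^*(dd^cu_j)=d^{-n_{k,j}}dd^c\log\lvert\cdots\rvert$, a current whose mass has an extra exponentially small prefactor $d^{-n_{k,j}}$. The two normalizations are incompatible, and the incompatibility shows up in the conclusion: you assert that $T_1\wedge\cdots\wedge T_{2d-2}\big\vert_{\Lambda_f}$ has mass $\geq\liminf_k m_k\geq1$ on every $\Omega_k$, but since $\{\Omega_k\}$ is a neighborhood basis of $f$ and the wedge has finite total mass, this would force an atom at $[f]$, which is not what one expects nor what the theorem claims. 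The exponential factor cannot simply be absorbed. Second, the identification of potentials is not the right one: the local potential of the bifurcation current $T_j$ is $G(\lambda,\sigma\circ v_j(\lambda))=\lim_nd^{-n}\log\lVert F_\lambda^n(\sigma\circ v_j(\lambda))\rVert$, measuring the size of the critical orbit in homogeneous coordinates, and it is not a limit of $d^{-n}\log\lvert\xi_n^j(\lambda)-\xi_n^j(\lambda_0)\rvert$, which measures divergence of two critical orbits. So the comparison ``$u_j\circ\Phi_k$ differs from $d^{-n_{k,j}}\log\lVert f^{n_{k,j}}(c_j)-f^{n_{k,j}}(c_j(f))\rVert$ by a harmonic term plus an $O(d^{-n_{k,j}})$ correction'' does not give convergence of $\Phi_k^*(dd^cu_j)$ to $T_j\big\vert_{\Lambda_f}$.

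The paper circumvents both issues by a genuinely different mechanism. It introduces the fibered current $\widehat T=\hat\omega+dd^cg$ on $\Lambda\times\P^1$, which is \emph{exactly} invariant, $\hat f^*\widehat T=d\cdot\widehat T$, and proves (Lemma~\ref{lm:formula2}) the exact identity
\[
\mu(\Omega)=d^{-\lvert\un\rvert}\int_{\Omega\times(\P^1)^m}\left(\bigwedge_{j=1}^m(\pi_j)^*\widehat T\right)\wedge\left[\mathcal V_{\un}\right]
\]
for the wedge $\mu=T_1\wedge\cdots\wedge T_m$ and for every $\un$. The exponential prefactor $d^{-\lvert\un\rvert}$ is then harmless: to get $\mu(\Omega)>0$ it suffices to show that the integral on the right is positive for a single $\un=\un_k$. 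This is done by normalizing the relevant graph piece $[S_k]/\lVert[S_k]\rVert$, letting it converge weakly to $M\cdot[\{\lambda_0\}\times\D_\delta^m(x)]$ (by extremality, exactly because of the vertical-like containment from the large scale condition), and using that $\widehat T$ has \emph{continuous} potentials so that the wedge passes to the limit. The hypothesis $\omega(c)\subset\J_f$ is then used precisely at the last step, via Lemma~\ref{lm:limit}: the slice $\widehat T\vert_{\lambda=\lambda_0}$ is the maximal entropy measure $\mu_{\lambda_0}$, and $\prod_j\mu_{\lambda_0}(\D(x_j,\delta))>0$ because $x_j\in\J_{\lambda_0}$. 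Your ``fixed probability measure supported near $0$'' is not tied to the dynamics at all, so it cannot deliver this positivity, and you would need to show that the degree $m_k$ grows at rate $d^{\lvert\un_k\rvert}$ to compensate for the lost factor, which is a nontrivial separate claim you did not address.
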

 This condition extracts the core of the ones previously used, such as having a uniform expansion along the postcritical set, the famous Misiurewicz condition. To illustrate the interest of this condition, we show in \S\ref{sec:Mis} that this condition is satisfied by Misiurewicz maps. The proof drastically simplifies the proof of~\cite[Theorem~1.4]{Article1}. In particular, we now are able to avoid any linearization process along the critical orbits which was a crucial step in the proof given by the second author (see~\cite[\S 5]{Article1}). Indeed, we only use here the transversality and the holomorphic motion of the hyperbolic set containing the post-critical set.
The proof of Theorem~\ref{tm:suppTm} is based on a phase-parameter transfer (giving a measurable version of Tan Lei's work \cite{Tan}). For clarity, we give the proof in the easier case of one critical point in Section~\ref{toy-model}.

~

A rational map is \emph{strongly Misiurewicz} if all its critical points are preperiodic to repelling cycles. Reformulating the main result of Aspenberg~\cite{Asp13} in terms of the conditions CE($\gamma,\gamma_0$), CE2($\mu,\mu_0$), BA($\alpha$) and  FA($\eta, \iota$) (defined at the beginning of Section~\ref{sec:distortion}) gives the following.

\begin{Theorem}\label{tm:abundance}
	Assume that $f\in\rat_d$ is strongly Misiurewicz, has simple critical points and is not a flexible Latt\`es map. 
	Then, there exist $\mu,\mu_0,\gamma,\gamma_0>0$ and $\hat{\alpha}>0$ such that for all $\alpha < \min(\frac{\gamma}{200}, \hat \alpha)$, 
	there exist $\hat{\eta}>0$ and $\hat{\iota}>0$ such that for all $\eta < \hat \eta$ and for all
	$\iota < \hat \iota$, the map $f$ is a Lebesgue density point of rational maps satisfying
	CE($\gamma,\gamma_0$), CE2($\mu,\mu_0$), BA($\alpha$) and  FA($\eta, \iota$).
\end{Theorem}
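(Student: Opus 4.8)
The plan is to deduce this from the main theorem of Aspenberg~\cite{Asp13}. The four conditions CE($\gamma,\gamma_0$), CE2($\mu,\mu_0$), BA($\alpha$) and FA($\eta,\iota$) are precisely the quantitative hypotheses that organize Aspenberg's inductive parameter exclusion (possibly appearing there under different names or in a slightly different packaging), so the work consists in reproducing the structure of that argument and keeping track of the order in which the constants are chosen: $\gamma,\gamma_0,\mu,\mu_0$ fixed by the expansion of $f$ along its post-critical set, then $\alpha$ free below $\min(\gamma/200,\hat\alpha)$, then $\eta$ and $\iota$ free below thresholds depending on all the previous constants. Since the definitions of the four conditions are postponed in the excerpt to Section~\ref{sec:distortion}, I will describe the mechanism rather than manipulate them directly.

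First I would fix $f=f_0$, use the natural affine coordinates on $\rat_d$ near $f_0$, and follow the $2d-2$ simple critical points holomorphically as $c_1(\lambda),\dots,c_{2d-2}(\lambda)$. Two analytic inputs are needed. \emph{(i) Transversality at $f_0$.} Because $f_0$ is strongly Misiurewicz, each critical orbit lands after finitely many steps on a repelling cycle, and its post-critical set lies in a hyperbolic set which moves holomorphically on a neighborhood of $f_0$; writing $\beta_i(\lambda)$ for the difference between $f_\lambda^{k_i}(c_i(\lambda))$ and the holomorphic continuation of the point where $c_i$ lands, the map $\lambda\mapsto(\beta_1(\lambda),\dots,\beta_{2d-2}(\lambda))$ vanishes at $\lambda_0$ and I would use that it is a local submersion onto $\C^{2d-2}$ there — this is exactly the point where the hypothesis that $f_0$ is \emph{not} a flexible Latt\`es map enters, flexible Latt\`es maps being the sole obstruction since for them the Misiurewicz relations persist along a one-parameter family. \emph{(ii) Phase--parameter comparison.} Combining transversality with the uniform expansion of $Df_0$ along its post-critical set (a hyperbolic set, at definite distance from $\mathcal{C}(f_0)$ by the Misiurewicz property), I would derive, for $\lambda$ near $\lambda_0$ and as long as the orbit stays in a fixed neighborhood of the post-critical set, a two-sided comparison between the parameter derivative $\partial_\lambda\big(f_\lambda^{n}(c_i(\lambda))\big)$ and the dynamical derivative $(f_\lambda^{n})'(f_\lambda(c_i(\lambda)))$ up to multiplicative constants — a measurable form of Tan Lei's similarity between the parameter and dynamical planes~\cite{Tan}.

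The core is then the Benedicks--Carleson / Jakobson / Tsujii type inductive exclusion, run simultaneously on all $2d-2$ critical points, of which the rational-map version is due to Aspenberg (with~\cite{Rees} as a precursor). I would partition the parameter ball $B(\lambda_0,r)$ according to the combinatorics of the \emph{returns} of the critical orbits to $\mathcal{C}(f_\lambda)$: a return at time $n$ is a time where $f_\lambda^{n}(c_i(\lambda))$ is at distance of order $e^{-\alpha n}$ from $\mathcal{C}(f_\lambda)$ — keeping this under control along the surviving parameters is exactly BA($\alpha$). Each return is followed by a binding period of length $O(\log(1/\text{distance}))$ during which the critical orbit shadows another critical orbit, and FA($\eta,\iota$) encodes the admissible structure of the intervening free periods and the accumulated return depths. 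Using the phase--parameter comparison I would transfer the distortion estimates along critical orbits to the parameter side, which keeps the partition consistent, and at each return I would delete the sub-proportion of parameters that would spoil the target exponential lower bounds; the surviving set then satisfies CE($\gamma,\gamma_0$) forward along the critical orbits and, by the analogous control of preimages of critical points, CE2($\mu,\mu_0$) backward. The quantitative heart is the measure bound: the relative measure deleted at a return of depth $\ell$ is $\lesssim e^{-c\ell}$, the number of admissible itineraries grows at a controlled rate, and — since, by input (ii), the critical orbits of $f_\lambda$ remain pinned near the post-critical set of $f_0$ for times up to $\gtrsim|\log r|$, so that no return can occur before such a time and the first one has depth $\gtrsim\alpha|\log r|$ — the total deleted proportion of $B(\lambda_0,r)$ is $\lesssim r^{c\alpha}$, which tends to $0$ as $r\to0$. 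This is precisely the statement that $f_0$ is a Lebesgue density point of the set of maps satisfying all four conditions.

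The main obstacle is that implementing this carries the full weight of the Benedicks--Carleson machinery: controlling distortion along arbitrarily long critical-orbit segments, defining and estimating binding periods, bounding the number of admissible itineraries, and — in the rational setting — doing all of this with $2d-2$ critical points whose orbits may interact, together with the rigidity input ruling out flexible Latt\`es maps. This is exactly what is carried out in~\cite{Asp13}, so in practice the proof reduces to checking that Aspenberg's conclusions are stated in, or immediately yield, the form CE($\gamma,\gamma_0$) + CE2($\mu,\mu_0$) + BA($\alpha$) + FA($\eta,\iota$) with the stated order of dependence among the constants.
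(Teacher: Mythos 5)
Your high-level plan --- treat Aspenberg's main theorem as a black box and reformulate it --- is indeed the paper's route. The issues are with the two places where you treat the translation as automatic.

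First, you write that the surviving parameters satisfy ``CE($\gamma,\gamma_0$) forward along the critical orbits and, by the analogous control of preimages of critical points, CE2($\mu,\mu_0$) backward.'' That is not where CE2 comes from. Aspenberg's parameter-exclusion argument controls \emph{forward} critical orbits only; it says nothing about the preimages of $\mathcal{C}(f)$. The paper obtains CE2 from a different source: a theorem of Graczyk--Smirnov asserting that every Collet--Eckmann rational map with simple critical points satisfies the backward Collet--Eckmann condition, and then a uniformity argument (Proposition~\ref{propCE2}) that re-inspects the constants in their proof to get $\mu,\mu_0$ valid on an entire parameter neighborhood of $f_0$ and depending only on $\gamma,\gamma_0$. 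This is genuinely a separate ingredient, not a by-product of the exclusion.

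Second, you dismiss the discrepancy between Aspenberg's conditions and the paper's as ``possibly appearing there under different names or in a slightly different packaging,'' and conclude that the proof ``reduces to checking'' a compatibility of statements. Aspenberg's output is actually in terms of BA$'$ (distance to critical \emph{points}, not derivative) and FA$'$ (a density condition on free periods between deep returns, formulated via bound periods $p_i$), neither of which literally is BA or FA. The BA$'\Rightarrow$BA step is cheap (condition (K2)), but FA$'\Rightarrow$FA is Lemma~\ref{lemFA}, which is a substantial argument: one must show the distortion scale $k(f^\nu(v'))$ is dominated by the bound period $p$, control distortion on a telescoping family of pullback disks, and convert the bound-period length estimate $p\lesssim -\log|f'(z)|/(\gamma+\beta)$ into the summed logarithmic bound in FA. The constants $\hat\eta=\hat\delta^2$ and $\hat\iota=\tau\log\kappa\cdot\max(1,3\tfrac{\beta+\log\kappa}{\gamma+\beta}-1)$ drop out of that computation, not of a renaming. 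You should either carry out that translation or at least record it as a lemma to be proved, rather than folding it into ``checking the conclusions.''

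Your description of the Benedicks--Carleson/Tsujii exclusion and of the transversality input (and the role of the flexible Latt\`es hypothesis) is accurate, but since you already invoke Aspenberg's theorem as given, that paragraph re-proves the black box rather than filling the two actual gaps above.
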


The last key ingredient to prove Theorem~\ref{tm:leb} can be formulated as follows (conditions (K1-6) are defined in Section~\ref{sec51}).
\begin{Theorem}\label{tm:bigscale}
Let $\gamma, \gamma_0, \mu, \mu_0,\eta,\kappa>0$ and $\alpha<\gamma/200$.
There exists $\iota>0$ such that any $f\in\rat_d$ with simple critical points and
satisfying CE($\gamma,\gamma_0$), CE2($\mu,\mu_0$), BA($\alpha$),  FA($\eta, \iota$) and (K1-6)
 satisfies the large scale condition.
\end{Theorem}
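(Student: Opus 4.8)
The plan is to produce the data $(\Lambda_f,\un_k,\Omega_k,\delta)$ required by the large scale condition by hand, and to verify the ``vertical-like graph'' property by transporting a Koebe-type distortion estimate from the dynamical plane to parameter space. Since $f$ has $2d-2$ simple critical points, on a neighbourhood $V$ of $f$ in $\rat_d$ these are followed holomorphically by $c_1,\dots,c_{2d-2}\colon V\to\P^1$, and the critical value map $\chi\colon g\mapsto\big(g(c_1(g)),\dots,g(c_{2d-2}(g))\big)$ is submersive at $f$ (a standard fact for rational maps with distinct simple critical values). I would take $\Lambda_f\subset\rat_d$ to be a local holomorphic section of $\chi$ through $f$ --- a $(2d-2)$-dimensional submanifold on which the critical values form holomorphic coordinates $\lambda=(\lambda_1,\dots,\lambda_{2d-2})$ vanishing at $f$, so that $g_\lambda(c_j(g_\lambda))=v_j+\lambda_j$ in a fixed chart, where $v_j:=f(c_j(f))$ --- chosen moreover so that the phase-parameter matrix below is non-degenerate (this is exactly where the transversality-type conditions among (K1-6) are used; cf.~\cite{buffepstein}).

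In these coordinates the restriction $\mathfrak{V}_{\un}|_{\Lambda_f}$ reads $\Phi_{\un}(\lambda)=\big(g_\lambda^{\,n_1-1}(v_1+\lambda_1),\dots,g_\lambda^{\,n_{2d-2}-1}(v_{2d-2}+\lambda_{2d-2})\big)$. The analytic core is to establish, via the distortion estimates of Section~\ref{sec:distortion} (the Collet-Eckmann bound/free period decomposition, in the spirit of Aspenberg~\cite{Asp13}), two facts, with all constants depending only on $\gamma,\gamma_0,\mu,\mu_0,\alpha,\eta$ and, once $\iota$ has been fixed small enough, on $\iota$. First: there is a fixed $\delta_0>0$ such that for every $j$ and every $n\geq1$ the iterate $f^{\,n-1}$ is univalent with uniformly bounded distortion on the disc $\D\big(v_j,\,\delta_0\,|(f^{\,n-1})'(v_j)|^{-1}\big)$; here BA$(\alpha)$ controls the proximity of the critical orbit to $\mathcal{C}(f)$, FA$(\eta,\iota)$ the sparseness and total cost of the deep returns, and CE$(\gamma,\gamma_0)$, CE2$(\mu,\mu_0)$ the expansion recovered after a bound period. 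Second: a phase-parameter comparison $\partial_{\lambda_i}\big[g_\lambda^{\,n-1}(v_j+\lambda_j)\big]\big|_{\lambda=0}=(f^{\,n-1})'(v_j)\,\big(\delta_{ij}+\eps_{ij}(n)\big)$, where each $\eps_{ij}(n)$ is a partial sum of a series whose $m$-th term is a bounded quantity divided by $(f^{\,m})'(v_j)$, hence convergent geometrically by CE$(\gamma,\gamma_0)$ (the critical orbits lying in the compact $\P^1$), and whose limiting matrix $\id+(\eps_{ij}(\infty))_{i,j}$ is invertible by the choice of $\Lambda_f$; consequently $\id+(\eps_{ij}(n))_{i,j}$ is uniformly invertible for all large $n$. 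Together these say that, near $\lambda=0$ and up to a bounded linear change of coordinates, $\Phi_{\un_k}$ is a diagonal dilation whose $j$-th factor expands by $\asymp|(f^{\,n_{k,j}-1})'(v_j)|\geq e^{(n_{k,j}-1)\gamma-\gamma_0}$.

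Granting the two facts, I would choose the scales as follows. Using (K1-6), pick $\un_k=(n_{k,1},\dots,n_{k,2d-2})$ with $n_{k,j}\to+\infty$ for all $j$ and along which both facts hold with uniform constants. Set $r_{k,j}:=c\,\delta_0\,|(f^{\,n_{k,j}-1})'(v_j)|^{-1}$ and $P_k:=\prod_j\D(0,r_{k,j})\subset\Lambda_f$; for a small fixed $c>0$ --- chosen so that on $P_k$ the distortion of $\Phi_{\un_k}$ is close to $1$ (Koebe applied to the first fact) and $\Phi_{\un_k}$ is injective --- the two facts show that $\Phi_{\un_k}$ restricts on $P_k$ to a biholomorphism onto its image, and that this image contains the polydisc $Q_k^{(2\delta)}:=\prod_j\D\big(f^{\,n_{k,j}}(c_j(f)),2\delta\big)$ for some fixed $\delta>0$ depending only on the constants. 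Let $\Omega_k$ be the connected component through $\lambda=0$ of $\Phi_{\un_k}^{-1}\big(Q_k^{(2\delta)}\big)\cap P_k$; then $\Omega_k\subset P_k$, so $\diam\Omega_k\to0$ and $\{\Omega_k\}_k$ is a basis of neighbourhoods of $f$ in $\Lambda_f$, while $\Phi_{\un_k}\colon\Omega_k\to Q_k^{(2\delta)}$ is proper. Put $Q_k^{(\delta)}:=\prod_j\D\big(f^{\,n_{k,j}}(c_j(f)),\delta\big)$ and let $\Omega'_k$ be the connected component through $\lambda=0$ of $\Phi_{\un_k}^{-1}\big(Q_k^{(\delta)}\big)$ inside $P_k$; since $\overline{Q_k^{(\delta)}}\subset Q_k^{(2\delta)}$ and $\Phi_{\un_k}$ is a biholomorphism on $P_k$, one gets $\Omega'_k\Subset\Omega_k$. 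Finally, because $\lambda\mapsto(\lambda,\Phi_{\un_k}(\lambda))$ is a homeomorphism onto the graph of $\mathfrak{V}_{\un_k}|_{\Lambda_f}$, the connected component of $\Gamma_{\un_k}\cap\big(\Omega_k\times Q_k^{(\delta)}\big)$ containing $(f,\mathfrak{V}_{\un_k}(f))$ is exactly the graph of $\Phi_{\un_k}$ over $\Omega'_k$, hence is contained in $\Omega'_k\times Q_k^{(\delta)}$. This is precisely the large scale condition.

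The main obstacle is the first fact: controlling the distortion of $f^{\,n-1}$ on discs about $v_j$ at the critical scale $\asymp|(f^{\,n-1})'(v_j)|^{-1}$, uniformly in $n$, while the critical orbit is recurrent. At a deep return of $f^{\,m}(v_j)$ to $\mathcal{C}(f)$ the map is locally quadratic, Koebe fails, and one must run a bound-period argument --- bind the return to the orbit of the nearest critical point for a controlled number of iterates, then recover expansion using CE$(\gamma,\gamma_0)$ and CE2$(\mu,\mu_0)$ --- and then prove that the distortions contributed by all returns up to time $n-1$ sum to a bounded quantity. This is precisely where the hypothesis $\alpha<\gamma/200$ is used and where FA$(\eta,\iota)$ enters to bound the total cost of the returns; the admissible value of $\iota$ in the statement is fixed by the convergence of that sum. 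A secondary difficulty is the \emph{joint} nature of the conclusion: making the single map $\Phi_{\un_k}$ a biholomorphism (rather than treating its $2d-2$ coordinates one at a time) forces the matrix $\id+(\eps_{ij}(n_{k,j}))_{i,j}$ to be uniformly non-degenerate, which is the role of (K1-6) and ultimately rests on transversality of the critical orbit relations.
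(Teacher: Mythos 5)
Your high-level plan — construct $\Lambda_f$, get dynamical distortion at critical scales about each $v_j$, transfer to parameter space via a phase-parameter derivative comparison, and build $\Omega_k\Subset\Omega'_k$ by preimage — matches the paper's architecture, but there are several genuine gaps, two of which are structural.

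The most serious is the passage from one-dimensional to full-dimensional parametric distortion. Your ``first fact'' is a statement in the dynamical plane, and your ``second fact'' is a statement about first-order derivatives at $\lambda=0$. From these you jump to the claim that $\Phi_{\un_k}$ is a biholomorphism with small distortion on the full $(2d-2)$-dimensional polydisk $P_k$. Koebe-type distortion control is a one-complex-variable phenomenon: what one can actually establish by iterating the dynamical estimates through the parameter dependence (the paper's Lemma~\ref{lemParDist}) is distortion control for the restrictions $\xi_n^i|_{\D(0,r_{n,i})\cdot u}$ to one-dimensional slices, for $u$ ranging over a positive-measure set $E$ of directions. Inflating that to a bound on a full neighbourhood of $0$ in $\Lambda_f$ requires a separate tool — the paper uses the Sibony--Wong theorem (Theorem~\ref{tm:sibonywong}) precisely to extend boundedness from a positive-measure family of lines to a ball, followed by a normality and invertibility argument in Theorem~\ref{theo_largescale}. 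Your proposal omits this step entirely, and without it the assertion ``$\Phi_{\un_k}$ restricts on $P_k$ to a biholomorphism onto its image'' is unsupported.

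Second, you attribute the non-degeneracy of the phase-parameter matrix $\id+(\eps_{ij})$ to the choice of $\Lambda_f$ and to (K1-6). Neither is the source. The conditions (K1-6) are purely local regularity/geometry bounds on $F$ and $\mathcal{C}(f_\lambda)$; they say nothing about linear independence of the $\tau_i$. The transversality is a genuine theorem (property (T) in the paper, drawn from Theorem~B of~\cite{Astorg}), and it is available for Collet--Eckmann maps only because such maps carry no invariant line field on the Julia set unless they are flexible Latt\`es (Lemma~\ref{lm:noinvlinfield}, which in turn invokes~\cite{Prz}). One cannot arrange this non-degeneracy by selecting $\Lambda_f$; one can only select $\Lambda_f$ generically so as not to destroy an independence that must already hold on $T_f\rat_d$.

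Two smaller points. Your ``first fact'' as stated — uniform distortion on $\D(v_j,\delta_0|(f^{n-1})'(v_j)|^{-1})$ for \emph{every} $n$ — is false: CE maps have bad returns where the natural good scale $a^+(v_j,n;f)$ is far smaller than $|(f^n)'(v_j)|^{-1}$. What is true, and what the paper's Lemma~\ref{lemAplus} proves, is that this holds on a set $G_j\subset(m,2m]$ of density close to $1$; you need a density argument (the paper requires $\theta>1-\frac{1}{2d-2}$) to make $\bigcap_j G_j$ nonempty, and it is \emph{this} density constraint — not convergence of a distortion sum — that fixes the admissible value of $\iota$ in the statement. Your proposal silently elides both the density estimate and the intersection step.
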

To prove Theorem~\ref{tm:bigscale}, we follow Tsujii's generalization \cite{Tsu} of Benedicks and Carleson construction \cite{BenCar} that we need to adapt to the complex setting. The strategy of the proof is summarized below.
\renewcommand{\labelitemi}{•}
\def\crit{\mathcal{C}}
\begin{itemize}
	\item Take a Collet-Eckmann map $f$ and a small ball in the dynamical plane centered at a critical value. That ball will go to the large scale with exponential growth and good distortion estimates under $f^n$ by (CE) as long as its orbit stays far away from the critical set $\crit(f)$. For each $n$ we will choose such a starting ball $B_n$. 
	\item Passages of the critical orbit near $\crit(f)$ impose upper bounds of the size of $B_n$. The assertion (BA) gives lower bounds for the approach rate to $\crit(f)$. After a close visit near $\crit(f)$, the image is even closer to the visited critical value. \rlem{Kx} shows that the sequel of the critical orbit copies the good properties of a long prefix of the orbit of the visited critical value.
	\item Lemma~\ref{lemBkDil} guarantees that just before going (again) near a critical point, we gain expansivity by (CE2), so we have restored the exponential growth and the bound for the distortion on $B_n$. We use a Lemma à la M\~ané (Proposition~\ref{propMane}) for the suffix of a (finite) critical orbit that does not visit a neighborhood of $\crit(f)$.
	 \item Lemma~\ref{lemAplus} gives a large density of times $n$ for which $B_n$ goes to the large scale ((FA) tells that the critical orbit does not go too often near the critical set). We intersect over all the critical values and still have positive density of times for which the starting balls go to the large scale for all the critical values.
	\item Using Lemma~\ref{lemParDist} allows to bound parametric distortion on complex lines passing through $f$ thanks to the transversality of critical relations (using again (CE) and (BA)).
	\item Finally, we use a result of Sibony and Wong \cite{SW} and the transversality to extend the distortion to a ball on a neighborhood of $f$ to get the large scale condition (Theorem~\ref{theo_largescale}).
\end{itemize}

\paragraph*{Structure of the paper}
In Section~\ref{sec:currents}, we recall facts on bifurcation currents. In Section~\ref{sec:TM}, we define a (generalized) large scale condition and show that if a parameter satisfies it for some $m$ then it is in the support of $T_\bif^m$. In Section~\ref{sec:Mis}, we apply this result to $k$-Misiurewicz maps. In Section~\ref{sec:distortion}, we prove Theorems \ref{tm:abundance} and \ref{tm:bigscale}. In Section~\ref{sec:mainresults}, we prove Theorem~\ref{tm:leb} and a strengthened version of Corollary~\ref{tm:approxhyperbolic}.

\section{Basics on bifurcation currents}\label{sec:currents}
 This section is devoted to the bifurcation currents. We begin with giving a description of the bifurcation currents. We then  give a new formula for the higher bifurcation currents.

\subsection{The bifurcation currents of critical points}\label{sec:bifcur}
Let $(f_\lambda)_{\lambda\in\Lambda}$ be a holomorphic family of rational maps equipped with $2d-2$ marked critical points $c_1,\ldots,c_{2d-2}:\Lambda\to\P^1$.
\begin{definition}
A critical point $c$ is said to be \emph{marked} if there exists a holomorphic function $c:\Lambda\longrightarrow\P^1$ satisfying $f_\lambda'(c(\lambda))=0$ for every $\lambda\in \Lambda$.
\par We say that the critical point $c$ is \emph{active} at $\lambda_0\in \Lambda$ if $(f_\lambda^n(c(\lambda)))_{n\geq0}$ is not a normal family in any neighborhood of $\lambda_0$. Otherwise we say that $c$ is \emph{passive} at $\lambda_0$. The \emph{activity locus} of $c$ is the set of parameters $\lambda\in \Lambda$ at which $c$ is active.\label{actif}
\end{definition}

Let us construct the bifurcation current of the critical point $c_i$. One can define a fibered dynamical system $\hat f$ acting on $\Lambda\times\P^1$
\begin{eqnarray*}
\hat f:\Lambda\times\P^1 & \longrightarrow & \Lambda\times\P^1\\
(\lambda,z) & \longmapsto & (\lambda,f_\lambda(z))~.
\end{eqnarray*}
We denote by $p_\Lambda:\Lambda\times\P^1\to\Lambda$ and $p_{\P^1}:\Lambda\times\P^1\to\Lambda$ the respective natural projections and by $\hat\omega:=(p_{\P^1})^*\omega_{\textup{FS}}$, where $\omega_{\textup{FS}}$ is the Fubini-Study form on $\P^1$ normalized so that $\int_{\P^1}\omega_{\textup{FS}}=1$.
 We say that a function $\psi$ is $\hat\omega$-psh if it can be locally written as the sum of a smooth function and a plurisubharmonic function (psh for short) and $dd^c \psi+ \hat\omega \geq 0$ in the sense of currents. Then, there exists a $\hat\omega$-psh function $g$ such that 
\begin{eqnarray}
(\hat f)^*\left(\hat\omega+dd^cg\right)=d\cdot \left(\hat\omega+dd^cg\right).\label{eq:pullback}
\end{eqnarray}
Indeed, since $d^{-1}(\hat f)^*\hat\omega$ and $\hat\omega$ are in the same cohomology class, there exists a smooth $\hat\omega$-psh function $u$ such that $d^{-1}(\hat f)^*\hat\omega=\hat\omega+dd^cu$. Taking
$$u_n:=\sum_{j=0}^{n-1}\frac{u\circ (\hat f)^j}{d^j}~,$$
we defined $g:=\lim_nu_n$. The function $g$ is continuous and $\hat\omega$-plurisubharmonic on $\Lambda\times\P^1$, since $\|g-u_n\|_\infty=O(d^{-n})$. The function $g$ is the \emph{Green function} of $\hat f$ and is unique up to an additive constant. We shall use the following notation in the sequel
\[\widehat{T}:=\hat\omega+dd^cg~.\]

One can give the following definition.

\begin{definition}
The \emph{bifurcation current} of the critical point $c_i$ in $(f_\lambda)_{\lambda\in\Lambda}$ is
$$T_i:= (p_\Lambda)_*\left((\hat\omega +dd^cg)\wedge [\hat V_i]\right)~,$$
where $\hat V_i=\{(\lambda,v_i(\lambda))\ : \lambda\in\Lambda\}$ is the graph of the map $v_i(\lambda):=f_\lambda(c_i(\lambda))$.\label{def:Tc}
\end{definition}

The holomorphic family $(f_\lambda)_{\lambda\in\Lambda}$ admits local lifts, i.e. for any small enough $V\subset\Lambda$, there exists a holomorphic family $(F_\lambda)_{\lambda\in V}$ of non-degenerate homogeneous degree $d$ polynomial endomorphims of $\C^2$, and the \emph{Green function} of the lift is then
$$G(\lambda,x,y):=\lim_{n\to\infty}d^{-n}\log\|F_\lambda^n(x,y)\|~, \ (x,y)\in\C^2\setminus\{0\}~.$$
It actually is a continuous and psh function on $V\times(\C^2\setminus\{0\})$. According to \cite[Section 3.2.2]{bsurvey}, one can prove that for any local holomorphic section $\sigma:U\subset\P^1\to\C^2\setminus\{0\}$ of the canonical projection $\C^2\setminus\{0\}\to\P^1$, then
\begin{center}
$\hat\omega+dd^cg=dd^cG(\lambda,\sigma(z))~, \ \text{on} \ V\times U.$
\end{center}
We thus can locally write $T_i=dd^cG(\lambda,\sigma\circ v_i(\lambda))=d\cdot dd^cG(\lambda,\sigma\circ c_i(\lambda))$ on $U$.

\begin{remark} \normalfont
The definition we give here is not the classical one. The usual definition of the bifurcation currents is to take locally $\tilde T_i:=dd^cG(\lambda,\sigma\circ c_i(\lambda))=d^{-1}\cdot T_i$, which does not change the support of the current.
\end{remark}
\noindent The important information concerning the current $T_i$ is the following (see \cite{favredujardin}).

\begin{proposition}[Dujardin-Favre]
The support of $T_i$ is the activity locus of $c_i$.
\end{proposition}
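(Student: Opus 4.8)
The plan is to establish the two inclusions separately, using the local description $T_i = dd^c G(\lambda, \sigma\circ v_i(\lambda))$ on a chart $U$, together with the classical characterization of normality via boundedness of the Green function along the critical orbit. The key observation is that $g_n(\lambda) := d^{-n}\log\|F_\lambda^n(\sigma(v_i(\lambda)))\|$ is a decreasing (up to a uniformly bounded error) sequence of psh functions converging locally uniformly to $G(\lambda,\sigma\circ v_i(\lambda))$; the partial sums $d^{-n}\log\|F_\lambda^n(\cdot)\|$ encode the behaviour of the orbit $(f_\lambda^n(v_i(\lambda)))_n$.

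First I would prove that the complement of the activity locus is contained in the complement of $\supp T_i$. Suppose $c_i$ is passive near $\lambda_0$, so $(f_\lambda^n(c_i(\lambda)))_{n\geq 0}$ is normal on some neighborhood $W\ni\lambda_0$. Normality implies that the orbit avoids some open set, or more precisely that after composing with a suitable local section one gets a locally uniformly bounded family; this forces the functions $\lambda\mapsto d^{-n}\log\|F_\lambda^n(\sigma(v_i(\lambda)))\|$ to converge locally uniformly to a \emph{pluriharmonic} limit on $W$ (the Green function is pluriharmonic precisely where the dynamics is locally uniformly away from the critical values, via the equidistribution of preimages and normality). Hence $T_i = dd^c G(\lambda,\sigma\circ v_i(\lambda)) = 0$ on $W$, so $\lambda_0\notin\supp T_i$.

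Conversely, I would show that if $c_i$ is active at $\lambda_0$ then $\lambda_0\in\supp T_i$. Assume for contradiction that $T_i = 0$ on a neighborhood $W$ of $\lambda_0$, i.e. $\lambda\mapsto G(\lambda,\sigma\circ v_i(\lambda))$ is pluriharmonic on $W$. Pluriharmonicity combined with the fact that $G$ is the uniform limit of $d^{-n}\log\|F_\lambda^n\circ\sigma\circ v_i\|$ lets one run a Montel-type argument: one shows that the family $\{f_\lambda^n(c_i(\lambda))\}_n$ omits enough values to be normal on $W$, contradicting activity at $\lambda_0$. Concretely, pluriharmonicity of the limit together with the Hartogs-type convergence allows one to correct $d^{-n}\log\|F_\lambda^n\circ\sigma\circ v_i\| - (\text{pluriharmonic})$ to obtain uniform control, from which normality of $(f_\lambda^n(c_i(\lambda)))_n$ follows by the standard criterion (a sequence of holomorphic maps into $\P^1$ with Green-type potentials converging to a pluriharmonic function is normal).

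The main obstacle is the second inclusion: passing from ``$G\circ v_i$ pluriharmonic on $W$'' back to ``the critical orbit is a normal family on $W$''. This is exactly the delicate point in Dujardin–Favre: it requires relating the vanishing of the fibered bifurcation current to the dynamical non-bifurcation of the single critical point, and the cleanest route is to invoke the equivalence (due to DeMarco and made precise in \cite{favredujardin}) between activity of $c_i$, non-normality of $(f^n_\lambda(c_i(\lambda)))_n$, and the positivity of $dd^c\big(G(\lambda,\sigma\circ v_i(\lambda))\big)$; alternatively one argues directly that a pluriharmonic limit of the normalized iterate-potentials forces the iterates $f^n_\lambda(v_i(\lambda))$ to stay in a fixed hyperbolic (omitting three points) subset of $\P^1$ along $W$, yielding normality by Montel. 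I would present the argument by citing \cite{favredujardin} for the core equivalence and filling in the two implications with the potential-theoretic bookkeeping above.
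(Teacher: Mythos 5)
This proposition is stated in the paper without proof: it is taken directly from Dujardin--Favre (the ``see \cite{favredujardin}'' preceding the statement), so there is no internal argument to compare against. Your plan therefore cannot match or diverge from ``the paper's own proof'' --- there is none. What can be assessed is whether your sketch is a sound independent argument, and it is not: you end your outline by saying you ``would present the argument by citing \cite{favredujardin} for the core equivalence,'' which means the hard direction (vanishing of $T_i$ near $\lambda_0$ implies normality of $(f_\lambda^n(c_i(\lambda)))_n$) is precisely delegated back to the reference you are meant to reprove. As a standalone proof proposal, that is circular.

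On the content of the sketch itself, two remarks. In the easy direction, ``normality implies that the orbit avoids some open set'' is not correct --- normality means local precompactness, not omission of values --- and the subsequent appeal to ``equidistribution of preimages'' to get pluriharmonicity is not the mechanism at work. The cleaner and standard argument here uses the description $T_i = \lim_n d^{-n}(\xi_n^i)^*\omega_{\textup{FS}}$ (stated in \S\ref{sec:bifcur}): if the family $(\xi_n^i)_n$ is normal on a ball $W$, then by Cauchy estimates the forms $(\xi_n^i)^*\omega_{\textup{FS}}$ are locally uniformly bounded, hence $d^{-n}(\xi_n^i)^*\omega_{\textup{FS}}\to 0$ on $W$ and $T_i|_W=0$. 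No potential-theoretic bookkeeping is needed. In the hard direction, you correctly identify the crux --- deducing normality from $dd^c\big(G(\lambda,\sigma\circ v_i(\lambda))\big)=0$ --- but the phrase ``pluriharmonicity forces the iterates to stay in a fixed hyperbolic subset'' is a claim, not an argument; Dujardin--Favre's actual proof of this implication is the nontrivial part and cannot be compressed into a one-line Montel invocation. Since the paper simply cites the result, citing \cite{favredujardin} is of course acceptable; but then the first three paragraphs of your proposal are superfluous rather than a proof.
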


Another way to characterize the bifurcation current of $c_i$ is the following. Let $\xi_{n}^{i}:\Lambda\to\P^1$ be the map given by $\xi_{n}^{i}(\lambda):=f_\lambda^n(v_i(\lambda))$, for $n\geq0$ and $1\leq i\leq 2d-2$. The sequence of forms $d^{-n}(\xi_{n}^{i})^*\omega_{\textup{FS}}$ converge to the current $T_i$ in the sense of currents (see e.g. \cite{higher}).

\subsection{Bifurcation currents of a holomorphic family}
Recall that $f\in\rat_d$ admits a unique maximal entropy measure $\mu_f$. The \emph{Lyapounov exponent} of $f$ with respect to the measure $\mu_f$ is the real number $L(f)\pe\int_{\P^1}\log|f'|\mu_f$. For a holomorphic family $(f_\lambda)_{\lambda\in \Lambda}$ of degree $d$ rational maps, we denote by $L(\lambda)\textup{:=} L(f_\lambda)$. Then, the function $\lambda \longmapsto L(\lambda)$ is called the \emph{Lyapounov function} of the family $(f_\lambda)_{\lambda\in \Lambda}$. It is a psh and continuous function on $\Lambda$ (see \cite{BB1} Corollary 3.4). The Margulis-Ruelle inequality implies that $L(f)\geq\frac{\log d}{2}$.

\par When $(f_\lambda)_{\lambda\in \Lambda}$ is with $2d-2$ distinct marked critical points $c_1,\ldots,c_{2d-2}$, the bifurcation locus in the sense of Ma\~n\'e-Sad-Sullivan and Lyubich (see \cite{lyubich-stable,MSS}) coincides with the union of the activity loci of the $c_i$'s. According to DeMarco \cite{DeMarco1}, we have the following.

\begin{theorem}[DeMarco]
Let $(f_\lambda)_{\lambda\in\Lambda}$ be a holomorphic family of degree $d$ rational maps with $2d-2$ distinct marked critical points. Then
the current $T_\bif:=dd^cL$ is exactly supported by the bifurcation locus. Moreover, $T_\bif=d^{-1}\sum_{i=1}^{2d-2}T_i$.
\end{theorem}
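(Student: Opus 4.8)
The plan is to split the statement into two parts proved by completely different means: the \emph{identity} $T_\bif=d^{-1}\sum_{i=1}^{2d-2}T_i$, which comes from DeMarco's formula expressing the Lyapunov exponent through the homogeneous Green function evaluated along the critical points, and the \emph{support} statement, which is then a formal consequence of positivity of the $T_i$ together with the Dujardin--Favre description of $\supp T_i$ and the Ma\~n\'e--Sad--Sullivan/Lyubich characterization of the bifurcation locus.

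For the identity, fix $\lambda_0\in\Lambda$ and a small neighbourhood $V\ni\lambda_0$ over which the family admits a holomorphic lift $(F_\lambda)_{\lambda\in V}$ to non-degenerate homogeneous degree $d$ endomorphisms of $\C^2$, with homogeneous Green function $G(\lambda,\cdot)$, and over which each marked critical point lifts to a holomorphic section $\hat c_i\colon V\to\C^2\setminus\{0\}$. The core input is DeMarco's identity
\[
L(\lambda)\;=\;\sum_{i=1}^{2d-2} G\big(\lambda,\hat c_i(\lambda)\big)\;+\;h(\lambda),\qquad \lambda\in V,
\]
where $h$ is pluriharmonic on $V$: it is built from the constant $\log d$, a fixed multiple of $\log|\mathrm{Res}(F_\lambda)|$ (pluriharmonic since $F_\lambda$ is non-degenerate and $\mathrm{Res}$ is polynomial in the coefficients, hence holomorphic and non-vanishing in $\lambda$), and $\log$'s of holomorphic non-vanishing functions recording the choices of lifts. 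To reprove this rather than cite \cite{DeMarco1}, one expresses $\log|f_\lambda'|$ on $\P^1$ through the homogeneous data: $\mathrm{Jac}\,F_\lambda$ is homogeneous of degree $2(d-1)$ with zero divisor $\sum_i[c_i]$ in $\P^1$, so $\log|\mathrm{Jac}\,F_\lambda\circ\sigma|$ recovers $\log|f_\lambda'|$ up to the conformal factors $\log\|\sigma\|$ and $\log\|F_\lambda\circ\sigma\|$; integrating against $\mu_{f_\lambda}$ (the restriction of $\widehat T$ to the fibre $\{\lambda\}\times\P^1$) and using $f_\lambda$-invariance of $\mu_{f_\lambda}$ together with the functional equation $G(\lambda,F_\lambda(\cdot))=d\,G(\lambda,\cdot)$ to telescope the $\log\|\cdot\|$ contributions, the boundary term of the telescoping combined with the factorization of $\mathrm{Jac}\,F_\lambda$ produces $\sum_i G(\lambda,\hat c_i(\lambda))$, and the remainder assembles into the pluriharmonic $h$.

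Granting this, apply $dd^c$ in the parameter. Then $dd^c h=0$, and by the relation $T_i=d\cdot dd^c G(\lambda,\sigma\circ c_i(\lambda))$ established above one has $dd^c_\lambda\,G(\lambda,\hat c_i(\lambda))=d^{-1}T_i$ on $V$, whence
\[
T_\bif=dd^c L=\sum_{i=1}^{2d-2} dd^c G\big(\lambda,\hat c_i(\lambda)\big)=d^{-1}\sum_{i=1}^{2d-2} T_i \qquad\text{on }V.
\]
Since $T_\bif=dd^c L$ and the $T_i$ are globally defined on $\Lambda$ and $\lambda_0$ was arbitrary, the identity holds on all of $\Lambda$.

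For the support: each $T_i$ is a positive closed $(1,1)$-current, so no cancellation occurs in the sum and $\supp T_\bif=\supp\!\big(\sum_i T_i\big)=\bigcup_{i=1}^{2d-2}\supp T_i$. By the Dujardin--Favre proposition $\supp T_i$ is the activity locus of $c_i$, and by Ma\~n\'e--Sad--Sullivan and Lyubich the bifurcation locus of $(f_\lambda)_{\lambda\in\Lambda}$ is exactly the union of the activity loci of $c_1,\dots,c_{2d-2}$; hence $\supp T_\bif$ is the bifurcation locus. The only substantial ingredient is DeMarco's identity for $L$ with control of the pluriharmonic error term; everything else is positivity of currents and bookkeeping, so that formula is where the real work — or the citation — lies.
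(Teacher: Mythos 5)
The paper states this theorem without proof, as a citation of DeMarco \cite{DeMarco1} combined with the Ma\~n\'e--Sad--Sullivan/Lyubich characterization of the bifurcation locus, so there is no in-paper argument to compare against; your proposal is a valid reconstruction of DeMarco's own proof. You correctly identify the central ingredient — the formula $L(\lambda)=\sum_i G(\lambda,\hat c_i(\lambda))+h(\lambda)$ with $h$ pluriharmonic — and the derivation of the identity is exactly right: applying $dd^c$ kills $h$, and since any two lifts of $c_i(\lambda)$ to $\C^2\setminus\{0\}$ differ by a nonvanishing holomorphic scalar one gets $dd^c_\lambda G(\lambda,\hat c_i(\lambda))=dd^c_\lambda G(\lambda,\sigma\circ c_i(\lambda))=d^{-1}T_i$ with the paper's (nonstandard) normalization $T_i=d\cdot dd^c G(\lambda,\sigma\circ c_i(\lambda))$, yielding $T_\bif=d^{-1}\sum_i T_i$. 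The support statement follows correctly from positivity of the $T_i$ (so that $\supp(\sum_i T_i)=\bigcup_i\supp T_i$), the Dujardin--Favre proposition, and the MSS/Lyubich characterization — all of which the paper itself invokes in this very section. Your outline of the formula for $L$ (factorizing $\mathrm{Jac}\,F_\lambda$ over the critical points in $\P^1$, integrating against $\mu_{f_\lambda}$, and telescoping via the functional equation $G(\lambda,F_\lambda(\cdot))=d\,G(\lambda,\cdot)$, with the resultant and conformal-factor contributions assembling into $h$) is the correct strategy, and you rightly flag this as the one step where the real work lies.
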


The current $T_\bif$ is the \emph{bifurcation current} of of the family $(f_\lambda)_{\lambda\in\Lambda}$. The self-intersections of the current $T_\bif$ have been first studied by Bassanelli and Berteloot \cite{BB1}. 
\begin{definition}
 We define the $m^{\text{th}}$-\emph{bifurcation current} of the family $(f_\lambda)_{\lambda\in\Lambda}$ by setting
$$T_\bif^m:=\bigwedge_{i=1}^mT_\bif~.$$
\end{definition}
It is known that for all $1\leq i\leq 2d-2$, we have $T_i\wedge T_i=0$ (see \cite[Theorem 6.1]{Article1}) and one can show that
\begin{eqnarray}
T_\bif^m=m!\sum_{i_1<\ldots<i_m}T_{i_1}\wedge\cdots \wedge T_{i_m}~.\label{eq:DeM}
\end{eqnarray}

\subsection{A formula for higher bifurcation currents}
We want here to give a similar expression as the one given in Definition~\ref{def:Tc} for the higher bifurcation current associated to a $m$-tuple of critical points. Let us introduce some notations. Let $(f_\lambda)_{\lambda\in\Lambda}$ be a holomorphic family of degree $d$ rational maps with $m$ marked critical points, $c_1,\ldots,c_m:\Lambda\to\P^1$, with $1\leq m\leq \min(2d-2,\dim\Lambda)$. 
As above, we define $v_j:\Lambda\to\P^1$ for $\lambda\in\Lambda$ by $v_j(\lambda):=f_\lambda(c_j(\lambda))$.
This time, we let 
\[\mathcal{V}_{j}:=\{(\lambda,z)\in\Lambda\times(\P^1)^m \ : \ z_j=v_{j}(\lambda)\}\]
We finally let $\pi_\Lambda:\Lambda\times(\P^1)^m\longrightarrow\Lambda$ and, for $1\leq j\leq m$, we let
\begin{eqnarray*}
\pi_j:\Lambda\times(\P^1)^m & \longrightarrow & \Lambda\times\P^1\\
(\lambda, z) & \longmapsto & (\lambda,z_j)
\end{eqnarray*}
be the respective natural projection. Let $T_i$ be the bifurcation current of $c_i$ in $(f_\lambda)_{\lambda\in\Lambda}$.

\begin{lemma}\label{lm:formula}
With the above notations, we have
\begin{center}
$\displaystyle\bigwedge_{j=1}^mT_{j}=
(\pi_\Lambda)_*\left(\bigwedge_{j=1}^m(\pi_j)^*\left(\widehat{T}\right)\wedge[\mathcal{V}_{j}]\right)~.$
\end{center}
\end{lemma}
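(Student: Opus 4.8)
The plan is to reduce the $m$-fold statement to an iterated application of the projection formula together with the definition of the bifurcation current of a single critical point (Definition~\ref{def:Tc}), using the slicing structure of the product $\Lambda\times(\P^1)^m$. First I would set up the factorization of the projections: for each $j$, write $\pi_j = (\id_{\Lambda\times\P^1})\circ q_j$ where $q_j\colon \Lambda\times(\P^1)^m\to \Lambda\times\P^1$ forgets all the $\P^1$-factors except the $j$-th, and observe that $\pi_\Lambda = p_\Lambda\circ q_j$ for every $j$, where $p_\Lambda\colon\Lambda\times\P^1\to\Lambda$ is the projection from Section~\ref{sec:bifcur}. The current $\widehat T = \hat\omega + dd^c g$ on $\Lambda\times\P^1$ is the one appearing in Definition~\ref{def:Tc}, and $[\mathcal V_j] = q_j^*[\hat V_j]$ since $\mathcal V_j$ is exactly the preimage of the graph $\hat V_j$ under $q_j$ (the defining equation $z_j = v_j(\lambda)$ only involves the $j$-th coordinate). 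Thus $\pi_j^*\widehat T \wedge [\mathcal V_j] = q_j^*\bigl(\widehat T\wedge[\hat V_j]\bigr)$.

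Next I would prove the case $m=1$, which is essentially a restatement of Definition~\ref{def:Tc}: here $\Lambda\times(\P^1)^1 = \Lambda\times\P^1$, $q_1 = \id$, $\pi_1 = \id$, $\pi_\Lambda = p_\Lambda$, and the right-hand side is $(p_\Lambda)_*(\widehat T \wedge [\hat V_1]) = T_1$ by definition. For the inductive step I would peel off one factor at a time: writing $\Lambda\times(\P^1)^m = \bigl(\Lambda\times(\P^1)^{m-1}\bigr)\times\P^1$ and letting $\rho\colon\Lambda\times(\P^1)^m\to\Lambda\times(\P^1)^{m-1}$ be the projection forgetting the last factor, I would push forward $\bigwedge_{j=1}^m \pi_j^*\widehat T\wedge[\mathcal V_j]$ first by $\rho$ and then by the projection $\Lambda\times(\P^1)^{m-1}\to\Lambda$. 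The $j=m$ term is $\rho$-relative: it equals $\tilde q_m^*(\widehat T\wedge[\hat V_m])$ for the appropriate projection $\tilde q_m$ to $\Lambda\times\P^1$ through the last factor, so by the projection formula ($\rho_*(\rho^*\alpha\wedge\beta) = \alpha\wedge\rho_*\beta$) and the $m=1$ computation in the last variable, $\rho_*$ of the full wedge equals $\bigl(\bigwedge_{j=1}^{m-1}(\text{pullback to }\Lambda\times(\P^1)^{m-1})\bigr)\wedge[\text{slice}]$ times, effectively, $T_m$ viewed on $\Lambda$ — more precisely one gets $\bigwedge_{j=1}^{m-1}\pi_j'^*\widehat T\wedge[\mathcal V_j']$ wedged against $\rho_*(\pi_m^*\widehat T\wedge[\mathcal V_m])$, and the inductive hypothesis finishes the job after applying $(\pi_\Lambda)_*$.

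The point requiring care — and the main obstacle — is the justification that the wedge products $\bigwedge_{j=1}^m \pi_j^*\widehat T\wedge[\mathcal V_j]$ are \emph{well-defined} as currents and that the projection formula applies in this setting; this is not formal, since $\widehat T$ is only a positive closed current with continuous potential, not smooth, and $[\mathcal V_j]$ is an integration current. Here I would invoke that $g$ (hence the local potential $G(\lambda,\sigma(z))$ of $\widehat T$) is \emph{continuous}, so that intersections of $\widehat T$ with analytic subvarieties and with each other are classically well-defined (Bedford--Taylor / Fornæss--Sibony), and that the supports are in suitably general position — indeed $\mathcal V_j$ is a smooth hypersurface cut by a single coordinate and the $\pi_j^*\widehat T$ involve potentials depending on disjoint coordinates, so no dimension excess occurs. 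One also checks the pushforwards are proper on the relevant supports (this holds because $\pi_\Lambda$ restricted to $\bigcap_j \mathcal V_j$ is an isomorphism onto $\Lambda$, as the $z_j$ are determined by $\lambda$). Finally, one should note that the identity \eqref{eq:DeM} expressing $T_\bif^m$ as a sum of $T_{i_1}\wedge\cdots\wedge T_{i_m}$ is exactly what makes the left-hand side meaningful, and the right-hand side is the ``graph'' reformulation analogous to Definition~\ref{def:Tc}, so the lemma is the natural $m$-critical-point generalization.
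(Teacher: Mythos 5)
Your proposal is correct but takes a genuinely different route from the paper's. The paper's proof is a single direct change of variables: it works locally where $\pi_j^*\widehat{T} = dd^c G_\lambda(\sigma_j(z_j))$ for a continuous potential $G$, tests the current against a test form on $\B$, collapses the product of the $[\mathcal{V}_j]$ to restriction to $\mathcal{V}=\bigcap_j\mathcal{V}_j$, and pulls back along the holomorphic section $p(\lambda)=(\lambda,v_1(\lambda),\dots,v_m(\lambda))$ of $\pi_\Lambda$; since $p^*\bigl(G_\lambda\circ\sigma_j(z_j)\bigr)=G_\lambda(\sigma_j\circ v_j(\lambda))$ is precisely the local potential of $T_j$, the identity falls out at once. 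You instead induct on $m$ and peel off one $\P^1$-factor at a time via the projection $\rho\colon\Lambda\times(\P^1)^m\to\Lambda\times(\P^1)^{m-1}$ and the projection formula. Your reduction $\pi_j^*\widehat{T}\wedge[\mathcal{V}_j]=q_j^*(\widehat{T}\wedge[\hat V_j])$ is right (modulo the harmless slip that $\pi_j$ literally equals $q_j$), and the base case is exactly Definition~\ref{def:Tc}; but for the inductive step to close you must record explicitly that $\rho_*\bigl(\pi_m^*\widehat{T}\wedge[\mathcal{V}_m]\bigr)=(\pi_\Lambda')^*T_m$ on $\Lambda\times(\P^1)^{m-1}$, where $\pi_\Lambda'$ denotes the projection to $\Lambda$. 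This identity is not itself an instance of the projection formula (since $\pi_m$ does not factor through $\rho$); it is the fiberwise $m=1$ computation, most transparently done by restricting the potential $G_\lambda(\sigma(z_m))$ of $\pi_m^*\widehat{T}$ to the graph $\mathcal{V}_m$, where it becomes $G_\lambda(\sigma\circ v_m(\lambda))$ and depends on $\lambda$ only. One then needs a second projection formula application with $\pi_\Lambda'$ to pull out $T_m$ before invoking the inductive hypothesis. What your route buys is that it isolates exactly where the projection formula enters and exposes the fibered structure of the right-hand side; what the paper's route buys is brevity --- the single pullback along $p$ handles all $m$ factors at once --- at the cost of implicitly treating $\bigwedge_j[\mathcal{V}_j]$ as $[\mathcal{V}]$, which, as you correctly flag, is justified because the $\mathcal{V}_j$ are smooth graphs in transverse position and the potentials of $\widehat{T}$ are continuous.
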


\begin{proof}
It is a local problem, so we can assume that $\Lambda=\B$ is a ball of $\C^N$ for some $N\geq1$. Recall that, up to reducing the ball, one can also write $\widehat{T}=dd^c_{\lambda,z}G_\lambda(\sigma(z))$, where $G_\lambda$ is the Green function of a holomorphic family of non-degenerate homogeneous polynomial lift $F_\lambda$ of $f_\lambda$, i.e.
$$G_\lambda(z_1,z_2):=\lim_{n\to\infty}d^{-n}\log\|F^n_\lambda(z_1,z_2)\|, \ (z_1,z_2)\in\C^2\setminus\{0\}~,$$
and $\sigma$ is any local section of the natural projection $\C^2\setminus\{0\}\to\P^1$. Let $\sigma_j$ be such a section which up to reducing the ball contains the image of the map $v_{j}:\B\to\P^1$, then $dd^cG_\lambda(\sigma_j\circ v_{j}(\lambda))=T_{j}$ (see e.g.~\cite[Section 3.2.2]{bsurvey}).

~

Let now $\phi$ be a smooth test $(N-m,N-m)-$form on $\B$ and let $\mathcal{V}:=\bigcap_j\mathcal{V}_{j}$.  Let $p(\lambda):=(\lambda,v_1(\lambda),\dots,v_{m}(\lambda))$, so that $\pi_\Lambda\circ p=\textup{id}_\B$ and $p(\B)=\mathcal{V}$. Then
\begin{align*}
\left\langle (\pi_\Lambda)_*\left(\bigwedge_{j=1}^m\left((\pi_j)^*\widehat{T}\right)\wedge[\mathcal{V}_{j}]\right),\phi\right\rangle & = \int \bigwedge_{j=1}^m(\pi_j)^*\left(\widehat{T}\right)\wedge[\mathcal{V}_{j}]\wedge(\pi_\Lambda)^*\phi \\
& = \int_{\mathcal{V}} \bigwedge_{j=1}^m(\pi_j)^*\left(\widehat{T}\right)\wedge(\pi_\Lambda)^*\phi \\
& = \int_{\mathcal{V}} \bigwedge_{j=1}^m(\pi_j)^*dd^c_{\lambda,z}(G_\lambda \circ \sigma_j)\wedge(\pi_\Lambda)^*\phi \\
& = \int_{p(\B)} \bigwedge_{j=1}^mdd^c_{\lambda,z}G_\lambda(\sigma_j(z_j))\wedge(\pi_\Lambda)^*\phi \\
& = \int_{\B}\bigwedge_{j=1}^mdd^cG_\lambda(\sigma_j\circ v_{j}(\lambda))\wedge\left(\left(p^*(\pi_\Lambda)^*\phi\right)\right)\\
& = \left\langle \bigwedge_{j=1}^mT_{j},\phi\right\rangle~.
\end{align*}
\end{proof}

\section{Generalized large scale condition and the bifurcation currents}\label{sec:TM}
For the whole section, we let $(f_\lambda)_{\lambda\in\Lambda}$ be a holomorphic family of rational maps with $m\geq1$ marked critical points $c_1,\ldots,c_m:\Lambda\longrightarrow\P^1$. As above, we use the notation $v_{j}(\lambda):=f_\lambda(c_{j}(\lambda))$.
We set $\mathfrak{c}:=(c_1,\ldots,c_m)$ and we also use the following notation: For any $m$-tuple of positive integers $\un=(n_1,\ldots,n_m)$, any $1\leq j\leq m$ and any $\lambda\in\Lambda$, we let 
\[\xi_{n_j}^{j}(\lambda)\pe f_\lambda^{n_j}(v_{j}(\lambda))~, \ \text{and} \ 
\mathfrak{V}_{\un}^{\mathfrak{c}}(\lambda):=\left(\xi_{n_1}^{1}(\lambda),\ldots,\xi_{n_m}^{m}(\lambda)\right)~.\]
This way, we define a holomorphic map $\mathfrak{V}_{\un}^{\mathfrak{c}}:\Lambda\longrightarrow(\P^1)^m$.
 We denote by $\mathcal{V}_{\un}$ the graph of $\mathfrak{V}_{\un}^{\mathfrak{c}}$. 
~

We now can define the (generalized) large scale condition.
\begin{definition}[Generalized large scale condition]
We say that a parameter $\lambda_0\in\Lambda$ satisfies the \emph{generalized large scale condition at} $f_{\lambda_0}$ for the $m$-tuple $(c_1,\ldots,c_m)$ in $\Lambda$ if there exist a sequence $\un_k=(n_{k,1},\ldots,n_{k,m})$ of m-tuples with $n_{k,j}\rightarrow+\infty$ and a basis of neighborhood $\{\Omega_k\}_{k\geq1}$ of $f$ in $\Lambda$ and $\delta>0$ such that for any $k$, the connected component of $\mathcal{V}_{\un_k}\cap \Omega_k\times \left(\D(\xi_{n_{k,1}}^1(\lambda_0),\delta)\times\cdots\times \D(\xi_{n_{k,m}}^m(\lambda_0),\delta) \right) $ containing $(\lambda_0,\mathfrak{V}_{\un_k}^{\mathfrak{c}}(\lambda_0))$ is contained in $\Omega'_k\times \left(\D(\xi_{n_{k,1}}^1(\lambda_0),\delta)\times\cdots\times \D(\xi_{n_{k,m}}^m(\lambda_0),\delta) \right)$ for some $\Omega'_k\Subset \Omega_{k}$. 
\end{definition}
We prove in this section the following result which we view as a general sufficient condition for a parameter to belong to the support of a (higher) bifurcation current.

\begin{theorem}\label{tm:sumplus}
Let $1\leq m\leq \min(2d-2,\dim\Lambda)$ be an integer and let $\lambda_0\in\Lambda$. Assume that $\lambda_0$ satisfies the generalized large scale condition for $\mathfrak{c}:=(c_1,\ldots,c_m)$ in a local submanifold $S\ni \lambda_0$ of $\Lambda$ with $\dim S=m$. Assume in addition that $\omega(c_i(\lambda_0))\subset\J_{\lambda_0}$ for all $1\leq i\leq m$. Then $\lambda_0\in\supp(T_{1}\wedge\cdots\wedge T_{m})$.
\end{theorem}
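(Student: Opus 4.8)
The strategy is to reduce the statement to a positivity property of a wedge product of currents restricted to the $m$-dimensional submanifold $S$, using the formula of Lemma~\ref{lm:formula}. First I would work locally: shrink $\Lambda$ so that it is a ball in $\C^N$, the critical points $c_1,\dots,c_m$ are marked, and the hyperbolic set containing $\overline{\bigcup_i \Orb(c_i(\lambda_0))}$ (which exists since $\omega(c_i(\lambda_0))\subset\J_{\lambda_0}$ and, by a standard argument, the post-critical set is then contained in a hyperbolic set) moves holomorphically over a neighborhood of $\lambda_0$. The point of restricting to $S$ is that $\dim S = m$ equals the number of currents, so $T_1\wedge\cdots\wedge T_m$ restricted to $S$ is (up to the pushforward of Lemma~\ref{lm:formula}) the pullback under $\mathfrak{V}_{\un_k}^{\mathfrak c}|_S$ of the wedge $\bigwedge_{j=1}^m \mathrm{pr}_j^*\widehat T$ on $(\P^1)^m$, intersected with the graph; equivalently, after choosing local lifts, it is $dd^c$ of a sum of Green functions evaluated along the critical orbits.

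The heart of the argument is the following: I want to show that the mass of $T_1\wedge\cdots\wedge T_m$ on every neighborhood $\Omega_k'$ of $\lambda_0$ in $S$ is bounded below by a fixed positive constant. By Lemma~\ref{lm:formula} and the characterization of $T_j$ as the limit of $d^{-n}(\xi_n^j)^*\omega_{\mathrm{FS}}$, the mass of $\bigwedge_j T_j$ over $\Omega_k'$ is the limit over $n$ of $d^{-(n_1+\cdots+n_m)}$ times the number of solutions in $\Omega_k'$ (counted with multiplicity) of the system $\xi_{n_j}^j(\lambda)=p_j$ for generic $(p_1,\dots,p_m)$. The large scale condition says precisely that for $p_j$ ranging in the fixed-size disk $\D(\xi_{n_{k,j}}^j(\lambda_0),\delta)$, the connected component of $\mathcal V_{\un_k}\cap(\Omega_k\times\prod_j\D(\cdot,\delta))$ through $(\lambda_0,\mathfrak V(\lambda_0))$ stays inside $\Omega_k'\times\prod_j\D(\cdot,\delta)$; hence the map $\mathfrak V_{\un_k}^{\mathfrak c}|_{\Omega_k}\to\prod_j\D(\xi_{n_{k,j}}^j(\lambda_0),\delta)$ is, on that component, proper of some positive degree, so it has at least one preimage of every point of the target. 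Combining properness on that component with the expansion estimates of the holomorphically-moving hyperbolic set — this is the phase-parameter transfer, the measurable version of Tan Lei's theorem alluded to in the introduction — one gets that the Jacobian of $\mathfrak V_{\un_k}^{\mathfrak c}$ along $S$ grows like $d^{n_{k,1}+\cdots+n_{k,m}}$ up to bounded factors, so that the normalized intersection number over $\Omega_k'$ is bounded below independently of $k$. Since $\bigcap_k\Omega_k'=\{\lambda_0\}$, this forces $\lambda_0\in\supp(T_1\wedge\cdots\wedge T_m)$.

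More concretely, I would proceed in these steps: (1) produce the holomorphic motion $h_\lambda$ of the hyperbolic set $E_{\lambda_0}\supset\overline{\bigcup_i\Orb(c_i(\lambda_0))}$, valid on a neighborhood $U_0$ of $\lambda_0$, with uniform hyperbolicity constants; (2) for each $k$, use the large scale condition to fix the target polydisk $P_k:=\prod_j\D(\xi_{n_{k,j}}^j(\lambda_0),\delta)$ and the component $W_k$ of $\mathcal V_{\un_k}\cap(\Omega_k\times P_k)$, and observe that $\Pi_k:=\mathrm{pr}_{P_k}|_{W_k}:W_k\to P_k$ is proper with topological degree $\ge 1$; (3) transfer this into a lower bound: $\int_{\Omega_k'}(\mathfrak V_{\un_k}^{\mathfrak c})^*(\bigwedge_j\mathrm{pr}_j^*\omega_{\mathrm{FS}})\ge c_0>0$ by properness, then divide by $d^{\sum_j n_{k,j}}$ and pass to the limit using Lemma~\ref{lm:formula} and the convergence $d^{-n}(\xi_n^j)^*\omega_{\mathrm{FS}}\to T_j$; (4) conclude. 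The main obstacle is step (3): properness of $\Pi_k$ only gives a lower bound for the \emph{unnormalized} intersection number, and one must control the normalization — i.e. show that the degree of $\Pi_k$, equivalently the local intersection multiplicities and the number of extra solutions, does not blow up faster than $d^{\sum n_{k,j}}$, and that the mass captured genuinely lies in the shrinking neighborhood $\Omega_k'$ rather than leaking to the boundary. This is exactly where the hyperbolicity of $E_{\lambda_0}$ (giving that each $\xi_{n_j}^j$ is, locally near $\lambda_0$, a branched cover of controlled degree onto a fixed-size disk) and the transversality coming from $\omega(c_i)\subset\J$ enter; handling it carefully, as in the one-critical-point toy model of Section~\ref{toy-model}, is the crux of the proof.
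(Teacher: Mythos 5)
Your overall framing --- reduce to the case $\dim S=m$, use Lemma~\ref{lm:formula}, show positivity of $T_1\wedge\cdots\wedge T_m$ on arbitrarily small neighborhoods of $\lambda_0$ in $S$ --- matches the paper's. But the core of your argument diverges from the paper's and contains two genuine gaps.

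\textbf{Gap 1: no hyperbolic set, no holomorphic motion.} You assert that $\omega(c_i(\lambda_0))\subset\J_{\lambda_0}$ implies the post-critical closure sits in a hyperbolic set that moves holomorphically. This is false in the generality needed: Collet--Eckmann maps (precisely the class to which this theorem is ultimately applied) have their critical points in $\J_f$, so any set containing the full critical orbit contains a critical point and cannot be uniformly expanding. The hypothesis $\omega(c_i(\lambda_0))\subset\J_{\lambda_0}$ is used in the paper for a much weaker purpose only: after extracting a subsequence so that $f_{\lambda_0}^{n_{k,j}}(c_j(\lambda_0))\to x_j$, it guarantees $x_j\in\J_{\lambda_0}=\supp(\mu_{\lambda_0})$, so that the limit slice measure $\prod_j\mu_{\lambda_0}(\D(x_j,\delta))$ appearing in Lemma~\ref{lm:limit} is positive. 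No holomorphic motion or transversality enters the proof of Theorem~\ref{tm:sumplus}; those tools only appear later, in the Misiurewicz and Collet--Eckmann sections, to \emph{verify} the large scale condition, not to exploit it.

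\textbf{Gap 2: normalization and Jacobian growth.} You claim that properness of the component $W_k\to P_k$ plus ``expansion estimates'' gives that the Jacobian of $\mathfrak V_{\un_k}^{\mathfrak c}|_S$ grows like $d^{\sum_j n_{k,j}}$, so that the normalized intersection number is bounded below. This conflates two unrelated rates: the derivative of $f^n$ grows like $e^{nL(f)}$ (or worse, is uncontrolled near the critical orbit), not like $d^n$. The factor $d^{-|\un|}$ in the paper comes from a completely different mechanism: the invariance $(\hat f)^*\widehat T=d\cdot\widehat T$ of the fibered Green current, which yields the \emph{exact} identity $\mu(\Omega)=d^{-|\un|}\int_{\Omega\times(\P^1)^m}\bigwedge_j(\pi_j)^*\widehat T\wedge[\mathcal V_{\un}]$ (Lemma~\ref{lm:formula2}), with no passage to the limit in $n$. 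Your proposed route through $d^{-n}(\xi_n^j)^*\omega_{\mathrm{FS}}\to T_j$ and generic preimage counting would require justifying a double limit (in $n$ and $k$) and controlling wedge products of weak limits, none of which is addressed.

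\textbf{What the paper actually does with the large scale condition.} After the exact rewriting above, one looks at the connected component $S_k$ of $\mathcal V_{\un_k}\cap(\Omega_k\times\D_\delta^m(x))$ through $(\lambda_0,\mathfrak V_{\un_k}(\lambda_0))$. The large scale condition (horizontal confinement inside $\Omega_k'\Subset\Omega_k$) forces $S_k$ to be a ``vertical-like'' analytic set, and $[S_k]/\|[S_k]\|$ is a closed positive $(m,m)$-current of mass one. Since $\Omega_k'$ shrinks to $\{\lambda_0\}$, any weak limit $S$ of $[S_k]/\|[S_k]\|$ is supported on $\{\lambda_0\}\times\D_\delta^m(x)$, hence by extremality equals a positive multiple of $[\{\lambda_0\}\times\D_\delta^m(x)]$. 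Because $\widehat T$ has continuous potential, the wedge products converge, giving $\liminf_k\|[S_k]\|^{-1}I_k\geq c\prod_j\mu_{\lambda_0}(\D(x_j,\delta))>0$, hence $I_k>0$ for $k$ large, hence $\mu(\D(\lambda_0,\epsilon))>0$ for every $\epsilon>0$. No counting, no Jacobians, no hyperbolic sets. Your proof as written does not establish the theorem, and the mechanism you would need to make it work is the one you did not supply: the weak convergence and extremality argument.
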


\begin{proof}[Proof of Theorem~\ref{tm:suppTm}]
By definition of $\mu_\bif$, we have $[f]\in\supp(\mu_\bif)$ if and only if $f\in\supp(T_\bif^{2d-2})$. On the other hand, since $f$ has simple critical points, there exist a neighborhood $U\subset\rat_d$ of $f$ and holomorphic maps $c_1,\ldots,c_{2d-2}:U\rightarrow\P^1$ with $\mathcal{C}(g)=\{c_1(g),\ldots,c_{2d-2}(g)\}$ for all $g\in U$. We then can apply the above Theorem~\ref{tm:sumplus} to $\mathfrak{c}=(c_1,\ldots,c_{2d-2})$. Since, by~\eqref{eq:DeM}, $T_\bif^{2d-2}=(2d-2)!T_1\wedge\cdots\wedge T_{2d-2}$ on $U$, the result follows.
\end{proof}

\begin{remark}\normalfont
Note that the assumption $\omega(c_j(\lambda_0))\subset\J_{\lambda_0}$ is satisfied not only when $c_j(\lambda_0)\in\J_{\lambda_0}$, but also when $c_j(\lambda_0)$ belongs to a parabolic basin.
\end{remark}

First, note that it is sufficient to treat the case when $\dim\Lambda=m$ and $\lambda_0$ satisfies the large scale condition for $\mathfrak{c}:=(c_1,\ldots,c_m)$ in a $\Lambda$. Indeed, by \cite[Lemma 6.3]{Article1}, if $\lambda_0\in\supp(\left(T_{1}\wedge\cdots\wedge T_{m}\right)|_S)$, then $\lambda_0\in\supp\left(T_{1}\wedge\cdots\wedge T_{m}\right)$. We hence may assume $S=\Lambda$ has dimension $m$ and let
\[\mu:=T_{1}\wedge\cdots\wedge T_{m}~.\]
It defines a positive measure on $\Lambda$.

~

Let $(\lambda_1,\ldots,\lambda_m)$ be a local system of holomorphic coordinates centered at $\lambda_0$. We let $\D_\delta^m$ be the polydisk of radius $\delta>0$ of $\Lambda$ centered at $\lambda_0$ in those coordinates. 

\subsection{The case $m=1$: a toy-model for the general case}\label{toy-model}

We give here the proof of Theorem~\ref{tm:sumplus} in the case $m=1$. We let $c:\Lambda\to\P^1$ be the marked critical point satisfying the large scale condition at $\lambda_0$. As above, for $n\geq0$ and $\lambda\in\Lambda$, write
\[\xi_{n}(\lambda):=f_\lambda^{n+1}(c(\lambda))=f_\lambda^{n}(\xi_0(\lambda))~.\]
 Of course, in that case, it is easy to see that the large scale condition implies the non-normality of the family $(f^n_\lambda(c(\lambda)))_n$ but we provide here a proof that can be adapted to work with higher degree bifurcation currents. For that, recall that $\mathcal{V}_{n}$ is the graph of $\xi_{n}:\Lambda\to\P^1$. Recall that $d^{-1}(\hat{f})^*\widehat{T}=\widehat{T}$. Choose any $\epsilon>0$. We shall prove that $\mu(\D(\lambda_0,\epsilon))>0$ for all $\epsilon>0$ small enough. For any $n\geq1$
\begin{eqnarray*}
\mu\left(\D(\lambda_0,\epsilon)\right) & = & \int_{\D(\lambda_0,\epsilon)\times\P^1}\widehat{T}\wedge[\mathcal{V}_{0}]\\
& = & d^{-n}\int_{\D(\lambda_0,\epsilon)\times\P^1}(\hat{f}^{n})^*\left(\widehat{T}\right)\wedge[\mathcal{V}_{0}]~.
\end{eqnarray*}
As a consequence,
\begin{eqnarray*}
I_n & := & d^{n}\mu\left(\D(\lambda_0,\epsilon)\right)=\int_{\D(\lambda_0,\epsilon)\times\P^1}(\hat{f}^{n})^*\left(\widehat{T}\right)\wedge[\mathcal{V}_{0}]\\
& = & \int_{\D(\lambda_0,\epsilon)\times\P^1}\widehat{T}\wedge(\hat{f}^{n})_*[\mathcal{V}_{0}]=\int_{\D(\lambda_0,\epsilon)\times\P^1}\widehat{T}\wedge[\mathcal{V}_{n}],
\end{eqnarray*}
since on one hand $\hat{f}^{-1}(\D(\lambda_0,\epsilon)\times\P^1)=\D(\lambda_0,\epsilon)\times\P^1=\hat{f}\left(\D(\lambda_0,\epsilon)\times\P^1\right)$ and, on the other hand, $(\hat{f}^{n})_*[\mathcal{V}_{0}]=[\mathcal{V}_{n}]$.
Let now $\delta>0$, $(n_k)$ and $(\Omega_k)$ be given by the large scale condition. Up to extraction, we can assume $\xi_{n_k}:=\xi_{n_k}(\lambda_0)$ converges to some point $x\in\J_{\lambda_0}$. As a consequence, there exists $k_0\geq1$ such that $\Omega_k\subset\D(\lambda_0,\epsilon)$ and
\[\xi_{n_k}\left(\Omega_k\right)\supset\D\left(x,\delta/2\right)~,\]
for any $k\geq k_0$. We now let $S_k$ be the connected component of $\mathcal{V}_{n_k}\cap \Omega_{k}\times\D(x,\delta/2)$ containing $(0,\xi_{n_k})$. Then $S_k\subset\Omega_k\times\D(x,\delta/2)$, $[S_k]/\|[S_k]\|$ has mass $1$ and
\[[\mathcal{V}_{n_k}]\geq\mathbf{1}_{\D(\lambda_0,\epsilon)\times\D(x,\delta/2)}[\Gamma_{n_k}]\geq [S_k]~.\]
 for all $k\geq k_0$. Let now $S$ be any weak limit of the sequence $[S_k]/\|[S_k]\|$. Then $\supp(S)=\{\lambda_0\}\times\D(x,\delta/2)$ and $S$ has mass $1$ by the large scale condition. By extremality of the current $[\{\lambda_0\}\times\D(x,\delta/2)]$, we deduce $S=M\cdot[\{\lambda_0\}\times\D(x,\delta/2)]$, where $M^{-1}>0$ is the Fubini-Study area of $\D(x,\delta/2)$. As $\widehat{T}$ has continuous potential, $\widehat{T}\wedge [S_k]/\|[S_k]\|\to \widehat{T}\wedge S$ as $k\to\infty$  in the sense of measures so:
\begin{align*}
\liminf_{k\to\infty}\|[S_k]\|^{-1}\cdot I_{n_k} \geq\liminf_{k\to\infty}\int\widehat{T}\wedge\frac{[S_k]}{\|[S_k]\|}\geq \int\widehat{T}\wedge S&=M\cdot \int\widehat{T}\wedge[\{\lambda_0\}\times\D(x,\delta/2)]\\
                       &=M\cdot \mu_{\lambda_0}\left(\D(x,\delta/2)\right)
\end{align*}
 as $\widehat{T}|_{\lambda=0}=\mu_{\lambda_0}$. Since $x\in\J_f=\textup{supp}(\mu_{\lambda_0})$ then $\mu_{\lambda_0}\left(\D(x,\delta/2)\right)>0$, we get
\[\liminf_{k\rightarrow\infty} \|[S_k]\|^{-1}\cdot I_{n_k}>0~,\]
which means that $I_{n_k}>0$ for $k$ large enough so $\mu(\D(\lambda_0,\epsilon))>0$.
Since this holds for all $\epsilon>0$, this ends the proof.

\subsection{First step: pulling-back by a fibered dynamical system}

Let us define a family of fibered dynamical systems acting on $\Lambda\times(\P^1)^m$ as follows: for any $m$-tuple $\un:=(n_1,\ldots,n_m)\in(\mathbb{N}^*)^m$, we let
\begin{eqnarray*}
F_{\un}:\Lambda\times(\P^1)^m & \longrightarrow & \Lambda\times(\P^1)^m\\
(\lambda,z_1,\ldots,z_m) & \longmapsto & (\lambda,f^{n_1}_\lambda(z_1),\ldots,f^{n_m}_\lambda(z_m))~.
\end{eqnarray*}
For a $m$-tuple $\un=(n_1,\ldots,n_m)$ of positive integers, we also set 
\[|\un|:=n_1+\cdots+n_m~.\]
 Let us first partially rewrite the mass of $\mu$ on any open set in terms of iterated pull-back by one the $F_{\un}$'s.

\begin{lemma}\label{lm:formula2}
For any $m$-tuple $\un=(n_1,\ldots,n_m)$ of positive integers, we let $\mathcal{V}_{\un}$ be the graph in $\Lambda\times(\P^1)^m$ of $\mathfrak{V}_{\un}$. Then, for any Borel set $\Omega\subset\Lambda$, we have
\begin{eqnarray*}
\mu(\Omega) & = & d^{-|\un|}\int_{\Omega\times(\P^1)^m}\left(\bigwedge_{j=1}^m(\pi_j)^*\left(\widehat{T}\right)\right)\wedge\left[\mathcal{V}_{\un}\right]~.
\end{eqnarray*}
\end{lemma}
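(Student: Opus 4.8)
The plan is to reduce the computation of $\mu(\Omega)$ to the expression in \rlem{formula} (with critical points $c_1,\ldots,c_m$ and their first iterates $v_j$) by unwinding the iterates $f_\lambda^{n_j}$ via the dynamical invariance $d^{-1}(\hat f)^*\widehat T=\widehat T$, carried out coordinate by coordinate in the $(\P^1)^m$ factor.

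First I would record the pull-back relation in the product setting. For each $j$, let $\hat f^{(j)}:\Lambda\times(\P^1)^m\to\Lambda\times(\P^1)^m$ act by $f_\lambda$ on the $j$-th $\P^1$-coordinate and trivially on the others; then $\pi_j\circ \hat f^{(j)}=\hat f\circ \pi_j$ and $\pi_\ell\circ \hat f^{(j)}=\pi_\ell$ for $\ell\neq j$, so $(\hat f^{(j)})^*(\pi_j)^*\widehat T=(\pi_j)^*(\hat f)^*\widehat T=d\cdot(\pi_j)^*\widehat T$ while $(\hat f^{(j)})^*(\pi_\ell)^*\widehat T=(\pi_\ell)^*\widehat T$ for $\ell\neq j$. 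Since $F_{\un}$ is the commuting composition $\prod_j (\hat f^{(j)})^{\circ n_j}$, iterating gives
\[
(F_{\un})^*\Bigl(\bigwedge_{j=1}^m(\pi_j)^*\widehat T\Bigr)=d^{\,|\un|}\bigwedge_{j=1}^m(\pi_j)^*\widehat T.
\]
Here I use that $\widehat T$ has continuous local potential, so all these wedge products of pull-backs are well-defined closed positive currents and the pull-back operations are legitimate (this is the same formalism already used for $\hat f^*\widehat T$ in the toy model).

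Next I would push the iterates onto the graph. Let $\mathcal{V}_0=\bigcap_j\mathcal{V}_j$ be the graph of $(v_1,\ldots,v_m)$; by construction $F_{\un}$ maps $\mathcal{V}_0$ onto $\mathcal{V}_{\un}$ properly (it is a finite-to-one holomorphic map, and on $\mathcal{V}_0$, which is a graph over $\Lambda$, the restriction is biholomorphic onto $\mathcal{V}_{\un}$ since $\pi_\Lambda$ conjugates both to $\mathrm{id}_\Lambda$), so $(F_{\un})_*[\mathcal{V}_0]=[\mathcal{V}_{\un}]$. Also $F_{\un}^{-1}(\Omega\times(\P^1)^m)=\Omega\times(\P^1)^m=F_{\un}(\Omega\times(\P^1)^m)$ because $F_{\un}$ fixes the $\Lambda$-coordinate. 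Therefore, writing $\Theta:=\bigwedge_{j=1}^m(\pi_j)^*\widehat T$ and using \rlem{formula} together with the identity $\mu=\bigwedge_j T_j=(\pi_\Lambda)_*(\Theta\wedge[\mathcal{V}_0])$,
\begin{align*}
d^{|\un|}\mu(\Omega)
&=d^{|\un|}\int_{\Omega\times(\P^1)^m}\Theta\wedge[\mathcal{V}_0]
=\int_{\Omega\times(\P^1)^m}(F_{\un})^*\Theta\wedge[\mathcal{V}_0]\\
&=\int_{\Omega\times(\P^1)^m}\Theta\wedge (F_{\un})_*[\mathcal{V}_0]
=\int_{\Omega\times(\P^1)^m}\Theta\wedge[\mathcal{V}_{\un}],
\end{align*}
where the change of variables in the third equality is the standard adjunction $\int \phi^*\alpha\wedge\beta=\int\alpha\wedge\phi_*\beta$ applied to the proper map $\phi=F_{\un}$ on the $F_{\un}$-invariant domain $\Omega\times(\P^1)^m$ (valid since $\alpha=\Theta$ has continuous potential and $\beta=[\mathcal{V}_0]$ is an integration current; this is exactly the step already performed in the $m=1$ case). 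Dividing by $d^{|\un|}$ gives the claim.

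The only genuinely delicate point is justifying the manipulations with the currents $(\pi_j)^*\widehat T$ and their wedge products along $[\mathcal{V}_0]$ and $[\mathcal{V}_{\un}]$: one must check that these wedge products are admissible (which follows from continuity of the potential $g$ of $\widehat T$, exactly as in \rlem{formula}'s proof where $\widehat T=dd^c_{\lambda,z}(G_\lambda\circ\sigma)$ locally), and that the pull-back formula $(F_{\un})^*\Theta=d^{|\un|}\Theta$ and the projection formula are compatible with taking wedge products — here the key is that the $\hat f^{(j)}$'s act on disjoint coordinates, so no intersection issues between the different factors arise and everything reduces to the one-variable statement $d^{-1}\hat f^*\widehat T=\widehat T$ applied $n_j$ times in the $j$-th slot. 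I expect no essential obstacle beyond bookkeeping, since the structure faithfully generalizes the toy model of \S\ref{toy-model}.
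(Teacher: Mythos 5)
Your proof is correct and follows essentially the same route as the paper's: you derive $(F_{\un})^*\bigl(\bigwedge_j(\pi_j)^*\widehat T\bigr)=d^{|\un|}\bigwedge_j(\pi_j)^*\widehat T$ from the coordinate-wise intertwining with $\hat f$, identify $(F_{\un})_*[\mathcal{V}_0]=[\mathcal{V}_{\un}]$, and then apply the change-of-variables (projection) formula together with Lemma~\ref{lm:formula} on the $F_{\un}$-invariant domain $\Omega\times(\P^1)^m$. The only cosmetic difference is that you factor $F_{\un}$ into commuting single-coordinate maps $\hat f^{(j)}$, whereas the paper reads the pull-back relation directly off the commutative square $\pi_j\circ F_{\un}=\hat f^{n_j}\circ\pi_j$; these are the same computation.
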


\begin{proof}
Recall that we defined in Section \ref{sec:bifcur} a dynamical system $\hat f$ acting on $\Lambda\times\P^1$ by
\begin{eqnarray*}
\hat f:\Lambda\times\P^1 & \longrightarrow & \Lambda\times\P^1\\
(\lambda,z) & \longmapsto & (\lambda,f_\lambda(z))~.
\end{eqnarray*}
By definition of $F_{\un}$ and $\hat f$, for all $j$, the following diagram commutes
\begin{center}
$\xymatrix {\relax
\Lambda\times(\P^1)^m \ar[r]^{F_{\un}} \ar[d]_{\pi_j} & \Lambda\times(\P^1)^m\ar[d]^{\pi_j} \\
\Lambda\times\P^1 \ar[r]_{\hat f^{n_j}} & \Lambda\times\P^1~.}$
\end{center}
In particular, by \eqref{eq:pullback}, we get
\begin{eqnarray*}
d^{-n_j}(F_{\un})^*\left((\pi_j)^*\left(\widehat{T}\right)\right) & = & d^{-n_j}(\pi_j\circ F_{\un})^*\left(\widehat{T}\right)= d^{-n_j}(\hat f^{n_j}\circ \pi_j)^*\left(\widehat{T}\right)\\
& = & d^{-n_j}(\pi_j)^*\left((\hat f^{n_j})^*\left(\widehat{T}\right)\right)= (\pi_j)^*\left(\widehat{T}\right).
\end{eqnarray*}
According to Lemma \ref{lm:formula}, the change of variable formula gives
\begin{eqnarray*}
\mu(\Omega) & = & 
\int_{\Omega}(\pi_{\Lambda})_*\left(\bigwedge_{j=1}^m(\pi_j)^*\left(\widehat{T}\right)\wedge[\mathcal{V}_{j}]\right)\\
& = & \int_{\pi_{\Lambda}^{-1}(\Omega)}\bigwedge_{j=1}^m(\pi_j)^*\left(\widehat{T}\right)\wedge[\mathcal{V}_{j}]\\
& = & d^{-|\un|}\int_{\Omega\times(\P^1)^m}(F_{\un})^*\left(\bigwedge_{j=1}^m(\pi_j)^*\left(\widehat{T}\right)\right)\wedge\left[\bigcap_{j=1}^m\mathcal{V}_{j}\right]
\end{eqnarray*}
where the last equality comes from $\pi_{\Lambda}^{-1}(\Omega)=\Omega\times(\P^1)^m$. Whence
\begin{eqnarray*}
\mu(\Omega) = d^{-|\un|}\int_{\Omega\times(\P^1)^m}\left(\bigwedge_{j=1}^m(\pi_j)^*\left(\widehat{T}\right)\right)\wedge(F_{\un})_*\left[\bigcap_{j=1}^m\mathcal{V}_{j}\right]
\end{eqnarray*}
where we used $(F_{\un})_*\bigwedge_j[\mathcal{V}_{j}]=[\mathcal{V}_{\un}]$.
\end{proof}

\subsection{Second step: a phase-parameter transfer phenomenon}

For the sake of simplicity, we let in the sequel $\mathfrak{V}_{k}:=\mathfrak{V}_{\un_k}^{\mathfrak{c}}$, where $\un_k$ is given by the large scale condition. Up to extracting a subsequence, we may assume that $f_{\lambda_0}^{n_k}(c_{j}(\lambda_0))\to x_{j}\in\J_{\lambda_0}$ for all $1\leq j\leq m$. We also let $x:=(x_1,\ldots,x_m)\in(\P^1)^m$.

We want to reduce the problem to a purely dynamical datum of $f$. 
Building on the large scale condition, one actually gets the following.

\begin{proposition}\label{prop:formula3}
There exist $k_0\geq1$ and $\delta,\alpha>0$ such that for any $k\geq k_0$, we have
\[\int_{\Omega_k\times(\P^1)^m}\left(\bigwedge_{j=1}^m(\pi_j)^*\left(\widehat{T}\right)\right)\wedge\left[\mathcal{V}_{\un_k}\right]>0~.\]
\end{proposition}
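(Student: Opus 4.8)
The plan is to mimic the toy-model argument from Section~\ref{toy-model} (the case $m=1$), but now at the level of $(\P^1)^m$ and with the product current $\bigwedge_{j=1}^m(\pi_j)^*\widehat T$ in place of $\widehat T$. First I would use the large scale condition to extract the good data: $\delta>0$, the sequence $\un_k$, the neighborhood basis $\{\Omega_k\}$ and the $\Omega'_k\Subset\Omega_k$, together with the limit points $x_j=\lim_k f_{\lambda_0}^{n_{k,j}}(c_j(\lambda_0))\in\J_{\lambda_0}$ (after passing to a subsequence). Since $\{\Omega_k\}$ is a neighborhood basis, for $k$ large we have $\Omega_k\subset\D_\eps^m$ for any fixed $\eps$; and since $\mathfrak V_{\un_k}^{\mathfrak c}(\lambda_0)\to x$ while the connected component $S_k$ of $\mathcal V_{\un_k}\cap\big(\Omega_k\times\prod_j\D(\xi^j_{n_{k,j}}(\lambda_0),\delta)\big)$ through $(\lambda_0,\mathfrak V_{\un_k}^{\mathfrak c}(\lambda_0))$ stays over $\Omega'_k\Subset\Omega_k$, we get for $k\geq k_0$ that
\[
[\mathcal V_{\un_k}]\ \geq\ \mathbf 1_{\Omega_k\times\prod_{j}\D(x_j,\delta/2)}\,[\mathcal V_{\un_k}]\ \geq\ [S_k],
\]
where $S_k$ is a pure $m$-dimensional analytic subset of $\Omega'_k\times\prod_j\D(x_j,\delta/2)$ whose projection to $\Omega'_k$ is surjective (this is exactly where verticality of the graph plus $\Omega'_k\Subset\Omega_k$ is used).

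Next I would normalize and take a limit. Set $T_k:=[S_k]/\|[S_k]\|$; these are positive closed $(m,m)$-currents of mass $1$ supported in the compact polydisk-type region $\overline{\D_\eps^m}\times\prod_j\overline{\D(x_j,\delta/2)}$, so up to extraction $T_k\to T_\infty$ weakly, with $T_\infty$ positive closed of mass $\leq 1$. The point is that $\supp(T_\infty)\subset\{\lambda_0\}\times\prod_j\overline{\D(x_j,\delta/2)}$: indeed $S_k\subset\Omega_k\times\prod_j\D(x_j,\delta/2)$ and $\bigcap_k\Omega_k=\{\lambda_0\}$. A positive closed current supported on $\{\lambda_0\}\times(\P^1)^m$ is (by the structure/support theorem, as in the $m=1$ case) a combination $c\cdot[\{\lambda_0\}\times\prod_j\D(x_j,\delta/2)]$-type horizontal slice; more precisely $T_\infty=\int(\ldots)$ but here, because $S_k$ is a graph over $\Omega'_k$ of the restriction of $\mathfrak V^{\mathfrak c}_{\un_k}$ and the large scale condition forces the fiberwise mass to be controlled, the limit is $T_\infty=M\cdot[\{\lambda_0\}\times\prod_j\D(x_j,\delta/2)]$ with $M>0$ the reciprocal of the Fubini–Study volume of $\prod_j\D(x_j,\delta/2)$, exactly as in the $m=1$ case; in particular $T_\infty$ has mass $1$, so no mass escapes in the limit.

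Then I would conclude by continuity of potentials. The current $\bigwedge_{j=1}^m(\pi_j)^*\widehat T$ has continuous local potentials (each $\widehat T$ does, being $dd^c$ of the continuous Green function $g$; cf. Section~\ref{sec:bifcur}), so wedging against it is continuous for weak convergence of the $T_k$: $\big(\bigwedge_j(\pi_j)^*\widehat T\big)\wedge T_k\to\big(\bigwedge_j(\pi_j)^*\widehat T\big)\wedge T_\infty$ as measures. Restricting $\widehat T$ to the fiber $\{\lambda_0\}\times\P^1$ gives $\mu_{\lambda_0}$, so
\[
\Big(\bigwedge_{j=1}^m(\pi_j)^*\widehat T\Big)\wedge T_\infty\ =\ M\cdot\bigotimes_{j=1}^m\big(\mu_{\lambda_0}|_{\D(x_j,\delta/2)}\big)
\]
as a measure on $\{\lambda_0\}\times\prod_j\D(x_j,\delta/2)$. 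Its total mass is $M\prod_j\mu_{\lambda_0}(\D(x_j,\delta/2))$, which is strictly positive because each $x_j\in\J_{\lambda_0}=\supp(\mu_{\lambda_0})$. Therefore
\[
\liminf_{k\to\infty}\ \|[S_k]\|^{-1}\int_{\Omega_k\times(\P^1)^m}\Big(\bigwedge_{j=1}^m(\pi_j)^*\widehat T\Big)\wedge[\mathcal V_{\un_k}]\ \geq\ \liminf_{k\to\infty}\int\Big(\bigwedge_{j=1}^m(\pi_j)^*\widehat T\Big)\wedge T_k\ >\ 0,
\]
using $[\mathcal V_{\un_k}]\geq[S_k]$ and positivity of $\bigwedge_j(\pi_j)^*\widehat T$. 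Hence the integral in the statement is $>0$ for all large $k$, which is the claim (with $\delta$ from the large scale condition, $\alpha$ unused or absorbable, and $k_0$ large enough). The main obstacle, as in the $m=1$ toy model, is justifying that the normalized limit $T_\infty$ still has full mass $1$ and equals the expected horizontal current — i.e.\ that no mass leaks to the boundary $\partial\D(x_j,\delta/2)$ or escapes to infinity in the parameter direction; this is precisely what the ``vertical-like'' part of the large scale condition ($S_k$ over $\Omega'_k\Subset\Omega_k$, with fiberwise size bounded by $\delta$) is designed to guarantee, via the extremality of $[\{\lambda_0\}\times\prod_j\D(x_j,\delta/2)]$ among positive closed currents supported on that slice.
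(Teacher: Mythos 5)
Your argument is correct and follows essentially the same route as the paper: you mimic the $m=1$ toy model, restrict $[\mathcal V_{\un_k}]$ to the connected graph piece $S_k$ over $\Omega'_k\Subset\Omega_k$, normalize, use verticality and extremality to identify the weak limit as $M\cdot[\{\lambda_0\}\times\prod_j\D(x_j,\delta/2)]$, and pass to the limit via continuity of the potentials of $\widehat T$. The only cosmetic difference is that the paper isolates the final fiberwise computation $\bigwedge_j(\pi_j)^*\widehat T|_{\{\lambda_0\}\times\cdot}=\bigotimes_j\mu_{\lambda_0}$ as a separate lemma (Lemma~\ref{lm:limit}), whereas you carry it out inline.
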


\begin{proof}[Proof of Proposition~\ref{prop:formula3}]
Set
\[I_k:=\int_{\Omega_k\times(\P^1)^m}\left(\bigwedge_{j=1}^m(\pi_j)^*\left(\widehat{T}\right)\right)\wedge\left[\mathcal{V}_{\un_k}\right]~,\]
and let $\delta$ be given by the large scale condition. Let $S_k$ be the connected component of $\mathcal{V}_{\un_k}\cap \Omega_k\times\D_\delta^m(x)$ containing $(0,\mathfrak{V}_{\un_k}(0))$.  Up to replacing $\delta$ with $\delta/2$, for any $k\geq k_1$, the current $[S_k]/\|[S_k]\|$ is of a vertical current of mass $1$ in $\Lambda\times\D_\delta^m(v_\infty)$ and
\[\supp([S_k])=S_k\subset\Omega_{k}\times\D_\delta^m(x)~.\]
As in the case $m=1$, let $S$ be any weak limit of the sequence $[S_k]/\|[S_k]\|$. Then $S$ is a closed positive $(m,m)$-current of mass $1$ in $\B_\epsilon\times\D^m_\delta(x)$ with $\supp(S)=\{\lambda_0\}\times\D_\delta^m(x)$ by the large scale condition. Hence, by extremality of $[\{\lambda_0\}\times\D^m_\delta(x)]$, we have that  $S=M\cdot[\{\lambda_0\}\times\D^m_\delta(x)]$, where $M^{-1}>0$ is the volume of $\D_\delta^m(x)$ for the volume form $\bigwedge_j\omega_j$, where $\omega_j=(p_j)^*\omega_{\textup{FS}}$ and $p_j:(\P^1)^m\to\P^1$ is the projection on the $j$-th coordinate.

~

As a consequence, $[S_k]/\|[S_k]\|$ weakly converges to $S$ as $k\to\infty$. Since $\bigwedge_{j=1}^m(\pi_j)^*\left(\widehat{T}\right)$ is the wedge product of $(1,1)$ current with continuous potentials, we have 
\[ \bigwedge_{j=1}^m(\pi_j)^*\left(\widehat{T}\right)\wedge\frac{[S_k]}{\|[S_k]\|} \to \bigwedge_{j=1}^m(\pi_j)^*\left(\widehat{T}\right)\wedge S \]
so
\[\liminf_{k\to\infty}\left(\|[S_k]\|^{-1}\cdot I_k\right)\geq\liminf_{k\to\infty}\int\bigwedge_{j=1}^m(\pi_j)^*\left(\widehat{T}\right)\wedge\frac{[S_k]}{\|[S_k]\|}\geq \int\bigwedge_{j=1}^m(\pi_j)^*\left(\widehat{T}\right)\wedge S.\]
 By the above, this gives
\begin{align*}
\liminf_{k\to\infty}\left(\|[S_k]\|^{-1}\cdot I_k\right)&\geq M\cdot \int_{\{\lambda_0\}\times\D_\delta^m(x)}\bigwedge_{j=1}^m(\pi_j)^*\left(\widehat{T}\right)\\
&\geq \int\bigwedge_{j=1}^m(\pi_j)^*\left(\widehat{T}\right)\wedge\left[\{\lambda_0\}\times\D_\delta^m(x)\right].
\end{align*}
The proof of the proposition directly follows from the following lemma.
\end{proof}
\begin{lemma}\label{lm:limit}
For any $\delta>0$, and any $x=(x_1,\ldots,x_m)\in\left(\J_f\right)^m$, we have
\begin{eqnarray*}
\int\bigwedge_{j=1}^m(\pi_j)^*\left(\widehat{T}\right)\wedge\left[\{\lambda_0\}\times\D_\delta^m(x)\right]=\prod_{j=1}^m\mu_{\lambda_0}(\D(x_j,\delta))>0.
\end{eqnarray*} 
\end{lemma}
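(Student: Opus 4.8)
The plan is to reduce the wedge product on the slice $\{\lambda_0\}\times(\P^1)^m$ to a product of one-variable integrals over $\P^1$, by exploiting the product structure of the currents $(\pi_j)^*(\widehat{T})$ and of the polydisk $\D_\delta^m(x) = \D(x_1,\delta)\times\cdots\times\D(x_m,\delta)$. First I would restrict everything to the fiber over $\lambda_0$: by definition of $\widehat{T} = \hat\omega + dd^c g$ on $\Lambda\times\P^1$, the restriction of $\widehat{T}$ to $\{\lambda_0\}\times\P^1$ is a closed positive $(1,1)$-current with continuous potential which, by the defining equation \eqref{eq:pullback} and uniqueness of the maximal entropy measure, equals $\mu_{\lambda_0}$ (this is exactly the identity $\widehat{T}|_{\lambda=\lambda_0} = \mu_{\lambda_0}$ already used in the $m=1$ case of Section~\ref{toy-model}). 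Since the $j$-th factor current $(\pi_j)^*(\widehat{T})$ only involves the $z_j$-variable, its restriction to the slice $\{\lambda_0\}\times(\P^1)^m$ is $(p_j)^*(\mu_{\lambda_0})$, where $p_j:(\P^1)^m\to\P^1$ is the $j$-th coordinate projection.

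Next I would invoke the fact that the wedge product of pullbacks of positive $(1,1)$-currents with continuous potentials under the coordinate projections of a product manifold is intersection-theoretically the product measure:
\[\bigwedge_{j=1}^m (p_j)^*(\mu_{\lambda_0}) = \mu_{\lambda_0}\otimes\cdots\otimes\mu_{\lambda_0}\]
as a positive measure on $(\P^1)^m$. This is standard (it follows from Fubini for the smooth approximants $(p_j)^*(\mu_{\lambda_0}\ast\rho_\eps)$ together with continuity of the Bedford--Taylor product along decreasing sequences of continuous potentials), but since the slice $\{\lambda_0\}\times(\P^1)^m$ is a genuine submanifold and $[\{\lambda_0\}\times\D_\delta^m(x)]$ is the integration current over an open subset of it, the integral in the statement is simply
\[\int_{\D_\delta^m(x)} \bigwedge_{j=1}^m (p_j)^*(\mu_{\lambda_0}) = (\mu_{\lambda_0}\otimes\cdots\otimes\mu_{\lambda_0})\bigl(\D(x_1,\delta)\times\cdots\times\D(x_m,\delta)\bigr) = \prod_{j=1}^m \mu_{\lambda_0}(\D(x_j,\delta)).\]
Finally, each factor is strictly positive: since $x_j\in\J_f = \supp(\mu_{\lambda_0})$, every disk $\D(x_j,\delta)$ centered at $x_j$ has positive $\mu_{\lambda_0}$-mass, so the product is $>0$.

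The only genuinely non-routine point is justifying the Fubini-type identity $\bigwedge_j (p_j)^*(\mu_{\lambda_0}) = \bigotimes_j \mu_{\lambda_0}$ at the level of Bedford--Taylor products rather than for smooth forms; I expect this to be the main (modest) obstacle, handled by approximating $g|_{\lambda=\lambda_0}$ from above by a decreasing sequence of smooth $\hat\omega$-psh potentials, applying ordinary Fubini to the resulting smooth volume forms, and passing to the limit using continuity of the Monge--Ampère product under uniformly convergent continuous potentials (Bedford--Taylor). All the currents in play have continuous potentials, so no delicate intersection issue arises, and the restriction-to-slice step is legitimate because the potentials are continuous up to the slice.
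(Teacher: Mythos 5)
Your proposal is correct and takes essentially the same route as the paper: restrict each $(\pi_j)^*(\widehat T)$ to the slice $\{\lambda_0\}\times(\P^1)^m$ to obtain $(p_j)^*(\mu_{\lambda_0})$, then identify the wedge product with the product measure via Fubini. The only stylistic difference is that you spell out the Bedford--Taylor regularization underlying the Fubini identity $\bigwedge_j(p_j)^*\mu_{\lambda_0}=\bigotimes_j\mu_{\lambda_0}$, whereas the paper simply invokes ``Fubini Theorem yields'' after the slicing step.
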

\begin{proof}
Let us set $\omega_j:=\left((\pi_j)^*\hat\omega\right)|_{\lambda=\lambda_0}$. First, we can remark that $g|_{\lambda=\lambda_0}=g_{\lambda_0}$ is the Green function of the rational map $f_{\lambda_0}$. We denote by $p_j:(\P^1)^k\to\P^1$ the canonical projection onto the $j^{th}$ coordinate. A classical slicing argument gives
$$\left((\pi_j)^*dd^cg\right)|_{\lambda=\lambda_0}=dd^c_z(g_{\lambda_0}\circ p_j)=(p_j)^*dd^cg_{\lambda_0}~.$$
In particular, since $\omega_j=(p_j)^*\omega_{\textup{FS}}$, we have
\[\left.\left((\pi_j)^*(\widehat{T})\right)\right|_{\lambda=\lambda_0}=\left.\left((\pi_j)^*(\omega_{\textup{FS}}+dd^cg)\right)\right|_{\lambda=\lambda_0}=(p_j)^*(\mu_{\lambda_0})~,\]
where $\mu_{\lambda_0}$ is the maximal entropy measure of $f_{\lambda_0}$, hence
\begin{eqnarray*}
I:=\int\bigwedge_{j=1}^m(\pi_j)^*\left(\widehat{T}\right)\wedge\left[\{\lambda_0\}\times\D_\delta^m(x)\right] & = & \int_{\{\lambda_0\}\times \D_\delta^m(x)}\bigwedge_{j=1}^m(\pi_j)^*\left(\widehat{T}\right)\\
& = & \int_{\D_\delta^m(x)}\bigwedge_{j=1}^m\left.\left((\pi_j)^*\left(\widehat{T}\right)\right)\right|_{\lambda=\lambda_0}\\
& = & \int_{\D_\delta^m(x)}\bigwedge_{j=1}^m(p_j)^*\left(\mu_{\lambda_0}\right)~.
\end{eqnarray*}
Since $\supp(\mu_{\lambda_0})=\J_{\lambda_0}$ and $x_1,\ldots,x_m\in\J_{\lambda_0}$, Fubini Theorem yields
$$I = \prod_{j=1}^m\left(\int_{\D(x_j,\delta)}\mu_{\lambda_0}\right)=\prod_{j=1}^m\mu_{\lambda_0}(\D(x_j,\delta))>0~,$$
which ends the proof of Lemma \ref{lm:limit}.
\end{proof}
The proof of Theorem \ref{tm:sumplus} directly follows from Lemma~\ref{lm:formula2} and Proposition~\ref{prop:formula3}.

\section{Misiurewicz maps and the generalized large scale condition}\label{sec:Mis}

Fix $d\geq2$ and pick $1\leq k\leq 2d-2$. A rational map $f\in\rat_d$ is $k$-\emph{Misiurewicz} if the following properties hold:
\begin{itemize}
\item $f$ has no parabolic periodic points,
\item $f$ has $k$ critical points in its Julia set, counted with multiplicity,
\item for any $c\in\mathcal{C}(f)\cap\J_f$, we have $\omega(c)\cap \mathcal{C}(f)=\varnothing$.
\end{itemize}
In this section, we want to emphasize that the large scale condition is the good condition for proving that specific parameters lie in the support of the bifurcation measure. Our motivation here is also to provide a simpler and more intrinsic proof of Theorem~1.4 of \cite{Article1}. Recall that for a critical point $c_j$, we denote by $v_j$ the critical value $f(c_j)$.
According to Theorem~\ref{tm:sumplus}, it is sufficient to prove the existence of a (local) submanifold in which $f$ satisfies the (generalized) large scale condition. More precisely, we prove the following.

\begin{theorem}\label{tm:Misbigscale}
Let $f\in\rat_d$ and $1\leq k\leq 2d-2$. Assume that $f$ is $k$-Misiurewicz 
 and that $\mathcal{C}(f)\cap \J_f=\{c_1,\ldots,c_k\}$. Assume that  for all $n\in \N$ and all $i\ne j \leq k$, $f^n(v_j)\neq v_i$. 
 If $f$ is not a flexible Latt\`es map, then there exists a $k$-dimensional local submanifold $\Lambda_f\subset\mathrm{Rat}_d$ such that $f$ satisfies the generalized large scale condition for $(c_1,\ldots,c_k)$ in $\Lambda_f$.
\end{theorem}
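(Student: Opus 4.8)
\textbf{Proof plan for Theorem~\ref{tm:Misbigscale}.}

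The strategy is to exhibit the submanifold $\Lambda_f$ and the tuples $\un_k$ directly, using the classical Misiurewicz transversality machinery together with the stability of the hyperbolic set carrying the postcritical set. First I would let $K$ be the union of the (disjoint, by the Misiurewicz hypothesis) forward orbit closures $\omega(c_i(f))$ together with the critical orbit segments $\{f^n(v_i)\}_{n\geq 0}$; since $f$ has no parabolic points and $\omega(c_i)\cap\mathcal C(f)=\varnothing$, each $\omega(c_i(f))$ is a compact hyperbolic (expanding) subset of $\J_f$ disjoint from $\mathcal C(f)$. By standard hyperbolic continuation there is a neighborhood $U\subset\rat_d$ of $f$ and a holomorphic motion $h_g:K\to\P^1$, $g\in U$, with $h_f=\id$, conjugating $f|_K$ to $g|_{h_g(K)}$. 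This lets me define, for each $i$, the holomorphically moving target point $\gamma_i(g):=\lim_{n}h_g(f^n(v_i(f)))$ along a chosen convergent subsequence $n=n_{k,i}$ (after extracting so that $f^{n_{k,i}}(v_i(f))\to x_i\in\omega(c_i(f))\subset K$), and the point $x_i$ moves as $h_g(x_i)$.

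Next I would build $\Lambda_f$. The natural candidate is a local $k$-dimensional submanifold through $f$ on which the $k$ ``critical orbit relations'' $g\mapsto (g^{n}(v_i(g)))$ are, collectively, as transverse as possible to the stable lamination $\{h_g(x_i)\}$. Concretely, using the Misiurewicz transversality theorem (in the form: the graph of $g\mapsto g^{n}(v_i(g))$ and the moving point $g\mapsto h_g(x_i)$ cross transversally for $n$ large, because $f$ is not a flexible Lattès map and $f^n(v_j)\neq v_i$ for $i\neq j$, which rules out persistent critical relations between distinct critical points), one shows the $(2d-2)\times(2d-2)$ (or rather, the relevant $k$ directional) derivative of $(g\mapsto (h_g^{-1}\circ g^{n})(v_i(g)) - x_i)_{i=1}^{k}$ has rank $k$ for $n$ in the chosen subsequence. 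I then take $\Lambda_f$ to be a $k$-dimensional submanifold complementary to the kernel of this differential (equivalently, a slice along which these $k$ functions form a local submersion onto a neighborhood of $0$ in $\C^k$). On $\Lambda_f$, the map $g\mapsto \mathfrak V^{\mathfrak c}_{\un_k}(g)=(g^{n_{k,i}}(v_i(g)))_i$, read in the moving charts $h_g$, is a biholomorphism from a small neighborhood of $f$ onto a neighborhood of $(x_1,\dots,x_k)$ whose derivative at $f$ is (up to the holomorphic motion, which is uniformly quasiconformal hence has bounded distortion) uniformly bounded above and below in $k$ independently of the scale.

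The final step is to cash this in as the generalized large scale condition. Fix $\delta>0$ smaller than the (uniform) radius on which $h_g$ is defined and on which the above local biholomorphism estimates hold. Given a basis of neighborhoods $\Omega_k\downarrow\{f\}$ in $\Lambda_f$, the uniform two-sided derivative bound shows that $\mathfrak V^{\mathfrak c}_{\un_k}(\Omega_k)$ (in the moving coordinates, hence also in the fixed coordinates up to the bounded quasiconformal distortion) contains a fixed-size polydisk around $\mathfrak V^{\mathfrak c}_{\un_k}(f)$ while being vertical-like: the connected component of $\mathcal V_{\un_k}\cap (\Omega_k\times \D_\delta^m(\xi_{\un_k}(f)))$ through $(f,\mathfrak V^{\mathfrak c}_{\un_k}(f))$ stays over a relatively compact $\Omega'_k\Subset\Omega_k$ because the graph is the graph of a map whose image polydisk is much larger than needed and whose inverse is Lipschitz with a scale-independent constant. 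Since moreover $\omega(c_i(f))\subset\J_f$, Theorem~\ref{tm:sumplus} applies. The main obstacle is the transversality input: ensuring that the $k$ critical-orbit functions are \emph{simultaneously} transverse to the stable lamination with a lower bound on the relevant Jacobian that is \emph{uniform along the subsequence} $n_{k,i}\to\infty$ — this is exactly where one invokes, à la Tan Lei/van Strien, the non-vanishing of the transversality coefficient (the obstruction being flexible Lattès maps and persistent critical identifications $f^n(v_j)=v_i$), and where the uniformity comes from the expansion on $K$ pushing any initial transversality to definite size after finitely many steps. Everything else is bounded-distortion bookkeeping through the holomorphic motion.
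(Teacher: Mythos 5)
Your overall strategy matches the paper's: both use the hyperbolic set $E_f$ carrying the postcritical set, its dynamical holomorphic motion, a transversality statement coming from \cite{Astorg} to produce the $k$-dimensional slice $\Lambda_f$, and a rescaling to exhibit a uniform-size image polydisk along a subsequence $n\to\infty$. So this is the same route, not a genuinely different one. However, there is a concrete gap in the heart of the argument.

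The claim that the needed two-sided derivative bound (equivalently, the bounded distortion of the rescaled maps $\phi_n$) follows because the holomorphic motion ``is uniformly quasiconformal hence has bounded distortion'' does not work. Quasiconformality of $h_g$ gives uniform H\"older control of $h_g$, but it controls neither the derivative of $g\mapsto g^n(v_i(g))$ nor the distortion of these high iterates. What the paper actually proves (and what you must prove) is a normality/boundedness statement for the rescaled sequence
\[
\phi_n^i(\lambda) := \xi_n^i\!\left(\tfrac{\lambda}{|(f^n)'(v_i)|}\right) - \xi_n^i(0),
\]
and the mechanism is entirely different: one splits $|\xi_n^i(\lambda)-\xi_n^i(0)|$ into the drift $|f_\lambda^n(v_i(\lambda))-f_\lambda^n(h_\lambda(v_i))|$, bounded inductively using Koebe distortion applied to the univalent inverse branches $g_{n,\lambda}$ living on a fixed-size disk around $f_\lambda^n(h_\lambda(v_i))$, together with a comparison $|(f_\lambda^{n-1})'(h_\lambda(v_i))| \asymp |(f^{n-1})'(v_i)|$ when $\|\lambda\|\lesssim 1/|(f^n)'(v_i)|$ (this is \cite[Lemma 6.6]{Article1}); plus the motion term $|f_\lambda^n(h_\lambda(v_i))-f^n(v_i)|=|h_\lambda(f^n(v_i))-f^n(v_i)|$, bounded by uniform continuity of the motion via the $\lambda$-lemma. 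Once normality of $(\phi_n)$ is in hand, the transversality limits $\tau_i$ from Proposition~\ref{prop:transM} are precisely what makes every subsequential limit $\phi$ have invertible differential at $0$, hence open and vertical-like — this is the ``uniform along the subsequence'' lower bound you correctly flag as the crux, but which your proposal does not actually derive. In short: replace the appeal to quasiconformality of $h_g$ by the Koebe-plus-derivative-comparison argument on the rescaled family, and make the rescaling by $(f^n)'(v_i)$ explicit; without these, the proposal has a genuine hole in the distortion step.
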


This approach allows us to exhibit the key expansion and distortion arguments, without using more elaborate tools that will necessarily be missing in a more general situation, such as linearizing coordinates along repelling orbits.  

Observe also that the condition  "for all $n\in \N$ and all $i\ne j \leq k$, $f^n(v_j)\neq v_i$" is not an issue: any $k$-\emph{Misiurewicz} map that does not satisfy this condition can be approximated by $k$-\emph{Misiurewicz} maps that do satisfy it using Montel theorem. In particular, it is in the support of the bifurcation current $T_1\wedge \dots \wedge T_k$.

\subsection{Hyperbolic sets and holomorphic motions}
We recall some classical definitions and facts. Let $f\in\rat_d$ and $E\subset\P^1$ be a non-empty compact $f$-invariant set, i.e. such that $f(E)\subset E$. We say that $E$ is $f$-\emph{hyperbolic} if one of the following equivalent conditions is satisfied:
\begin{enumerate}
\item there exist $C>0$ and $\alpha>1$ such that $|(f^n)'(z)|\geq C\alpha^n$ for all $z\in E$ and all $n\geq0$,
\item for some appropriate metric on $\P^1$, there exists $K>1$ such that $|f'(z)|\geq K$ for all $z\in E$. Such a constant $K$ is called a \emph{hyperbolicity constant} for $f$.
\end{enumerate}
Recall also that a \emph{holomorphic motion} of a set $X\subset \P^1$ over a complex manifold $\Lambda$ centered at $\lambda_0\in\Lambda$ is a map $h:\Lambda\times X\rightarrow\P^1$ such that:
\begin{itemize}
\item $h_{\lambda_0}:=h(\lambda_0,\cdot):X\to\P^1$ is the identity map,
\item $h_{\lambda}:=h(\lambda,\cdot):X\to\P^1$ is injective for all $\lambda\in\Lambda$,
\item $\lambda\mapsto h(\lambda,x)$ is holomorphic on $\Lambda$ for all $x\in X$.
\end{itemize}

First, notice that the classical $\lambda$-lemma of Ma\~n\'e-Sad-Sullivan~\cite{MSS} says that any holomorphic motion of $X$ extends continuously to a holomorphic motion of the closure of $X$. It is known that a hyperbolic set $E$ admits a natural holomorphic motion (see, e.g., \cite[Property (1.2) page 229]{Shishikura2} or \cite[Theorem C]{Jonsson-rep}).

\begin{theorem}
Let $(f_\lambda)_{\lambda\in\mathbb{B}(0,r)}$ be a holomorphic family of degree $d$ rational maps parametrized by a ball $\mathbb{B}(0,r)\subset\C^m$. Let $E_0\subset\P^1$ be a compact $f_0$-hyperbolic set. Then there exist $0<\rho\leq r$ and a unique holomorphic motion 
\begin{eqnarray*}
h:\mathbb{B}(0,\rho)\times E_0 & \longrightarrow & \P^1\\
(\lambda,z) & \longmapsto & h_\lambda(z)
\end{eqnarray*}
centered at $0$ and such that $f_\lambda\circ h_\lambda(z)=h_\lambda\circ f_0(z)$ for all $z\in E_0$.\label{mvtholo}
\end{theorem}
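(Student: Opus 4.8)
The plan is to realize $h$ as the unique fixed point of a contracting graph transform built from the inverse branches of $f_\lambda$, and then to upgrade the resulting semiconjugacy to a genuine holomorphic motion using the expansiveness of $E_0$. The whole argument is local near $\lambda=0$, so shrinking $\rho$ finitely many times is harmless.

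First I would fix a metric on $\P^1$ witnessing the hyperbolicity: by condition (2) there are a neighborhood $U\supset E_0$ and $K>1$ with $|f_0'|\geq K$ on $U$, so in particular $f_0$ has no critical point in $U$. By continuity of $\lambda\mapsto f_\lambda$ and compactness of $E_0$ one finds $\rho_0>0$, a neighborhood $U'\supset E_0$ and $K'\in(1,K]$ with $|f_\lambda'|\geq K'$ on $U'$ for all $\|\lambda\|<\rho_0$, together with a uniform radius $\delta>0$ such that for every $z\in E_0$ and every $\|\lambda\|<\rho_0$ the map $f_\lambda$ admits a holomorphic inverse branch $g_{\lambda,z}$ on $\D(f_0(z),\delta)$, depending holomorphically on $(\lambda,w)\in\B(0,\rho_0)\times\D(f_0(z),\delta)$, with $g_{0,z}(f_0(z))=z$ and $\mathrm{dist}(g_{\lambda,z}(w),g_{\lambda,z}(w'))\leq \tfrac1{K'}\mathrm{dist}(w,w')$ on that disk. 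Note also that, by compactness, $\sup_{z\in E_0}\mathrm{dist}(g_{\lambda,z}(f_0(z)),z)\to 0$ as $\lambda\to 0$.

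Next, fix a small $\varepsilon\in(0,\delta/2)$ and let $\mathcal H$ be the set of continuous maps $h:\B(0,\rho)\times E_0\to\P^1$ that are holomorphic in $\lambda$, with $h(0,\cdot)=\mathrm{id}$ and $\sup\,\mathrm{dist}(h(\lambda,z),z)\leq\varepsilon$; with the uniform distance this is a complete metric space, since a uniform limit of maps holomorphic in $\lambda$ is holomorphic in $\lambda$. Using that $E_0$ is $f_0$-invariant, define $(Th)(\lambda,z):=g_{\lambda,z}\big(h(\lambda,f_0(z))\big)$, which is legitimate because $h(\lambda,f_0(z))\in\D(f_0(z),\varepsilon)\subset\D(f_0(z),\delta)$. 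Then $(Th)(0,\cdot)=\mathrm{id}$, $Th$ is continuous and holomorphic in $\lambda$, and — after shrinking $\rho\leq\rho_0$ so that $\sup_{z\in E_0}\mathrm{dist}(g_{\lambda,z}(f_0(z)),z)\leq(1-\tfrac1{K'})\varepsilon$ for $\|\lambda\|<\rho$ — the contraction property of $g_{\lambda,z}$ gives $\mathrm{dist}((Th)(\lambda,z),z)\leq\varepsilon$, so $T(\mathcal H)\subseteq\mathcal H$. The same contraction property yields $\|Th_1-Th_2\|\leq\tfrac1{K'}\|h_1-h_2\|$, hence $T$ has a unique fixed point $h$, and applying $f_\lambda$ to $h(\lambda,z)=g_{\lambda,z}(h(\lambda,f_0(z)))$ gives $f_\lambda\circ h_\lambda=h_\lambda\circ f_0$ on $E_0$.

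The remaining point, which I regard as the only non-formal step, is the injectivity of $h_\lambda$, since a priori the fixed point only produces a semiconjugacy. Here I would invoke that an expanding compact invariant set is expansive: there is $\eta_0>0$, depending only on $K$, $U$ and the metric, such that if $z\neq z'$ in $E_0$ then $\mathrm{dist}(f_0^n(z),f_0^n(z'))>\eta_0$ for some $n\geq0$ (if the whole pair of forward orbits stays $\eta_0$-close, local expansion forces $\mathrm{dist}(f_0^n(z),f_0^n(z'))\leq K^{-m}\eta_0$ for all $m\geq n$, hence $z=z'$). Choosing $\varepsilon<\eta_0/3$ from the outset, if $h_\lambda(z)=h_\lambda(z')$ then the conjugacy gives $h_\lambda(f_0^n(z))=f_\lambda^n(h_\lambda(z))=f_\lambda^n(h_\lambda(z'))=h_\lambda(f_0^n(z'))$ for all $n$, while at the separating index $\mathrm{dist}(h_\lambda(f_0^n(z)),h_\lambda(f_0^n(z')))\geq\eta_0-2\varepsilon>0$, a contradiction; hence $z=z'$ and $h$ is a holomorphic motion. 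Finally, for uniqueness: any other holomorphic motion $\tilde h$ centered at $0$ with $f_\lambda\circ\tilde h_\lambda=\tilde h_\lambda\circ f_0$ lies in $\mathcal H$ after a further shrinking of $\rho$ (by holomorphicity in $\lambda$ and $\tilde h_0=\mathrm{id}$), and the conjugacy together with the uniqueness of the inverse branch of $f_\lambda$ near $z$ over $\D(f_0(z),\delta)$ forces $\tilde h=T\tilde h$, whence $\tilde h=h$.
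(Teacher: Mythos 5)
Your proof is correct. The paper itself does not spell out a proof of \rthm{mvtholo} — it cites Shishikura and Jonsson for the result and records only Lemma~\ref{lm:inversequantitative} (the quantitative, $\lambda$-holomorphic inverse branches near $E_0$), which is exactly the ingredient your contraction operator $T$ is built from; your fixed-point/graph-transform construction, expansivity argument for injectivity, and fixed-point uniqueness are the standard route those references take, so this is essentially the same approach, now written out in full.
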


The proof of Theorem~\ref{mvtholo} relies on the compactness of $E_0$ and on the next lemma,  which is an immediate corollary of the Implicit Function Theorem.
Let $K>1$ be a hyperbolicity constant for $E_0$ for a suitable metric $\alpha$. In what follows, we denote by $|.|_\alpha$, $\D_\alpha$ the distance and disk with respect to that metric.  Up to reducing $K$ and $r$, we can find a $\delta$-neighborhood $\mathcal{N}_\delta$ of $E_0$ in $\P^1$ such that
\begin{equation}
|f_\lambda'(z)|_\alpha\geq K>1\text{ for all }(z,\lambda)\in\mathcal{N}_\delta\times\mathbb{B}(0,r)~.\label{eq:Misiurewicz}
\end{equation}

\begin{lemma}\label{lm:inversequantitative}
Under the assumption of Theorem~\ref{mvtholo}, there exist $\varepsilon>0$ and $0<\rho\leq r$, such that for all $z_0\in E_0$, there exists a map $f_{z_0,\lambda}^{-1}(w)$ which depends holomorphically on $(\lambda,w)\in \mathbb{B}(0,\rho)\times\D_\alpha(f_0(z_0),\varepsilon)$ and taking values in $\D_\alpha(z_0,\varepsilon)$ which satisfies
\begin{enumerate}
  \item $f^{-1}_{z_0,0}\big(f_0(z_0)\big)=z_0$,
  \item $f_\lambda\big(f^{-1}_{z_0,\lambda}(w)\big)=w$ for all $(\lambda,w)\in\mathbb{B}(0,\rho)\times\D_\alpha(f_0(z_0),\varepsilon)$, and
  \item $\left|\big(f^{-1}_{z_0,\lambda}\big)'(w)\right|_\alpha\leq\frac{1}{K}$ for all $(\lambda,w)\in\mathbb{B}(0,\rho)\times\D_\alpha(f_0(z_0),\varepsilon)$.
\end{enumerate}
\end{lemma}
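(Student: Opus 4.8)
The plan is to deduce Lemma~\ref{lm:inversequantitative} directly from the holomorphic Implicit Function Theorem applied fiberwise to the defining equation $f_\lambda(\zeta)=w$, using the uniform lower bound~\requ{eq:Misiurewicz} on $|f_\lambda'|_\alpha$ over $\mathcal{N}_\delta\times\B(0,r)$, together with a compactness argument in $z_0\in E_0$ to make the radii $\eps,\rho$ \emph{uniform}.

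First I would fix a point $z_0\in E_0$ and consider the holomorphic map $\Phi(\lambda,\zeta):=f_\lambda(\zeta)$ defined for $(\lambda,\zeta)$ in $\B(0,r)\times\mathcal{N}_\delta$ (working in local coordinates on $\P^1$ near $z_0$ and near $f_0(z_0)$, say using the chart in which the metric $\alpha$ is expressed). Since $z_0\in E_0\subset\mathcal{N}_\delta$ and $f_0'(z_0)\neq0$ by~\requ{eq:Misiurewicz}, the partial derivative $\partial\Phi/\partial\zeta(0,z_0)=f_0'(z_0)$ is invertible. The holomorphic Implicit Function Theorem then produces, near $(0,z_0,f_0(z_0))$, a holomorphic solution $\zeta=f^{-1}_{z_0,\lambda}(w)$ of $\Phi(\lambda,\zeta)=w$ with $f^{-1}_{z_0,0}(f_0(z_0))=z_0$; this gives conclusions (1) and (2) on some polydisk $\B(0,\rho_{z_0})\times\D_\alpha(f_0(z_0),\eps_{z_0})$, after shrinking so that the values $f^{-1}_{z_0,\lambda}(w)$ stay inside $\D_\alpha(z_0,\eps_{z_0})\subset\mathcal{N}_\delta$. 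For conclusion (3), differentiating the identity $f_\lambda(f^{-1}_{z_0,\lambda}(w))=w$ gives $(f^{-1}_{z_0,\lambda})'(w)=1/f_\lambda'(f^{-1}_{z_0,\lambda}(w))$, and since $f^{-1}_{z_0,\lambda}(w)\in\mathcal{N}_\delta$ the bound~\requ{eq:Misiurewicz} yields $|(f^{-1}_{z_0,\lambda})'(w)|_\alpha\leq 1/K$ directly — no further shrinking needed for this estimate once the image stays in $\mathcal{N}_\delta$.

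The one genuine point requiring care — and the main (mild) obstacle — is the \textbf{uniformity} of $\eps$ and $\rho$ in $z_0$: the naive IFT only gives radii depending on $z_0$. I would handle this by the standard quantitative version of the IFT: the size of the neighborhood on which the implicit function exists can be bounded below in terms of $\|\partial\Phi/\partial\zeta(0,z_0)^{-1}\|$, i.e. $1/|f_0'(z_0)|$, and of a modulus of continuity / second-order bound for $\Phi$ on a fixed neighborhood. Since $E_0$ is compact and $f_0'$ is continuous and non-vanishing on $\mathcal{N}_\delta$, we have $\inf_{z\in E_0}|f_0'(z)|_\alpha\geq K>0$; and since the family $(f_\lambda)$ is holomorphic, the relevant derivatives of $\Phi$ are uniformly bounded on the compact set $\overline{\B(0,r/2)}\times\overline{\mathcal{N}_{\delta/2}}$. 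These two uniform bounds feed into the quantitative IFT to produce $\eps>0$ and $0<\rho\leq r$ independent of $z_0\in E_0$. Equivalently, one may phrase it as a cheap compactness argument: cover $E_0$ by finitely many of the charts above, take $\eps,\rho$ to be the minimum of the finitely many $\eps_{z_0},\rho_{z_0}$ obtained at a finite net, and use that the solution branches agree on overlaps by uniqueness of the implicit function near a given point; this suffices because the statement only asserts the \emph{existence} of a branch $f^{-1}_{z_0,\lambda}$ for each $z_0$, with uniform radii.

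Finally I would record that the resulting $f^{-1}_{z_0,\lambda}$ is the \emph{inverse branch of $f_\lambda$ through} $z_0$ in the sense that it continuously deforms, as $\lambda$ varies, the local inverse of $f_0$ sending $f_0(z_0)$ to $z_0$; this is exactly the local datum Theorem~\ref{mvtholo} iterates along backward orbits in $E_0$ to build the holomorphic motion $h$, the contraction factor $1/K<1$ in~(3) being what guarantees that the composition of inverse branches converges. No estimate here is delicate beyond the bookkeeping of which chart one works in and ensuring all intermediate points remain in $\mathcal{N}_\delta$; the essential inputs are the uniform hyperbolicity~\requ{eq:Misiurewicz} and the holomorphic IFT, as claimed in the sentence preceding the lemma.
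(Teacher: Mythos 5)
Your proof is correct and follows exactly the approach the paper intends: the paper itself gives no detailed proof of this lemma, merely calling it ``an immediate corollary of the Implicit Function Theorem'' and pointing to the compactness of $E_0$; you have simply fleshed out that sketch, using the quantitative IFT together with the uniform bound~\requ{eq:Misiurewicz} on $\mathcal{N}_\delta\times\B(0,r)$ for the derivative estimate in item~(3), and compactness of $E_0$ for the uniformity of $\eps$ and $\rho$. The key identity $(f^{-1}_{z_0,\lambda})'(w)=1/f_\lambda'(f^{-1}_{z_0,\lambda}(w))$ does give $1/K$ as the bound because $|\cdot|_\alpha$ is a conformal derivative norm, so the reciprocal rule holds in the metric as well.
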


\subsection{Transversality for Misiurewicz maps}

It is well known that if $f$ is a $k$-Misiurewicz degree $d$ rational map but not a flexible Latt\`es map, then all periodic Fatou components of $f$ are attracting basins and $f$ does not carry any invariant line field on its Julia set. Moreover, there exists a positive integer $n_0\geq1$ such that the set
\[E_f:=\overline{\{f^n(c)\, ; n\geq n_0 \ \text{ and } \ c\in\mathcal{C}(f)\cap\J_f\}}\]
is a compact hyperbolic $f$-invariant set (see e.g.~\cite{Aspenberg1,Article1}).

 If $f\in\rat_d$ is $k$-Misiurewicz,  $\mathcal{C}(f)\cap \J_f=\{c_1,\ldots,c_k\}$ and, for all $n\in \N$ and all $i\ne j \leq k$, $f^n(v_j)\neq v_i$, where $v_i=f(c_i)$, we let $h:\B(f,r)\times E_f\to\P^1$ be the dynamical holomorphic motion of $E_f$. 
Since all the critical points in the Julia set of $f$ are simple ($v_i\neq v_j$), we may follow them holomorphically in the neighborhood of $f$, as well as the critical values: denote by $v_i(\lambda)$, $1 \leq i \leq k$, the corresponding critical values of $f_\lambda$.

Given a holomorphic curve $\lambda \mapsto f_\lambda \in \rat_d$ passing through $f=f_{\lambda_0}$, it will be convenient to denote by $\eta$ the meromorphic vector field on $\mathbb{P}^1$ defined by
$$\eta(z):=\frac{\frac{d}{d\lambda}_{|\lambda=\lambda_0} f_\lambda(z)}{f'(z)}.$$

Recall that the pullback of $\eta$ by $f^n$ is given in coordinates by $(f^n)^*\eta(z) = \frac{\eta \circ f^n(z)}{(f^n)'(z)}$. As before, let $\xi_n^i(\lambda):=f_\lambda^n(v_i(\lambda))$.

\begin{lemma}\label{lem:calculderivxi}
	Let $(f_\lambda)_{\lambda \in \Lambda}$ be a holomorphic curve of rational maps with a marked critical value
	$v_i(\lambda)$.
	Assume that the orbit of $v_i:=v_i(\lambda_0)$ 
	does not meet the critical set of $f:=f_{\lambda_0}$.
	We have, for all $n \geq 1$: : 
	$$\frac{1}{(f^n)'(v_i)}\frac{d}{d\lambda}_{|\lambda=\lambda_0} \xi_n^i(\lambda) = 
	\frac{d}{d\lambda}_{|\lambda=\lambda_0} v_i(\lambda) + \sum_{k=0}^{n-1} (f^k)^*\eta(v_i).$$
	Moreover, if $v_i$ lies in a hyperbolic set $E_f$ and $h_\lambda(v_i)$ denote its holomorphic motion, then
	$$\frac{d}{d\lambda}_{|\lambda=\lambda_0} h_\lambda(v_i) = - \sum_{n=0}^{+\infty} (f^n)^*\eta(v_i).$$
\end{lemma}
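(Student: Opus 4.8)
The plan is to prove the first identity by induction on $n$, using the chain rule together with the definition of $\eta$, and then to deduce the second identity by passing to the limit using the equivariance of the holomorphic motion.

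First I would establish the base case and the inductive structure. Writing $\xi_0^i(\lambda) = v_i(\lambda)$, the case $n=1$ amounts to the identity $\frac{d}{d\lambda} f_\lambda(v_i(\lambda)) = f'(v_i)\frac{d}{d\lambda}v_i(\lambda) + \frac{d}{d\lambda}_{|\lambda=\lambda_0} f_\lambda(v_i)$, where the last term, divided by $f'(v_i)$, is exactly $\eta(v_i) = (f^0)^*\eta(v_i)$; this is just the definition of $\eta$. For the inductive step, I would write $\xi_{n+1}^i(\lambda) = f_\lambda(\xi_n^i(\lambda))$ and differentiate:
\[
\frac{d}{d\lambda}_{|\lambda=\lambda_0} \xi_{n+1}^i(\lambda) = f'(\xi_n^i(\lambda_0))\cdot \frac{d}{d\lambda}_{|\lambda=\lambda_0}\xi_n^i(\lambda) + \left(\frac{d}{d\lambda}_{|\lambda=\lambda_0} f_\lambda\right)\!\big(\xi_n^i(\lambda_0)\big).
\]
Dividing by $(f^{n+1})'(v_i) = f'(f^n(v_i))\cdot(f^n)'(v_i)$ — which is legitimate precisely because the orbit of $v_i$ avoids $\mathcal{C}(f)$, so none of these derivatives vanish — the first term becomes $\frac{1}{(f^n)'(v_i)}\frac{d}{d\lambda}\xi_n^i(\lambda)$, to which the induction hypothesis applies, and the second term becomes $\frac{\eta(f^n(v_i))}{(f^n)'(v_i)} = (f^n)^*\eta(v_i)$. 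Summing gives the claimed formula for $n+1$.

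For the second identity, I would use that $h_\lambda$ is equivariant: $f_\lambda \circ h_\lambda = h_\lambda \circ f$ on $E_f$, hence $h_\lambda(f^n(v_i)) = \xi_n^i(h_\lambda(v_i)\text{-orbit})$... more precisely, $h_\lambda(v_i)$ satisfies $f_\lambda^n(h_\lambda(v_i)) = h_\lambda(f^n(v_i))$ for all $n$. Applying the first part of the lemma to the curve $\lambda \mapsto f_\lambda$ with marked point $h_\lambda(v_i)$ (whose derivative in $\lambda$ plays the role of $\frac{d}{d\lambda}v_i$), and using that $\frac{d}{d\lambda}_{|\lambda=\lambda_0}h_\lambda(f^n(v_i))$ stays bounded uniformly in $n$ while being divided by $(f^n)'(v_i)$, which grows like $K^n$ by hyperbolicity of $E_f$, the left-hand side tends to $0$ as $n\to\infty$. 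This forces $\frac{d}{d\lambda}_{|\lambda=\lambda_0}h_\lambda(v_i) + \sum_{k=0}^{\infty}(f^k)^*\eta(v_i) = 0$, which is the desired formula; the series converges absolutely since $|(f^k)^*\eta(v_i)| = |\eta(f^k(v_i))|/|(f^k)'(v_i)| \leq \|\eta\|_{E_f}\cdot K^{-k}$ with $\eta$ bounded on a neighborhood of the compact set $E_f$.

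The main obstacle — really the only subtle point — is justifying that $\frac{d}{d\lambda}_{|\lambda=\lambda_0} h_\lambda(f^n(v_i))$ is bounded uniformly in $n$. This follows from the fact that the holomorphic motion $h:\B(f,\rho)\times E_f \to \P^1$ is, by Theorem~\ref{mvtholo}, holomorphic in $\lambda$ with values in $\P^1$ and $E_f$ is compact, so $\{\lambda \mapsto h_\lambda(y)\}_{y\in E_f}$ is a normal family, giving a uniform bound on the $\lambda$-derivatives at $\lambda_0$ independent of $y = f^n(v_i) \in E_f$. Everything else is routine chain-rule bookkeeping, the only thing to keep track of being that divisions by derivatives of $f$ along the orbit are always permitted under the stated hypothesis that the orbit of $v_i$ misses $\mathcal{C}(f)$.
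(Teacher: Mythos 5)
Your proof of the first identity is identical in substance to the paper's: differentiate the recursion $\xi_{n}^i(\lambda)=f_\lambda(\xi_{n-1}^i(\lambda))$, divide by $(f^n)'(v_i)$, and sum by induction. Nothing to comment on there.

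For the second identity you take a genuinely different route, and it works. The paper differentiates the conjugacy relation $h_\lambda\circ f = f_\lambda\circ h_\lambda$ at $\lambda_0$ to obtain the functional equation $f^*\dot h - \dot h = \eta$ on $E_f$, then inverts $\mathrm{Id}-f^*$ by the Neumann series, using that $f^*$ is a strict contraction on bounded vector fields on $E_f$. You instead apply the first formula to the moving marked point $h_\lambda(v_i)$ --- legitimate, since the inductive proof of the first part only uses that the marked point depends holomorphically on $\lambda$ and has orbit avoiding $\mathcal{C}(f)$, not that it is a critical value --- and then let $n\to\infty$, killing the left-hand side because $\dot h$ is uniformly bounded on the compact $E_f$ while $(f^n)'(v_i)$ grows geometrically. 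Both arguments ultimately rest on the same two facts: geometric decay of $(f^k)^*\eta$ (i.e.\ contraction of $f^*$), and boundedness of $\dot h$ on $E_f$, which you make explicit via normality of the family $\{\lambda\mapsto h_\lambda(y)\}_{y\in E_f}$ and which the paper uses tacitly in invoking the Banach fixed-point argument. The paper's version is more compact and emphasizes the operator-theoretic structure; yours has the merit of exhibiting the second formula as a limit of the first and of spelling out where boundedness of $\dot h$ enters.
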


\begin{proof}
	To lighten the notations, we will denote by
	$\dot v_i$, $\dot f(z)$, etc. the derivatives $\frac{d}{d\lambda}_{|\lambda=\lambda_0} v_i(\lambda)$,
	$\frac{d}{d\lambda}_{|\lambda=\lambda_0} f_\lambda(z)$, etc.

	We have: $\xi_n^i(\lambda)=f_\lambda^n(v_i(\lambda)) = f_\lambda \circ f_\lambda^{n-1}(v_i(\lambda))$,
	so 
	$$\frac{d}{d\lambda}_{|\lambda=\lambda_0} \xi_n^i(\lambda) = \dot f \circ f^{n-1}(v_i) + f' \circ f^{n-1}(v_i) \cdot \frac{d}{d\lambda}_{|\lambda=\lambda_0} \xi_{n-1}^i(\lambda).$$
	Therefore
	$$\frac{\frac{d}{d\lambda}_{|\lambda=\lambda_0} \xi_n^i(\lambda)}{(f^n)'(v_i)} = \frac{\eta \circ f^{n-1}(v_i)}{(f^{n-1})'(v_i)} + \frac{\frac{d}{d\lambda}_{|\lambda=\lambda_0} \xi_{n-1}^i(\lambda)}{(f^{n-1})'(v_i)} = (f^{n-1})^*\eta(v_i) + \frac{\frac{d}{d\lambda}_{|\lambda=\lambda_0} \xi_{n-1}^i(\lambda)}{(f^{n-1})'(v_i)}.$$
	The first statement then follows by induction on $n$.
	
	Let us now prove the second statement. By Theorem \ref{mvtholo}, there exists a holomorphic family of 
	homeomorphisms $h_\lambda: E_f \to \mathbb{P}^1$, with $h_{\lambda_0}= \mathrm{Id}$ and such that 
	$$h_\lambda \circ f = f_\lambda \circ h_\lambda.$$
	Differentiating with respect to $\lambda$ at $\lambda=\lambda_0$, we get:
	$$\dot h \circ f = \dot f + f' \cdot \dot h,$$
	which we may rewrite as $f^*\dot h - \dot h = \eta$. 
	Since $E_f$ is hyperbolic, the operator $f^*$ is strictly contracting on the space of vector fields on $E_f$,
	and therefore
\[\dot h(v_i) = - \sum_{n=0}^{+\infty} (f^n)^*\eta(v_i),\]
ending the proof.
	\end{proof}

We can deduce the following from Theorem~B of~\cite{Astorg} (see the discussion after Theorem B in~\cite{Astorg}):

\begin{proposition}\label{prop:transM}
Let $f\in\rat_d$ be $k$-Misiurewicz,  $\mathcal{C}(f)\cap \J_f=\{c_1,\ldots,c_k\}$ and, for all $n\in \N$ and all $i\ne j \leq k$, $f^n(v_j)\neq v_i$. Then, there exists a local complex submanifold $\Lambda_f\subset\rat_d$ which contains $f$,
 such that the holomorphic map
	\[\lambda\in\Lambda_f\longmapsto \left(v_1(\lambda)-h_\lambda(v_1),\ldots,v_k(\lambda)-h_\lambda(v_k)\right)\in\C^k\]
	defines local coordinates at $f_0=f$ in $\Lambda_f$. Furthermore, the limits \[\tau_i:=\lim_{n\to\infty} \frac{D\xi_n^i(0)}{(f^n)'(v_i)}\]
	 exist as linear maps on $T_0 \Lambda_f$, for $1\leq i\leq k$, and $\tau_1,\ldots,\tau_k$ are linearly independent.
\end{proposition}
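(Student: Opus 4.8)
The plan is to produce the submanifold $\Lambda_f$ by a dimension count and then verify the coordinate and independence statements using the derivative formula of Lemma~\ref{lem:calculderivxi}. First I would recall that, by Theorem~B of~\cite{Astorg}, given that $f$ is $k$-Misiurewicz, not a flexible Latt\`es map, with $k$ simple critical points in $\J_f$ and no critical orbit relations among the $v_i$'s, the $k$ ``activity-one-forms'' obtained by differentiating $\lambda\mapsto v_i(\lambda)-h_\lambda(v_i)$ at $f$ span a $k$-dimensional subspace of $T_f^*\rat_d$; equivalently, the differential of $\Phi:\lambda\mapsto(v_1(\lambda)-h_\lambda(v_1),\dots,v_k(\lambda)-h_\lambda(v_k))$ at $f$ has rank $k$. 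I would then take $\Lambda_f$ to be any smooth $k$-dimensional complex submanifold of $\rat_d$ through $f$ transverse to $\ker d\Phi_f$, so that $d(\Phi|_{\Lambda_f})_f$ is invertible; by the inverse function theorem, $\Phi|_{\Lambda_f}$ is a biholomorphism from a neighbourhood of $f$ in $\Lambda_f$ onto a neighbourhood of $0$ in $\C^k$, which is exactly the local-coordinate claim. The ``discussion after Theorem~B in~\cite{Astorg}'' is presumably where this transversality/nondegeneracy statement is extracted, so I would cite it directly.

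Next I would establish existence of the limits $\tau_i$. Using the first identity of Lemma~\ref{lem:calculderivxi}, for $v\in T_0\Lambda_f$ one has
\[
\frac{D\xi_n^i(0)\cdot v}{(f^n)'(v_i)} = Dv_i(0)\cdot v + \sum_{k=0}^{n-1}(f^k)^*\eta_v(v_i),
\]
where $\eta_v$ is the meromorphic vector field associated to the direction $v$. The orbit of $v_i$ is eventually contained in the hyperbolic set $E_f$ (for $n\geq n_0$), so the pullback operator $f^*$ is strictly contracting there; hence the tail of the series $\sum_k (f^k)^*\eta_v(v_i)$ converges geometrically, and the limit $\tau_i(v)$ exists. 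In fact, combining with the second identity of Lemma~\ref{lem:calculderivxi}, one gets the clean closed form
\[
\tau_i(v) = Dv_i(0)\cdot v + \sum_{k=0}^{\infty}(f^k)^*\eta_v(v_i) = Dv_i(0)\cdot v - \frac{d}{d\lambda}\Big|_{\lambda=\lambda_0} h_\lambda(v_i)\cdot v = D\big(v_i - h_\cdot(v_i)\big)(0)\cdot v.
\]
Thus $\tau_i$ is nothing but the $i$-th component of $d\Phi_f$ restricted to $T_0\Lambda_f$. Since $d(\Phi|_{\Lambda_f})_f$ is invertible by construction, the $\tau_1,\dots,\tau_k$ are linearly independent linear forms on $T_0\Lambda_f$, which is the final assertion.

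The one delicate point — and the step I expect to be the main obstacle — is justifying that the orbit of each $v_i$ really does eventually land in a \emph{hyperbolic} invariant set on which $f^*$ contracts, and that the formula of Lemma~\ref{lem:calculderivxi} applies (i.e. the critical orbit never hits the critical set, so $\eta_v$ has no poles along it); this is exactly the role of the Misiurewicz hypothesis $\omega(c)\cap\mathcal C(f)=\varnothing$ and of the set $E_f$ introduced above, together with the ``not a flexible Latt\`es'' hypothesis guaranteeing $E_f$ is genuinely hyperbolic and there is no invariant line field. A secondary subtlety is pulling the rank-$k$ conclusion of~\cite{Astorg} into the precise form ``$d\Phi_f$ has rank $k$'', rather than the weaker statement that the critical points bifurcate independently; here I would lean on the identification $\tau_i = D(v_i - h_\cdot(v_i))(0)$ above, which shows the two are literally the same data, so independence of the $\tau_i$ is equivalent to the transversality furnished by~\cite{Astorg}. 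Everything else is the inverse function theorem plus geometric convergence of a contracted series.
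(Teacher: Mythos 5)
Your proposal is correct and matches the paper's approach: the paper itself gives no written proof of Proposition~\ref{prop:transM} and simply invokes Theorem~B of~\cite{Astorg} together with Lemma~\ref{lem:calculderivxi}, which is exactly the pair of ingredients you use. Your observation that the two identities of Lemma~\ref{lem:calculderivxi} combine to give $\tau_i = D\bigl(v_i(\cdot)-h_\cdot(v_i)\bigr)(0)$ is precisely what makes the two halves of the statement (the coordinate claim and the linear independence of the $\tau_i$) equivalent, so the transversality of~\cite{Astorg} applied to the $k$ active critical points, followed by choosing $\Lambda_f$ transverse to $\bigcap_i\ker\tau_i$ and invoking the inverse function theorem, is exactly the intended argument.
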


\subsection{The generalized large scale condition for $k$-Misiurewicz maps}

We now turn to the proof of Theorem~\ref{tm:Misbigscale}.

\begin{proof}[Proof of Theorem~\ref{tm:Misbigscale}]
 Consider the set 
	\[\Omega_n:= \left\{ \lambda\in\Lambda_f, \ \forall i\leq k,\   |h_\lambda(v_i) - v_i(\lambda)| \leq \frac{1}{|(f^n)'(v_i)|}\right\}, \]
	then, by Proposition~\ref{prop:transM}, it can written as $L_n^{-1}(\Omega)$ where \[L_n(t_1,\dots, t_k)= \left(\frac{t_1}{|(f^n)'(v_1)|}, \dots,\frac{t_k}{|(f^n)'(v_k)|} \right)  \]
	and $\Omega$ is some polydisk around $f$ in $\Lambda_f$. 
	For all $1 \leq i \leq k$, let $\phi_n^i: \Omega \to \C$ be defined by:
\[  \phi_n^i(\lambda) := \xi_n^i\left( \frac{\lambda}{|(f^n)'(v_i)|} \right) - \xi_n^i(0). \]
Let $\phi_n: \Omega \to \C^k$ be defined by $\phi_n:=(\phi_n^i)_{1 \leq i \leq k}$.

 We start by proving that the sequence $(\phi_n)_{n \in \N}$ is bounded, hence normal, on 
some small polydisk $ \tilde{\rho}\cdot\Omega$.
Let $i \leq k$, let us bound 
$|\xi_n^i(\lambda) - \xi_n^i(0)|$.
We have:
\begin{align}
\nonumber	|\xi_n^i(\lambda) - \xi_n^i(0)| &=  | f_\lambda^n(v_i(\lambda)) - f^n(v_i)|\\
									&\label{eq:controlnormal}\leq |f_\lambda^n(v_i(\lambda)) - f_\lambda^n(h_\lambda(v_i))|
									+ |f_\lambda^n(h_\lambda(v_i)) - f^n(v)|.
\end{align}

By chaining Lemma \ref{lm:inversequantitative},
there exists $\epsilon>0$ such that for all $\lambda$ small enough, for all  $n \in \N$,
there is an inverse branch $g_{n,\lambda}$ of $f_\lambda^n$ that is well-defined on 
$\D_\alpha(f_\lambda^n(h_\lambda(v_i)), \eps )$, and that maps $f_\lambda^n(h_t(v_i))$ to 
$h_\lambda(v_i)$. As the standard metric and the metric $\alpha$ are equivalent, we have that the map $g_{n,\lambda}$ is univalent on some disk $\D(f_\lambda^n(h_\lambda(v_i)), C \cdot\eps)$ for some constant $C$ that does not depend on $n$ nor $\lambda$. 
By Koebe's distortion theorem, the distortion of $g_{n,\lambda}$ is uniformly bounded on $\D(f_\lambda^n(h_\lambda(v_i)), C\cdot\eps/2)$.

We prove by induction on $n$ that there is a constant $0<\tilde{\rho}<1$ depending only on $f$ such that if $|h_\lambda(v_i) - v_i(\lambda)| \leq \frac{\tilde{\rho}}{|(f^n)'(v_i)|}$, then 
$|f_\lambda^n(v_i(\lambda)) - f_\lambda^n(h_\lambda(v_i))|\leq \frac{ C \varepsilon}{4}$. Assume it is true for $n-1$. Observe first that by Lemma 6.6 of \cite{Article1}, there exists $C_1>0$ depending only on $f$ and on the metric, such that we have:
\begin{equation}\label{6.6}
\max \left(  \frac{ |(f_\lambda^{n-1})'(h_\lambda(v_i))|}{|(f^{n-1})'(v_i)|}, \frac{|(f^{n-1})'(v_i)|}{ |(f_\lambda^{n-1})'(h_\lambda(v_i))|} \right) \leq e^{(n-1) C_1 \|\lambda\|} \leq 2,
\end{equation}
by our choice of $\lambda$. 
We can thus apply the induction hypothesis and the bound on the distortion of $g_{n-1,\lambda}$, and then \eqref{6.6} to find:
\begin{align*}
|f_\lambda^{n-1}(v_i(\lambda)) - f_\lambda^{n-1}(h_\lambda(v_i))| &\leq 2 |(f_\lambda^{n-1})'(h_\lambda(v_i))| \cdot 
|v_i(\lambda)-h_\lambda(v_i)|\\
& \leq 4 |(f^{n-1})'(v_i)| \cdot 
|v_i(\lambda)-h_\lambda(v_i)|.
\end{align*}
Let $M:=\sup_{\lambda \in \B(0,\rho)} \|f_\lambda'\|_{\infty}$ and $m:=  \|(f')^{-1}\|_{E_f,\infty}<+\infty$   we deduce:
\begin{align*}
|f_\lambda^{n}(v_i(\lambda)) - f_\lambda^{n}(h_\lambda(v_i))| &\leq M |f_\lambda^{n-1}(v_i(\lambda)) - f_\lambda^{n-1}(h_\lambda(v_i))|\\
                                                              &\leq 4M  |(f^{n-1})'(v_i)| \cdot 
                                                              |v_i(\lambda)-h_\lambda(v_i)|\\
&\leq 4 Mm |(f^{n})'(v_i)| \cdot |v_i(\lambda)-h_\lambda(v_i)| \\
                     &\leq  4 M m |(f^{n})'(v_i)| \cdot \frac{\tilde{\rho}}{|(f^n)'(v_i)|}                      \\
                     &\leq  \frac{ C\varepsilon}{4}
\end{align*}
for $\tilde{\rho}$ small enough, independent of $n$. In particular,
 \begin{equation*}
 |f_\lambda^n(v_i(\lambda)) - f_\lambda^n(h_\lambda(v_i))| = O(1).
 \end{equation*}
It remains to bound the second term of \eqref{eq:controlnormal}.
To do that, just note that 
$|f_\lambda^n(h_\lambda(v_i)) - f^n(v_i)| = |h_\lambda(f^n(v_i)) - f^n(v_i)|$, and that $h_\lambda$
extends continuously to a holomorphic motion of $\mathbb{P}^1$ by the $\lambda$-lemma, hence in particular
$h$ is uniformly continuous on $\B(f,r) \times E_f$.

Therefore $(\phi_n)$ is bounded, hence normal on $\tilde{\rho}\cdot\Omega$. If $\phi$ is a limit of a subsequence of $(\phi_n)$, then Proposition~\ref{prop:transM} implies that $D\phi(0)$ is invertible, hence that $\phi$ is open near $0$. 
So there is some polydisk $\D(0,2\delta)^k$ such that $\D(0,2\delta)^k \subset \phi( \tilde{\rho}\cdot\Omega )$
and therefore, for arbitrarily large $n$, $\D(0,\delta)^k \subset \phi_n(\tilde{\rho}\cdot \Omega)$.
Again, as  $D\phi(0)$ is invertible, its graph is vertical near the origin (restricting $\delta$ if necessary); by normality, this is also the case for $\phi_n$ for $n$ large enough which is exactly what we want.  
\end{proof}

\section{The large scale condition and good Collet-Eckmann maps}\label{sec:distortion}

In the present section, we prove Theorem~\ref{tm:bigscale}
and Theorem \ref{tm:abundance}. We start by recalling the conditions appearing in
Theorem \ref{tm:abundance}, which correspond to "good" Collet-Eckmann maps for which
we will be able to prove that they also satisfy the large scale condition.

	Let $f$ be a rational map, and denote by $\mathcal{C}(f)$ its critical set. 
Recall the Collet-Eckmann condition:

\begin{description}
	\item[Collet-Eckmann (CE)]
	there exist $\ga,\ga_0 > 0$ such that for all $v \in f(\mathcal{C}(f))$ and $n \geq 0$
	$$|(f^n)'(v)| \geq e^{n\ga - \ga_0}$$
\end{description}

and the so-called \emph{backward Collet-Eckmann condition}:
\begin{description}
	\item[Backward Collet-Eckmann (CE2)]
	for all $n \geq 0$ and $x \in f^{-n}(\mathcal{C}(f))$
	$$|(f^{n})'(x)| > e^{n\mu - \mu_0}.$$
\end{description}

Additionally, recall the following conditions appearing
in the papers \cite{Tsu} and \cite{Asp13}:

\begin{description}		
	\item[Basic Assumption (BA)] 
	there exists $\al > 0$ such that for all $v \in f(\mathcal{C}(f))$ and $n \geq 0$
	$$ \ln |f'(f^n(v))| > -n \al.$$
	
	\item[Free Assumption (FA)] 
	there exist $\eta, \iota > 0$ such that for all $v \in f(\mathcal{C}(f))$ and $n > 0$
	$$\sum_{j=0 \atop \dist (f^j(v), \mathcal{C}(f)) \leq \eta}^{n-1} \ln |f'(f^j(v))| > -n \iota.$$

	\item[Aspenberg's Basic Assumption (BA')] 
	there exists $\al > 0$ such that for all $v \in f(\mathcal{C}(f))$, $c \in \mathcal{C}(f)$ and $n \geq 0$
	$$ |f^n(v) - c| > e^{{-n \al}}$$
\end{description}

We will now describe a somewhat more technical key condition from \cite{Asp13} 
(definition 1.15), which we will soon
prove implies the condition (FA) with appropriate constants. Let $\gamma, \gamma_0>0$ and assume 
that $f$ satisfies CE with constants $\gamma,\gamma_0$.
Fix $0 < \be \ll \ga$ and $0 < \de \ll 1$. For some $v \in f(\mathcal{C}(f))$, we say that $f$ makes a \emph{deep return} at time $\nu$ if
$$\dist(f^{\nu}(v), \mathcal{C}(f)) < \de^{2}.$$
The corresponding \emph{bound period} $p$ is defined by
$$p := \min\{k \geq 0 \ :\ |f^{k+1}(f^{\nu}(v)) - f^{k+1}(c)| \geq e^{-\be k}\},$$
where $c$ is the closest critical point to $f^{\nu}(v)$. Only deep returns outside bound periods are considered. Thus, for a given critical value $v$ we can enumerate them
$$\nu_{0}(v) < \nu_{0}(v) + p_{0}(v) < \nu_{1}(v) < \ldots$$
The length of the \emph{free orbit} between two consecutive deep returns is in this case 
$$\mu_{i}(v) := \nu_{i}(v) - (\nu_{i-1}(v) + p_{i-1}(v)) \text{ and } \mu_{0}(v) := \nu_{0}(v).$$

We omit the definition of \emph{shallow returns} in \cite{Asp13}, as they are not needed here.

\begin{description}
	\item [Aspenberg's Free Assumption (FA')] 
	there exist $\de, \be > 0$ and $\tau \in (0,1)$ such that for all $v \in f(\mathcal{C}(f))$ and $s \geq 0$
	$$\sum_{i = 0}^{s} \mu_{i}(v) > (1 - \tau)(\nu_{s}(v) + p_{s}(v)) .$$
\end{description}

We will use the notation BA($\alpha$), CE($\gamma, \gamma_0$), FA($\eta, \iota$), etc. to refer to those conditions with the specified constants $\alpha, \gamma, \eta, \iota,$ etc.

\subsection{Getting a positive measure set of good parameters}\label{sec51}
This subsection is devoted to the proof of Theorem \ref{tm:abundance}.
In order to find a set of positive measure of good Collet-Eckmann rational maps,
our strategy is as follows: first, we deduce from the main result of \cite{Asp13} the existence
of a positive measure set of Collet-Eckmann rational maps satisfying all of the 
previous conditions with appropriate constants.
Then, we follow  Tsujii's presentation and improvement \cite{Tsu} of the Benedicks-Carleson theory for interval maps \cite{BenCar}. This will enable us to obtain precise distortion estimates for those 
good parameters, from which we will deduce that the large scale condition is satisfied.

From now on and through all of this section, $f_0$ is a fixed Collet-Eckmann rational map such
that each critical point of 
$f_0$ is simple. 
We can parametrize analytically a neighborhood $O$ of $f_0$ in parameter space
by means of an analytic map $F: O \times \P^1 \to \P^1$, and up to restricting $O$ we may assume that
each of the following conditions hold for a certain constant $\kappa>0$:

\begin{description}
	\item[(K1)] $\|F\|_{\calC^2} < \ka$.
	\item[(K2)] $\frac 1\ka < \frac{|f_\la'(z)|}{\dist(z, C(f_\la))} < \ka$, if $z \notin C(f_\la)$.
	\item[(K3)] $|\partial_z \log f_\lambda'(z)| < \frac{\kappa}{\dist(z, \mathcal{C}(f_\lambda))}$, if $z \notin \mathcal{C}(f_\lambda)$.
	\item[(K4)] $\dist(c_i(\la), c_j(\la')) > \frac 5\ka$, for all $\la' \in O$ and $i \neq j$.
	\item[(K5)] $\|c_i\|_{\calC^1} < \ka$, for all $i$.
	\item[(K6)] if $0 < r < \kappa^{-1}$ and $c \in \mathcal{C}(f_{\lambda})$, then 
	\[\D(c, \kappa^{-1} \sqrt{r}) \se f_{\lambda}^{-1}(\D(f_{\lambda}(c), r)) \se \D(c, \kappa \sqrt{r}).\] 
\end{description}

We start by recalling the definition of the distortion of an analytic map.
\begin{definition}
	\label{defDist}
	Let $\psi:\Om \to \C$ be holomorphic. We define its \emph{complex distortion} by
	$$\Dist(\psi,\Om) := \sup_{x,y \in \Om} \left| \log \psi'(x) - \log \psi'(y) \right| \in [0,\infty].$$
\end{definition}
We are only interested in small values of the distortion, and therefore $\log$ (referring 
to the principal branch of the logarithm) will always be well-defined on $\psi'(\Om)$.
Let $g_i:U_i \to U_{i+1}$ be holomorphic maps for $i=1,\ldots,n$. Observe that
\begin{equation}
\label{equSDist}
\Dist(g_n \circ \ldots \circ g_1, U_1) \leq \sum_{i=1}^n \Dist(g_i, U_i).
\end{equation}

Assume that $\psi : \D(a, r) \to \C$ has $\Dist(\psi, \D(a, r)) \leq 50^{-1}$. It is relatively easy to check that
\begin{equation}
\label{equImg}
\D\left(\psi(a), \frac {9r}{10} |\psi'(a)| \right) \se \psi(\D(a,r)) \se \D\left(\psi(a), \frac {11r}{10} |\psi'(a)| \right).
\end{equation}

\begin{definition}
	\label{defIDiam}
	Let $U \se \C$ be a connected open set. We define its \emph{inner diameter} as follows
	$$\de(U) := \sup_{x,y \in U} \inf_{x,y \in \Gamma \se U} \len(\Ga),$$
	where the $\inf$ is taken over all curves $\Ga \se U$ which connect $x$ to $y$.
\end{definition}

Combining with the definition of the distortion, we obtain the following upper bound
\begin{equation}
\label{equDDist}
\Dist(\psi, U) \leq \de(U) \sup_{z \in U} |\pa_z \log \psi'(z)|.
\end{equation}

Another natural property of the inner diameter is 

\begin{equation}
\label{equDID}
\de(\psi(U)) \leq \de(U) \sup_{z \in U} |\psi'(z)|.
\end{equation}

For any $\lambda \in O$, $n \in \mathbb{N}^*$ and $z \in \P^1 \sm \bigcup_{j=0}^{n-1} f_\lambda^{-j}(\mathcal{C}(f_\lambda))$, we define
\begin{equation}
\label{equAplus}
a^+=a^+(z,n,f_\la) := \left( 400 e \ka^2 \sum_{j=0}^{n-1} \frac{|(f_\la^j)'(z)|}{|f_\la'(f_\la^j(z) )|} \right)^{-1}.
\end{equation}
As an immediate consequence of the previous definition, we observe
\begin{equation} \label{equApd}
a^+(z,n; f_\la) \cdot |(f_{\la}^{n})'(z)| = \left( 400 e \ka^2 \sum_{j=0}^{n-1} \frac{1}{|(f_\la^{n-j})'(f_\la^j(z) )| \cdot |f_\la'(f_\la^j(z) )|} \right)^{-1}<\frac{1}{400e\ka}.
\end{equation}

\begin{lemma}
	\label{lemDist}
	Let $f$ be a rational map satisfying (K2, K3, K6).
	Let $\D(y,r) \se \D(z, a^+)$. Then for all $0 \leq j < m \leq n$
	$$\Dist\left( f_\la^{m-j}, f_\la^j(\D(y,r)) \right) < \frac r{100 a^+}.$$
\end{lemma}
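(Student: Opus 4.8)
\textbf{Proof plan for Lemma~\ref{lemDist}.}
The plan is to bound the distortion of the whole composition $f_\la^{m-j} = f_\la \circ \cdots \circ f_\la$ (with $m-j$ factors) restricted to $f_\la^j(\D(y,r))$ by summing the one-step distortions using \requ{SDist}, and to estimate each one-step distortion via \requ{DDist}, i.e.\ by the product of the inner diameter of the relevant domain and the sup of $|\pa_z \log f_\la'|$ there. First I would set $\Delta_i := f_\la^i(\D(y,r))$ for $0\le i< n$; the goal is $\sum_{i=j}^{m-1}\Dist(f_\la,\Delta_i) < \frac{r}{100 a^+}$. By \requ{DDist} and (K3), for each $i$ we have $\Dist(f_\la,\Delta_i) \le \de(\Delta_i)\cdot \sup_{z\in\Delta_i}\frac{\ka}{\dist(z,\mathcal{C}(f_\la))}$. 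So the two ingredients to control are the inner diameter $\de(\Delta_i)$ and the distance from $\Delta_i$ to the critical set.

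The key point is a bootstrap/induction on $i$ in the spirit of the classical Benedicks--Carleson argument: assuming the distortion of $f_\la^i$ on $\D(y,r)$ is small (say $<\tfrac1{100}$, hence $\le 50^{-1}$), \requ{Img} gives that $\Delta_i = f_\la^i(\D(y,r))$ is roughly a round disk, so $\de(\Delta_i) \le \tfrac{11}{10} r\,|(f_\la^i)'(\cdot)|$ (using also that $\D(y,r)\subset\D(z,a^+)$, so the base point can be compared to $z$), and more precisely $\de(\Delta_i) \lesssim r\,|(f_\la^i)'(z)|$ up to the bounded distortion factor. For the critical-set distance: since $\Delta_i$ is essentially a disk centered near $f_\la^i(z)$ of radius $\lesssim r|(f_\la^i)'(z)| \le a^+|(f_\la^i)'(z)| < \tfrac{1}{400e\ka}$ (by \requ{Apd}), and since by (K2) $\dist(f_\la^i(z),\mathcal{C}(f_\la))$ is comparable to $|f_\la'(f_\la^i(z))|$ up to $\ka$, one gets that $\dist(\Delta_i,\mathcal{C}(f_\la)) \ge \tfrac1{2\ka}|f_\la'(f_\la^i(z))|$ as soon as the radius of $\Delta_i$ is less than half that distance — which is guaranteed precisely by the $400 e \ka^2$ in the definition \requ{Aplus} of $a^+$. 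Plugging these into the bound for $\Dist(f_\la,\Delta_i)$ yields $\Dist(f_\la,\Delta_i) \lesssim \ka^2\, r\,\frac{|(f_\la^i)'(z)|}{|f_\la'(f_\la^i(z))|}$, and summing over $i$ from $j$ to $m-1$ (extending the sum to $0\le i\le n-1$) produces exactly $400 e\ka^2 r \sum_{i=0}^{n-1}\frac{|(f_\la^i)'(z)|}{|f_\la'(f_\la^i(z))|} = \frac{r}{a^+}$ up to the numerical constants, which we arrange to be $<\frac{r}{100 a^+}$ by tracking the factor $e$ and the $\tfrac{11}{10}$'s carefully (this is where the exact constant $400e$ is chosen). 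The induction closes because at each step the running distortion stays below $50^{-1}$, so \requ{Img} remains applicable.

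The main obstacle, I expect, is making the induction genuinely self-consistent: one needs the size estimate on $\Delta_i$ (via \requ{Img}) to control the distortion at step $i$, but \requ{Img} itself requires the accumulated distortion $\Dist(f_\la^i,\D(y,r))<50^{-1}$, which is what we are in the middle of proving. The resolution is the standard one — prove by induction on $i\le n$ the combined statement ``$\Dist(f_\la^i,\D(y,r)) < \sum_{l=0}^{i-1}(\text{one-step bound}) \le \frac{r}{100a^+} < \frac1{100}$ and $\de(\Delta_i)\le \frac{11}{10}r|(f_\la^i)'(z)|$'' — so that the hypothesis available at step $i$ (distortion so far $<50^{-1}$) is exactly what licenses the use of \requ{Img} to get the diameter bound, which in turn feeds \requ{DDist}/\requ{DID} for step $i+1$. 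A secondary technical point is the comparison of derivatives $|(f_\la^i)'(y)|$ versus $|(f_\la^i)'(z)|$ for $y\in\D(y,r)\subset\D(z,a^+)$, but this costs only a factor bounded by $e^{\Dist}\le e^{1/100}<2$, easily absorbed into the constants.
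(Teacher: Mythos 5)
Your proposal is correct and follows essentially the same route as the paper: sub-additivity \requ{SDist}, one-step bounds via \requ{DDist} and (K3), inner-diameter control via \requ{DID}/\requ{Img}, distance-to-$\mathcal{C}(f_\la)$ lower bound via (K2), and a bootstrap closing the loop because the running distortion stays below $10^{-2}$. The one refinement the paper makes that cleanly handles your ``secondary technical point'' (comparing $|(f_\la^i)'(y)|$ with $|(f_\la^i)'(z)|$ and pinning $f_\la^i(\D(y,r))$ near $f_\la^i(z)$) is to state the inductive hypothesis for \emph{all} disks $\D(y,r)\subseteq\D(z,a^+)$ simultaneously, so that it can be applied to $\D(z,a^+)$ itself to control $\diam(f_\la^j(\D(z,a^+)))/\dist(f_\la^j(z),\mathcal{C}(f_\la))$.
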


\begin{proof}
	We show by induction that for all $j=0,\ldots,n-1$
	\begin{equation}
	\label{equTemp1}
	\Dist\left(f_\la, f_\la^j(\D(y,r))\right) < 4 e \ka^2 r\frac{|(f_\la^j)'(z)|}{|f_\la'(f_\la^j(z))|},
	\end{equation}
	and conclude by the definition \eqref{equAplus} of $a^+$ and the bound \eqref{equSDist}. Let $0 \leq j < n$ and assume \eqref{equTemp1} holds for all $0 \leq i < j$ and all $\D(y,r) \se \D(z,a^+)$. Using \eqref{equSDist},
	$$\Dist(f_\la^j, \D(z,a^+)) \leq \sum_{i=0}^{j-1} \Dist\left(f_\la, f_\la^i(\D(z,a^+))\right) < 10^{-2},$$
	thus, by (K2)
	$$\frac{\diam(f_\la^j(\D(z,a^+)))}{\dist(f_\la^j(z), C(f_\la))} < 2 e^{0.01} \ka a^+ \frac{|(f_\la^j)'(z)|}{|f_\la'(f_\la^j(z))|} < 10^{-2}.$$
	As a consequence of the above inequalities, using bounds \eqref{equDDist}, (K3), (K2) and \eqref{equDID}, we obtain
\begin{align*}
\Dist(f_\la,f_\la^j(\D(y, r))) & < \ka \frac{\de(f_\la^j(\D(y, r)))}{\dist(f_\la^j(\D(y, r)), C(f_\la))}  
	\leq \ka \frac{\de(f_\la^j(\D(y, r)))}{\dist(f_\la^j(\D(z, a^+)), C(f_\la))}\\
	& <1.01 \ka \frac{\de(f_\la^j(\D(y, r)))}{\dist(f_\la^j(z), C(f_\la))}< 
	1.01 e^{0.01} \ka^2 \frac{|(f_\la^j)'(y)| \de(\D(y,r))}{|f_\la'(f_\la^j(z))|}\\
	& <e^{0.03} \ka^2  \frac{2r |(f_\la^j)'(z)|}{|f_\la'(f_\la^j(z))|},
\end{align*}
	which completes the inductive proof.
\end{proof}
%

For any set $A \se \P^1$ and any $r>0$, we denote the $r$-neighborhood of $A$ by
$$B(A,r) := \bigcup_{z \in A} \D(z,r)$$
and its punctured neighborhood by
$$B(A,r)^* := B(A,r) \sm A.$$ 
Let $C(f, \de) := B(\mathcal{C}(f), \de)$ and $C(f,\de)^* := B(\mathcal{C}(f),\de)^*$.

For $z \in C(f, \ka^{-1})^*$, we define
\begin{equation}
\label{equKx}
k(z) := \min\{k \geq 1\, ; \ \log|(f^k)'(f(c))| > 1 - 1.9 \log|f'(z)|\},
\end{equation}
where $c$ is the closest critical point of $f$ to $z$.

\begin{lemma}
	\label{lemKx}
	Let $\alpha, \gamma, \gamma_0, \mu,\mu_0>0$, and let $f$ be a rational map satisfying BA($\alpha$),
	CE($\gamma,\gamma_0$) and CE2($\mu,\mu_0$).
	If $\al < \frac \ga{200}$, there exist $\de_0 > 0$ such that for all $0 < \de < \de_0$ and $z \in C(f, \de)^*$
	\begin{enumerate}
		\item[$(1)$] $- \frac{\log |f'(z)|}{\log \ka} < k(z) + 1 < -2 \frac{\log |f'(z)|}{\ga}$,
		\item[$(2)$] $\log |(f^{k(z)+1})'(z)| > -0.9 \log|f'(z)| > \frac \ga 3(k(z)+1) + \ga_0 + \mu_0 + 1$,
		\item[$(3)$] $\log|f'(f^j(z))| > 0.1 \log|f'(z)|$ for all $0 < j \leq k(z)$,
		\item[$(4)$] $\left| \log \left| \frac{f^{j}(f(z)) - f^{j}(f(c))}{f(z) - f(c)} \right| - \log |(f^{j})'(f(c))|\right| \leq 1$, for all $0 \leq j \leq k(z)$.
	\end{enumerate}
\end{lemma}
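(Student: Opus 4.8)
The strategy is to extract all four estimates from the defining inequality \requ{Kx} by playing the Collet-Eckmann lower bound CE($\gamma,\gamma_0$) against the polynomial-type upper bound on derivatives coming from (K1--K6) and BA($\alpha$). The starting observation is that $z\in C(f,\de)^*$ means $\dist(z,c)<\de$ for the nearest critical point $c$, so by (K2) we have $|f'(z)|\asymp\dist(z,c)$, hence $-\log|f'(z)|\asymp-\log\dist(z,c)$ is large and positive when $\de$ is small; this is the quantity against which $k(z)+1$ is measured. First I would prove $(1)$. For the upper bound $k(z)+1<-2\log|f'(z)|/\ga$: since $f$ satisfies CE($\gamma,\gamma_0$), for every $k$ we have $\log|(f^k)'(f(c))|\ge k\ga-\ga_0$; plugging $k=k(z)$ shows that as soon as $k\ga-\ga_0>1-1.9\log|f'(z)|$, i.e.\ for $k$ slightly above $(1-1.9\log|f'(z)|+\ga_0)/\ga$, the stopping condition in \requ{Kx} is already met, which forces $k(z)$ (the \emph{minimal} such $k$) to be below that value; choosing $\de_0$ small so that the lower-order terms are absorbed gives the clean bound $-2\log|f'(z)|/\ga$. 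For the lower bound $k(z)+1>-\log|f'(z)|/\log\ka$: by minimality of $k(z)$, at the step $k=k(z)-1$ the stopping condition fails, so $\log|(f^{k(z)-1})'(f(c))|\le 1-1.9\log|f'(z)|$; combined with the trivial one-step upper bound $\log|f'|\le\log\ka$ everywhere (from (K1)) one gets $\log|(f^{k(z)})'(f(c))|\le\log\ka+1-1.9\log|f'(z)|$, and comparing this with the crude lower bound $\log|(f^{k(z)})'(f(c))|\ge -k(z)\log\ka$ (each factor is at least $\ka^{-1}$ by (K1)) yields a linear inequality in $k(z)$ that rearranges to the claim after shrinking $\de_0$.

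Next, $(2)$: the first inequality $\log|(f^{k(z)+1})'(z)|>-0.9\log|f'(z)|$ is obtained by the chain rule $\log|(f^{k(z)+1})'(z)|=\log|f'(z)|+\log|(f^{k(z)})'(f(z))|$, writing $\log|(f^{k(z)})'(f(z))|$ as $\log|(f^{k(z)})'(f(c))|$ plus an error that is $O(1)$ by part $(4)$ (applied at $j=k(z)$; note the circularity is only apparent, since $(4)$ will be proved independently of $(2)$), and then using that $\log|(f^{k(z)})'(f(c))|>1-1.9\log|f'(z)|$ by the defining stopping condition \requ{Kx}; this gives $\log|(f^{k(z)+1})'(z)|>\log|f'(z)|+1-1.9\log|f'(z)|-O(1)=-0.9\log|f'(z)|-O(1)$, and absorbing the $O(1)$ by smallness of $\de_0$ (hence largeness of $-\log|f'(z)|$) yields $>-0.9\log|f'(z)|$. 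The second inequality in $(2)$, namely $-0.9\log|f'(z)|>\tfrac\ga3(k(z)+1)+\ga_0+\mu_0+1$, follows from part $(1)$: since $k(z)+1<-2\log|f'(z)|/\ga$, we have $\tfrac\ga3(k(z)+1)<-\tfrac23\log|f'(z)|$, and $-0.9\log|f'(z)|-(-\tfrac23\log|f'(z)|)=-\tfrac{0.9-2/3}{1}\log|f'(z)|=-\tfrac{7}{30}\log|f'(z)|\to+\infty$, so it dominates the fixed constant $\ga_0+\mu_0+1$ once $\de_0$ is small.

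For $(3)$: here BA($\alpha$) enters. We know $\log|f'(f^j(f(c)))|>-(j{+}1)\alpha$ along the critical value orbit (BA), and by part $(4)$ the orbit of $f(z)$ stays comparable to the orbit of $f(c)$ for $0\le j\le k(z)$, so $\log|f'(f^j(z))|\ge\log|f'(f^{j-1}(f(c)))|-O(1)>-j\alpha-O(1)$. Since $j\le k(z)$ and by $(1)$ $k(z)<-2\log|f'(z)|/\ga$, we get $-j\alpha>\tfrac{2\alpha}{\ga}\log|f'(z)|>0.1\log|f'(z)|$ provided $2\alpha/\ga<0.1$, which holds because $\alpha<\ga/200$; absorbing the $O(1)$ with $\de_0$ small finishes it.

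The genuinely load-bearing step is $(4)$, and I would prove it first, before all the others, since $(2)$ and $(3)$ quote it. The bound $\bigl|\log\bigl|\tfrac{f^j(f(z))-f^j(f(c))}{f(z)-f(c)}\bigr|-\log|(f^j)'(f(c))|\bigr|\le 1$ says the finite difference quotient of $f^j$ between the two nearby points $f(z)$ and $f(c)$ is comparable, up to a factor $e$, to the derivative at $f(c)$; this is exactly a bounded-distortion statement. The mechanism is the "Kx copies a prefix of the critical orbit" phenomenon advertised in the introduction: apply Lemma~\ref{lemDist} with the ball $\D(y,r)$ taken to contain both $f(z)$ and $f(c)$ — of radius $r\asymp|f(z)-f(c)|\asymp\dist(z,c)^2\asymp|f'(z)|^2$ by (K6) — inside the disk $\D(f(c),a^+(f(c),k(z)))$, and check that for $j\le k(z)$ this ball does not come within distance $\asymp\dist(f^j(f(c)),\mathcal C(f))$ too close to the critical set, using BA($\alpha$) (to bound from below $\dist(f^j(f(c)),\mathcal C)$ via (K2)) together with the CE growth (to control the sum defining $a^+$). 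The delicate point, and the one I expect to be the main obstacle, is verifying the inclusion $\D(y,r)\subset\D(f(c),a^+)$: this requires that $a^+(f(c),k(z))=\bigl(400e\ka^2\sum_{j<k(z)}|(f^j)'(f(c))|/|f'(f^j(f(c)))|\bigr)^{-1}$ is not too small compared to $|f'(z)|^2$, i.e.\ that the sum $\sum_{j<k(z)}|(f^j)'(f(c))|/|f'(f^j(f(c)))|$ does not blow up; one controls it by splitting off the last term, bounding $1/|f'(f^j(f(c)))|$ from (K2) and BA($\alpha$), using the CE lower bound on $|(f^{k(z)-j})'(\cdot)|$ in the reciprocals as in \requ{Apd}, and finally invoking the upper bound on $k(z)$ from part $(1)$ — so $(1)$ must be proved first too, and in fact $(1)$ only used CE and (K1), with no reference to $(4)$, so the logical order $(1)\Rightarrow(4)\Rightarrow(2),(3)$ is consistent and non-circular. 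Once Lemma~\ref{lemDist} applies, \requ{equImg}, or rather the estimate \requ{Temp1} it is built from, gives distortion $<1/100$, which converts the finite difference quotient to the derivative with multiplicative error well under $e$, establishing $(4)$ with room to spare.
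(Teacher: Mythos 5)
Your overall plan matches the paper's: derive $(1)$ from CE and (K1), verify the inclusion $\dist(f(z),v)<a^+(v,k(z);f)$ where $v=f(c)$ is the nearest critical value (using BA with $\al<\ga/200$, the stopping condition together with item $(1)$, and (K2)/(K6) for $\dist(f(z),v)\asymp|f'(z)|^2$), and then read $(2)$--$(4)$ off Lemma~\ref{lemDist} applied at $v$. This is exactly the paper's chain of reasoning, and you correctly identify the inclusion $\D(y,r)\se\D(v,a^+(v,k(z);f))$ as the crux.

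The one genuine gap is your argument for the lower bound in $(1)$. You invoke a ``crude lower bound $\log|(f^{k(z)})'(f(c))|\geq -k(z)\log\ka$, each factor being at least $\ka^{-1}$ by (K1).'' But (K1) is a $\calC^2$ norm bound and only gives the \emph{upper} bound $|f'|\leq\ka$; there is no uniform lower bound on $|f'|$, which vanishes at critical points (a lower bound along the critical value orbit is precisely what BA provides, and that bound is $e^{-j\al}$, not $\ka^{-1}$). Moreover, even granting such a lower bound, comparing it with your derived upper bound $\log|(f^{k(z)})'(v)|\leq\log\ka + 1-1.9\log|f'(z)|$ rearranges to $(k(z)+1)\log\ka\geq 1.9\log|f'(z)|-1$, whose right-hand side is negative, so it imposes no constraint on $k(z)$ at all. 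The correct route is shorter and runs the other way, using the stopping condition \emph{holding} at $k(z)$ rather than its failure at $k(z)-1$: by \eqref{equKx} one has $\log|(f^{k(z)})'(v)|>1-1.9\log|f'(z)|>-\log|f'(z)|$, while (K1) gives the upper bound $\log|(f^{k(z)})'(v)|\leq k(z)\log\ka$; combining yields $k(z)>-\log|f'(z)|/\log\ka$ directly. (The failure of the stopping condition at $k<k(z)$, which you try to use here, is what gives the \emph{upper} bound in $(1)$ together with CE, and there your argument is fine.)

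A softer point: for $(2)$ and $(3)$ you appeal to $(4)$ as a black box, but $(4)$ controls only the finite-difference quotient $\bigl(f^j(f(z))-f^j(v)\bigr)/(f(z)-v)$, not the pointwise derivatives $|(f^{k(z)})'(f(z))|$ (needed for $(2)$) or $|f'(f^{j-1}(f(z)))|$ (needed for $(3)$). What actually does the job is the full distortion control of Lemma~\ref{lemDist}, namely $\Dist(f^{m-j},f^j(\D(y,r)))<r/(100a^+)\leq 10^{-2}$, which controls all these quantities simultaneously and from which $(4)$ also drops out; once the inclusion in $\D(v,a^+)$ is verified, that lemma delivers $(2)$, $(3)$ and $(4)$ at once, as the paper states.
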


\begin{proof} Let $c$ be the critical point which is closest to $z$ and $v := f(c)$. Claim (1) follows from (K1) and (CE). From (BA) we deduce that
	$$ \log|f'(f^i(v))| > -k(z) \frac \ga{200}, \text{ for all } 0 \leq i \leq k(z),$$
	provided $\de$ is sufficiently small and thus $k(z)$ is large.
	Combined with claim (1), we obtain
	$$\log|f'(f^i(v))| > 10^{-2} \log|f'(z)|, \text{ for all } 0 \leq i \leq k(z).$$
	By (K2)
	\begin{equation}
	\label{equK3}
	\dist(f(z), v) < \ka^3 |f'(z)|^2.
	\end{equation}
	
	By the definition \eqref{equAplus}, we immediately obtain
	\begin{equation} \label{equAMax}
	a^+(z,n; f) \geq \left( 400 e \ka^2 n \max_{0 \leq j < n} \frac{|(f^j)'(z)|}{|f'(f^j(z) )|} \right)^{-1}.
	\end{equation}
	Combining the previous three inequalities with the definition \eqref{equKx} of $k(z)$, we obtain
	\begin{equation}
	\label{equAP}
	\dist(f(z), v) < a^+(v, k(z); f).
	\end{equation}
	All other claims follow from Lemma~\ref{lemDist} in $v$.
\end{proof}

\begin{remark}\normalfont
	An inspection of the proof of Lemma \ref{lemKx} shows that we did not in fact use the 
	property CE2. However, in further applications of this lemma, we will apply it to rational 
	maps that do satisfy CE2, and we will need the estimate from item (2) in which $\mu_0$
	appears.
\end{remark}

We are now able to reconcile the difference in the definition of the conditions 
FA and FA': 

\begin{lemma}
	\label{lemFA}
	Let $\gamma,\gamma_0, \mu,\mu_0>0$, let $\alpha<\gamma/200$ and let $f_\lambda \in O$ satisfying 
	CE($\gamma,\gamma_0$), CE2($\mu,\mu_0$), BA'($\alpha$) and (K1-6). Let $\tau>0$.
	There exist $\hat{\delta}>0$ and $\hat \beta>0$ such that if $f_\lambda$ satisfies FA'($\delta, \beta, \tau$) 
	with $\delta<\hat \delta$ and $\beta<\hat{\beta}$,
	then $f_\lambda$ satisfies FA($\eta, \iota$) with $\eta:=\delta^2$ and 
	$\iota:=\tau \log \kappa \cdot \max \left( 1,  3 \frac{\beta+\log \kappa}{\gamma+\beta})-1  \right)$.
\end{lemma}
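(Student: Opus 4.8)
The plan is to compare the "free part" of the critical orbit as measured by the two conditions FA and FA'. The left-hand side of FA$(\eta,\iota)$ counts $\log|f'(f^j(v))|$ precisely over those times $j<n$ at which the orbit is close to a critical point, i.e. $\dist(f^j(v),\mathcal{C}(f))\le\eta$. With the choice $\eta=\delta^2$, such a time $j$ is exactly a \emph{deep return} time in Aspenberg's vocabulary. So the strategy is: show that every close-approach time $j$ contributing a (negative) term to the sum is either one of the enumerated deep return times $\nu_i(v)$, or lies inside a bound period $[\nu_i(v),\nu_i(v)+p_i(v))$ following one; and on each such time, estimate $\log|f'(f^j(v))|$ from below using Lemma~\ref{lemKx}(2)--(3) together with the Collet-Eckmann hypothesis. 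Then FA'$(\delta,\beta,\tau)$, which asserts that the total free orbit length up to stage $s$ is at least $(1-\tau)(\nu_s(v)+p_s(v))$, bounds the total length of the non-free part (returns plus bound periods) by $\tau(\nu_s(v)+p_s(v))$, hence by $\tau n$ up to the stage reaching $n$; this is where the factor $\tau$ in $\iota$ comes from.

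Concretely, I would proceed as follows. First fix notation: for $j<n$ with $\dist(f^j(v),\mathcal{C}(f))\le\delta^2$, let $c$ be the closest critical point and $z:=f^{j-1}(v)$ if needed, or rather treat $f^j(v)$ directly as a point in $C(f,\delta)^*$; apply Lemma~\ref{lemKx} to $f^j(v)$ (legitimate since $\delta<\delta_0$ and $\alpha<\gamma/200$). This gives a bound period length $k(f^j(v))$ controlled by $-\log|f'(f^j(v))|$, and via item~(1), $k(f^j(v))+1<-2\log|f'(f^j(v))|/\gamma$, i.e. $-\log|f'(f^j(v))|>\tfrac{\gamma}{2}(k(f^j(v))+1)$. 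Second, I would argue that the bound period $p$ of FA' and $k(z)$ of Lemma~\ref{lemKx} are comparable: both are defined by the first time the orbit of $f^j(v)$ separates from the orbit of the critical value $v$ by a definite amount ($e^{-\beta k}$ for FA', or implicitly via the distortion estimate in Lemma~\ref{lemKx}), so $p$ and $k(f^j(v))$ differ by at most a multiplicative constant depending on $\beta,\gamma,\log\kappa$ — this is exactly where the factor $\max(1,3(\beta+\log\kappa)/(\gamma+\beta))$ is produced. Third, I would sum: over all close-approach times $j<n$, each contributes $\log|f'(f^j(v))|\ge -\log\kappa\cdot(\text{something}\le p_i(v)+1)$ by (K2) (since $|f'|\ge\kappa^{-1}\dist(\cdot,\mathcal C)$ is not directly a lower bound, but on bound-period times we instead use Lemma~\ref{lemKx}(3): $\log|f'(f^{j'}(v))|>0.1\log|f'(f^{\nu_i}(v))|$, summable against the total bound-period length). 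The grand total of negative contributions is then bounded by $-\log\kappa$ times the total non-free length, which FA' caps at $\tau\cdot(\nu_s(v)+p_s(v))$, and choosing $s$ so that $\nu_s(v)+p_s(v)$ straddles $n$ gives the claimed $-n\iota$ with $\iota=\tau\log\kappa\cdot\max(1,3(\beta+\log\kappa)/(\gamma+\beta))$ (the subtracted $1$ absorbing boundary/rounding terms).

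The constants $\hat\delta$ and $\hat\beta$ enter to guarantee: that $\delta^2<\delta_0$ so Lemma~\ref{lemKx} applies; that $\beta\ll\gamma$ so the comparison of bound periods is valid and the expansion estimate Lemma~\ref{lemKx}(2) dominates $\beta$; and that deep returns with $\dist<\delta^2$ really are the only times with $\dist\le\eta=\delta^2$, while shallow returns (excluded from FA' but with $\dist$ possibly between $\delta^2$ and $\delta$) do not occur at scale $\le\delta^2$ — one must check the enumeration $\nu_0<\nu_0+p_0<\nu_1<\cdots$ exhausts all times with $\dist(f^j(v),\mathcal C(f))<\delta^2$ lying outside previous bound periods, which is essentially the definition.

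I expect the main obstacle to be the comparison between the two notions of bound period (the $p$ from FA' versus $k(z)$ from Lemma~\ref{lemKx}) and the careful bookkeeping of which times are "free", "return", or "bound", so that the counting argument produces precisely the stated constant $\iota$ rather than merely a constant of the right shape. A secondary subtlety is that FA' is stated with hypothesis BA'$(\alpha)$ (the $|f^n(v)-c|>e^{-n\alpha}$ form) whereas Lemma~\ref{lemKx} uses BA$(\alpha)$ (the $\ln|f'(f^n(v))|>-n\alpha$ form); by (K2) these are essentially equivalent up to constants absorbed into $\gamma_0$-type terms, but this equivalence should be invoked explicitly. Everything else is routine distortion estimation using \eqref{equSDist}, Lemma~\ref{lemDist}, and the $\mathcal{C}^2$-control (K1)--(K6).
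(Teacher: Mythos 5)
Your high-level plan is the right one: the terms contributing to the FA sum at scale $\eta=\delta^2$ occur only at deep returns and during their bound periods, FA' caps the total non-free length by $\tau(\nu_s+p_s)$, and Lemma~\ref{lemKx} supplies the local expansion estimates. But there is a concrete gap in how you propose to bound the negative contributions accrued \emph{inside} a bound period $[\nu_i,\nu_i+p_i)$.

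You suggest handling the returns inside the bound period term-by-term via Lemma~\ref{lemKx}(3), which gives $\log|f'(f^{j}(z))|>0.1\log|f'(z)|$ only for $0<j\le k(z)$. However, the paper proves (and needs) the strict inequality $k_0+1<p$ where $k_0:=k(f^{\nu}(v'))$, so the bound period is strictly longer than the range covered by Lemma~\ref{lemKx}(3); the returns with $k_0<j\le p$ are left unestimated by your argument. The paper instead proceeds \emph{globally} over the whole bound period: it writes $\Sigma:=\Sigma_1+\Sigma_2=\log|(f^p)'(z_0)|$, uses the defining inequality $|f^p(z_0)-f^p(v)|\ge e^{-\beta p}$ together with the distortion on $V$ (the nested sets $W\subset U\subset V$ and $\Dist(f^k,V^k)<1/50$, which are where the real technical work lies and which your sketch does not address) to get $\Sigma+2\log|f'(z)|>-p\beta-3\log\kappa$, and separately derives the length bound $p<-\tfrac{3}{\gamma+\beta}\log|f'(z)|$ from (CE), (K2) and the definition of $p$. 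Combining these yields exactly $-\Sigma_1<-\log|f'(z)|\left(3\tfrac{\beta+\log\kappa}{\gamma+\beta}-1\right)$, which is then summed against FA' to produce the stated $\iota$. Your alternative, that ``$p$ and $k(z)$ differ by a multiplicative constant'' and that one can proceed term-by-term, is neither proved nor sufficient: the factor $3\tfrac{\beta+\log\kappa}{\gamma+\beta}$ comes from the chain-rule identity over the entire bound period, not from comparing the two notions of bound period.

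You correctly flag the need to establish $k_0+1<p$ (the paper does this via Lemma~\ref{lemKx}(1),(4), (K6) and the choice $\hat\beta=0.05\gamma$), and you correctly flag the BA/BA' translation via (K2). But as it stands the proposal does not bound the contributions from the part of the bound period beyond time $k_0$, and it bypasses the distortion bookkeeping ($W\subset U\subset V$, $\Dist(f^k,V^k)<1/50$) that makes the global estimate possible. That step is the crux, so this is a genuine gap rather than a shortcut.
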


\begin{proof}
	Let $\hat{\delta}^2:=\delta_0$ from Lemma~\ref{lemKx}, and let $\hat \beta:=0.05 \gamma$.
	Let $f:=f_\lambda$ be a rational map satisfying all the above conditions and 
	FA'($\beta,\delta, \tau$) with $\delta<\hat \delta$ and $\beta < \hat \beta$.
	Let $\nu$ be a deep return for a critical value $v'$, i.e.
	$\dist(f^\nu(v'),c) \leq \delta^2$ for some critical point $c$. Let $v=f(c)$.
	
	
	Let $k_0:=k(f^\nu(v'))$ and $z_0=f^{\nu+1}(v')$. Then, by \eqref{equAP},  $z_0 \in D:= \mathbb{D}(v,a^+(v, k(f^\nu(v'))))$. We claim that 
	\begin{equation}\label{eq:k<p}
	k_0 + 1 < p	
	\end{equation}
	where $p$ is the length of the bound period.
	In order to prove the claim, we need to prove that for all $j \leq k_0$, $|f^{j+\nu}(v')-f^j(c)|<e^{-\beta j}$ (by definition of $p$).
	Let $z:=f^\nu(v)$.
	
	By  Lemma~\ref{lemKx}(4), we have that for all $j \leq k_0$, 
	\begin{equation*}
	|f^{j+\nu}(v')-f^j(c)| \leq e |(f^j)'(f(c))| \cdot |f(z)-f(c)|
	\end{equation*}
	and by (K6), we have $\dist(f(z),f(c))\leq \kappa \dist(z,c)^2 \leq \kappa^3 |f'(z)|^2$. 
	By definition of $k_0$, we also have $|(f^j)'(f(c))| \leq e \cdot |f'(z)|^{-1.9}$.
	Therefore we have
	\begin{equation*}
	|f^{j+\nu}(v')-f^j(c)| \leq \kappa^3 |f'(z)|^{0.1}.
	\end{equation*}
	On the other hand, by Lemma~\ref{lemKx}(1),
	\begin{equation*}
	e^{-\beta j} \geq e^{-\beta k_0} \geq |f'(z)|^{2\beta/\gamma}.
	\end{equation*}
	By the choice of $\hat \beta$, we have $\frac{2\beta}{\gamma}\leq 0.1$, and therefore, 
	up to taking an even smaller $\hat \delta$, we have proved the claim (\ref{eq:k<p}).
	
	We consider the following sets:
	\begin{eqnarray*}
		V_k&:= D\cap f^{-k}(\mathbb{D}(f^k(v),3e^{-k\beta })), \ &V^k:= \cap_{j=k_0}^k V_j, \ V:= V^p   \\
		U_k&:= D\cap f^{-k}\left(\mathbb{D}\left(v,\frac{2e^{-k\beta}}{|(f^k)'(v)| }\right)\right), \ &U^k:= \cap_{j=k_0}^k U_j, \  U:= U^p \\
		W_k&:= D\cap f^{-k}(\mathbb{D}(f^k(v),e^{-k\beta })), \ &W^k:= \cap_{j=k_0}^k W_j, \ W:= W^p 
	\end{eqnarray*}
	Let us show that for all  $k_0 \leq k \leq p$
	\begin{equation}
	\label{equP2}
	\Dist(f^k, V^k) < \frac 1{50}.
	\end{equation}
	By inequality \eqref{equSDist}:
	\begin{align*}
	\Dist(f^k, V^k) &\leq \Dist(f^{k_0}, V^k) + \sum_{j=k_0}^{k-1} \Dist(f,  f^j(V^k))\\
	&\leq  \Dist(f^{k_0}, D) + \sum_{j=k_0}^{k-1} \Dist(f, \mathbb{D}(f^j(v),3e^{-j\beta })  ).
	\end{align*}
	By Lemma~\ref{lemDist}, the first term is $<10^{-2}$. By Lemma 2.1 in \cite{Asp13}, it is enough to show that for all $y \in \mathbb{D}(f^j(v),3e^{-j\beta })$
	$$\sum_{j=k_0}^{k-1}\left| \frac{f'(y)}{f'(f^{j}(v))} - 1\right| < 10^{-3}.$$
	By (BA) and (K1), for  $y \in \mathbb{D}(f^j(v),3e^{-j\beta })$, one has: 
	\begin{align*}
	\left| \frac{f'(y)}{f'(f^{j}(v))} - 1\right|  <    \kappa 3 e^{-j (\beta -\alpha)}.
	\end{align*}	
	By Lemma~\ref{lemKx}(1) and (K2), $k_0 > -\frac{\log |f'(f^\nu(v))|}{\log \ka} - 1 >  -\frac{2\log \de}{\log \ka} - 2$. So, by Lemma 2.1 in \cite{Asp13}:
	\begin{align*}
	\sum_{j=k_0}^{k-1}\left| \frac{f'(y)}{f'(f^{j}(v))} - 1\right|  &< \sum_{j = k_0}^{k-1} \ka e^{-j(\be - \al)} \\
	&< \frac{3\ka e^{-k_0(\be-\al)}}{1 - e^{-(\be - \al)}} < \frac{\ka e^2 \de^{\frac{2(\be-\al)}{\log \ka}}}{1 - e^{-(\be - \al)}}
	\end{align*}
	which completes our argument, as $\de$ can be taken arbitrarily small. 
	
	We show that $ U^k \subset V^k$ for all $k_0\leq k \leq p$. If not, let $k_0 < k$ such that there is $x \in U^k\backslash V^k$. Taking $k$ and $x$ minimal, we assume that $[x,v]\in V^{k-1}$, $[v,x[ \subset V^k$ and $x \in \partial V_k$. In particular, 
	\[\Dist(f^k, [v,x]) \leq \frac{1}{50}    \]   
	hence, as $x \in U_k$:
	\[3e^{-k\beta}= |f^k(x)-f^k(v)|\leq 1.03 |x-v|. |(f^k)'(v)|\leq 2.06e^{-k\beta}, \]
	a contradiction.
	
	We show  now that $W^k \subset U^k$ for all $k_0\leq k \leq p$. If not, let $k_0 < k$ and $x \in W^k$ minimal, such that $[x,v]\in U^{k}\subset V^k$, $[v,x[ \subset V^k$ and $x \in \partial U_k$. In particular, 
	\[\Dist(f^k, [v,x]) \leq \frac{1}{50}    \]   
	hence, as $x \in \partial U_k$:
	\[e^{-k\beta} >|f^k(x)-f^k(v)|\leq 0.97 |x-v|. |(f^k)'(v)|= 1.94e^{-k\beta}, \]
	a contradiction.
	
	In particular, we have $W\subset U \subset V$ therefore $[v,z_0] \subset V$ which implies that $\forall k_0\leq k \leq p$:
	\begin{equation}\label{equDistonV}
	0.97.|z_0-v|.|(f^k)'(v)|< |f^k(z_0)-f^k(v)|   < 1.03.|z_0-v|.|(f^k)'(z_0)|
	\end{equation}

	We have seen that $p > k_0 + 1 > -\frac{\log|f'(f^\nu(v))|}{\log\ka}$. Thus, by (FA'):
	\begin{equation}
	\label{equBFA}
	\sum_{i=0}^{s} -\log|f'(f^{\nu_i(v)}(v))| < \log \ka \sum_{i=0}^{s} p_i(v) < \tau\log\ka  (\nu_s(v) + p_s(v)).
	\end{equation}
	By inequalities (K2) and \requ{DistonV} applied for $j=p-1$, (CE) and the definition of $p$,
	$$\ka^{-2}|f'(z)|^2 < |f(z)-f(c)| < e^{\ga + \ga_0 - p(\ga+\be) + 1},$$
	thus
	$$p < -\frac 3{\ga + \be} \log|f'(z)|.$$
	Let $\Si_1$ and $\Si_2$ be the sums of $\log|f'(f^j(z))|$ respectively over the sets
	$$\{0 < j \leq p\,:\, f^j(z) \in C(f, \de^2)\} \text{ and } \{0 < j \leq p\,:\, f^j(z) \notin C(f, \de^2)\}.$$
	Observe that $\Si := \Si_1 + \Si_2 = \log|(f^p)'(z_0)|$. By (K2), inequalities  \requ{DistonV} and \requ{Img} and the definition of $p$, we have
	$$\Si + 2\log|f'(z)| > \Si + \log|f(z)-f(c)| - 2\log\ka \geq -p\be -3\log\ka,$$
	thus by (K1)
	$$-\Si_1 < \Si_2 + 2\log|f'(z)| + p\be + 3\log\ka < \log|f'(z)| + p(\be + \log\ka)$$
	$$ < -\log|f'(z)|\left(3\frac{\be + \log\ka}{\ga + \be} - 1\right).$$
	Combined with inequality \requ{BFA} it shows that Aspenberg's (FA') implies (FA) with constants $\eta := \de^2$ and $\iota := \tau \log \ka \cdot \max \left(1, 3\frac{\be + \log\ka}{\ga + \be} - 1\right)$.
\end{proof}

By the main result of \cite{GraSmi}, Collet-Eckmann rational maps satisfy the backward Collet-Eckmann condition at critical points of maximal multiplicity. In our setting, all critical points of $f_0$ are simple. However, we need uniform constants for the backward contraction in a neighborhood of $O$ of 
$f_0$. 

Let $\gamma, \gamma_0>0$ such that $f_0$ satisfies CE($\gamma,\gamma_0$).
A careful inspection of the proof of Proposition 1 in \cite{GraSmi} reveals the following facts. Dynamically defined constants $M, R'$ and $R$ can be chosen to be uniform in a neighborhood of $f_0$. They also depend on $\la_1$ and $C_1$, which in our setting are expressed by constants $\ga, \ga_0$. A $\calC^1$ continuity argument, similar to the proof of the above proposition, shows that constants $C_{2t}$ and $L$ can be chosen uniformly for maps $f_\lambda$ satisfying CE($\gamma,\gamma_0$)
in a neighborhood of $f_0$. Observe that the constant $K_{3.1}$ only depends on $R'$.

This observation proves the following:
\begin{proposition}
	\label{propCE2}
	There exist $O$, a neighborhood of $f_0$ in parameter space, and constants $\mu, \mu_0 > 0$ such that every $f_\la$ with $\la \in O$ satisfying CE($\gamma,\gamma_0$) also satisfies CE2($\mu,\mu_0$).
\end{proposition}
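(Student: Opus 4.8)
The plan is to revisit the proof of Graczyk--Smirnov's Proposition~1 in \cite{GraSmi}, which produces the backward Collet--Eckmann estimate CE2 for a Collet--Eckmann rational map with simple critical points, and to keep track of the dependence of each constant appearing in it on the map $f_\lambda$. Since $f_0$ has only simple critical points, shrinking $O$ we may assume by (K4) that every $f_\lambda$, $\lambda\in O$, also has $2d-2$ simple critical points, so the Graczyk--Smirnov result applies to every such $f_\lambda$ satisfying CE($\gamma,\gamma_0$) (in particular to $f_0$ itself); the only point to check is that the output constants $\mu,\mu_0$ can be chosen independently of $\lambda\in O$.

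First I would fix the constants $\gamma,\gamma_0$, restrict to a ball $O\ni f_0$ on which $(f_\lambda)$ is $\mathcal{C}^2$ and (K1--6) hold with some $\kappa>0$, and run the Graczyk--Smirnov argument for $f:=f_\lambda$. The only place where their proof uses the \emph{rate} of forward expansion along postcritical orbits is through the constants they call $\lambda_1$ and $C_1$, which for us are furnished uniformly by CE($\gamma,\gamma_0$). The remaining ingredients are: the dynamically defined radii $R,R'$ and the integer $M$ governing a shrinking-neighbourhoods/telescoping estimate; a Koebe-type distortion constant $K_{3.1}$; and the constants $C_{2t}$ and $L$ fixing a scale below which the relevant inverse branches near critical preimages are univalent with controlled distortion. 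Inspecting their proofs, $R,R',M$ can be bounded below/above in terms of $d$, $\kappa$ (equivalently $\|F\|_{\mathcal{C}^2}$) and $\gamma,\gamma_0$ only; $K_{3.1}$ depends only on $R'$; and a $\mathcal{C}^1$-continuity argument of exactly the same type as in the proof of that proposition shows that $C_{2t}$ and $L$ vary continuously with $f_\lambda$, hence are uniformly bounded on $O$ after a further shrinking. Feeding these uniform bounds into the Graczyk--Smirnov estimate yields $\mu,\mu_0>0$ depending only on $d,\kappa,\gamma,\gamma_0$ with $|(f_\lambda^n)'(x)|>e^{n\mu-\mu_0}$ for all $\lambda\in O$, all $n\ge 0$ and all $x\in f_\lambda^{-n}(\mathcal{C}(f_\lambda))$, which is CE2($\mu,\mu_0$).

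The main obstacle I anticipate is the bookkeeping rather than any new idea: Graczyk--Smirnov's argument is an induction threaded through several auxiliary lemmas, and one has to verify \emph{for each} constant occurring there that it is either an explicit function of $(d,\kappa,\gamma,\gamma_0)$ or depends on the map only through finitely many $\mathcal{C}^1$-continuous quantities. The delicate points are those where a scale is produced by a compactness argument in the dynamical plane --- for instance the existence of a radius $R$ on which all inverse branches along the postcritical set have bounded distortion --- since such arguments must be redone with the compact family $\{f_\lambda : \lambda\in\overline{O'}\}$, $O'\Subset O$, in place of a single map, checking that the resulting scale does not degenerate as $\lambda\to f_0$. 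This is precisely the $\mathcal{C}^1$-continuity step referred to in the paragraph preceding the statement, and carrying it out uniformly over $O$ is what the proof amounts to.
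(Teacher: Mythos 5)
Your proposal matches the paper's own argument almost verbatim: both revisit Graczyk--Smirnov's Proposition~1, observe that $\lambda_1,C_1$ are furnished uniformly by CE($\gamma,\gamma_0$), that $M,R,R'$ (and hence $K_{3.1}$, which depends only on $R'$) can be chosen uniformly near $f_0$, and that $C_{2t}$ and $L$ are handled by a $\mathcal{C}^1$-continuity argument. This is exactly the paper's proof, with some extra words explaining where the compactness arguments must be redone over the family $\{f_\lambda:\lambda\in\overline{O'}\}$.
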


We now restate for the reader's convenience the main result from \cite{Asp13}:

\begin{theorem}[Aspenberg, \cite{Asp13}]\label{th:mainaspenberg}
	Let $f_0$ be a strongly Misiurewicz rational map with simple critical points. If $f_0$ is not a flexible Latt\`es map,
	there exist $\gamma, \gamma_0>0$ and
	$\hat a, \hat \delta, \hat \tau>0$ such that for all $\alpha < \hat \alpha, \delta < \hat \delta, 
	\tau < \hat \tau$, the map $f_0$ is a Lebesgue point of density of rational maps 
	satisfying CE($\gamma,\gamma_0$), BA'($\alpha$), and FA'($\beta, \delta, \tau$) 
	with $\beta:=10\alpha$. 
\end{theorem}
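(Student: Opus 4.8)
The plan is to deduce this statement directly from the main theorem of \cite{Asp13}: there is no new dynamics to carry out here, only a dictionary between Aspenberg's hypotheses and conclusions and the conditions CE($\gamma,\gamma_0$), BA'($\alpha$) and FA'($\beta,\delta,\tau$) introduced above. First I would recall precisely what is established in \cite{Asp13}: starting from a rational map $f_0$ which is Misiurewicz in Aspenberg's sense (the critical points meeting $\J_{f_0}$ are eventually mapped into a hyperbolic set and there are no parabolic cycles) and which carries no invariant line field on that hyperbolic set, he runs a Benedicks--Carleson parameter-exclusion scheme inside a transverse analytic family through $f_0$ and produces a set of parameters of Lebesgue density $1$ at $f_0$, each of whose critical points enjoys an exponential derivative-growth estimate along its forward orbit, together with the auxiliary combinatorial control packaged in his Definition~1.15.

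Second, I would make the dictionary explicit. Aspenberg's basic assumption, that the postcritical orbit stay at distance at least $e^{-n\alpha}$ from $\mathcal C(f)$, is verbatim BA'($\alpha$). His combinatorial control of deep returns and bound periods, with the constant $\delta^2$ in the definition of a deep return, the exponent $\beta$ in the definition of the bound period, and the requirement that a proportion $1-\tau$ of the time be free, is exactly FA'($\beta,\delta,\tau$) as stated before Lemma~\ref{lemFA}. His conclusion supplies the pair $(\gamma,\gamma_0)$, depending only on $f_0$, for which CE($\gamma,\gamma_0$) holds throughout the exceptional set. Finally, the prescription $\beta:=10\alpha$ is not an extra constraint: the standing inequalities in \cite{Asp13} are of the type $\alpha\ll\beta\ll\gamma$, so once $\alpha$ is small one may simply set $\beta=10\alpha$ and then read off the thresholds $\hat\delta,\hat\tau$ from his estimates; I would check that no inequality in his inductive construction degrades under this particular choice of ratio.

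Third, I would check that our hypothesis, ``$f_0$ strongly Misiurewicz with simple critical points and not a flexible Latt\`es map'', implies Aspenberg's standing hypotheses. Since every critical point of $f_0$ is preperiodic to a repelling cycle, $f_0$ has no attracting, parabolic, Siegel or Herman cycle, hence (by the no-wandering-domains theorem) empty Fatou set; in particular $\mathcal C(f_0)\subset\J_{f_0}$ and the set $E_{f_0}$ introduced just before Proposition~\ref{prop:transM} is a compact hyperbolic forward-invariant set. The assumption that $f_0$ is not a flexible Latt\`es map yields, by the Ma\~n\'e--Sad--Sullivan / McMullen theory, the absence of an invariant line field on $E_{f_0}$, which is the nondegeneracy feeding the transversality of critical relations; concretely, the transversality input Aspenberg requires is furnished by Proposition~\ref{prop:transM} above, itself a consequence of Theorem~B of \cite{Astorg}.

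The main obstacle is not conceptual but bookkeeping: one must go through \cite{Asp13} and confirm that every dynamically defined constant entering his parameter-exclusion scheme depends only on $f_0$, so that $\gamma,\gamma_0,\hat\alpha,\hat\delta,\hat\tau$ are genuinely uniform, and that his normalisation of derivatives matches ours up to a harmless additive constant absorbed into $\gamma_0$. This is exactly the kind of careful reading that is also needed for Propositions~\ref{propCE2} and~\ref{lemFA}; a fully self-contained argument would instead require reproducing Aspenberg's large-deviation parameter-exclusion estimate, which we do not attempt here.
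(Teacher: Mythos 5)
Your proposal is correct and follows the paper's own approach: Theorem~\ref{th:mainaspenberg} is stated in the paper as a restatement ``for the reader's convenience'' of the main result of \cite{Asp13}, with no proof given beyond the citation, and your plan to invoke Aspenberg's theorem and verify the dictionary between his hypotheses/conclusions and the conditions CE, BA$'$, FA$'$ (including the harmless choice $\beta=10\alpha$ and the check that strongly Misiurewicz, simple critical points, not flexible Latt\`es implies Aspenberg's standing hypotheses) is precisely what the paper intends. The only caveat worth flagging is that the uniformity of the constants $\gamma,\gamma_0,\hat\alpha,\hat\delta,\hat\tau$ and the matching of normalisations must indeed be traced through \cite{Asp13} as you note, but since the paper treats these as already established there, this does not constitute a gap relative to the paper's own argument.
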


Putting together those results, we are now ready to prove Theorem \ref{tm:abundance}:

\begin{proof}[Proof of Theorem \ref{tm:abundance}]
	Let $\gamma, \gamma_0>0$ be the constants given by Theorem \ref{th:mainaspenberg}.
	Let $\mu, \mu_0>0$ be the constants given by Proposition \ref{propCE2}. Let $\hat \eta:=\hat{\delta}^2$ and 
	$$\hat \iota:= \sup_{0<\beta\leq 10\hat \alpha}\hat \tau \log \kappa \cdot  \max \left( 1,  3 \frac{\beta+\log \kappa}{\gamma+\beta}-1  \right).$$
	
	Let 
	$\alpha<\min(\hat \alpha,\gamma/200)$, $\eta<\hat \eta$ and $\iota< \hat \iota$.
	
	Let $E$ be the set of rational maps satisfying CE($\gamma,\gamma_0$), BA'($\alpha/\kappa$), and FA'($\beta, \delta, \tau$)  with $\beta:=10\alpha$, $\delta^2:=\eta$, and 
	$\tau>0$ small enough to ensure that 
	$\tau \max \left( 1,  3 \frac{\beta+\log \kappa}{\gamma+\beta})-1  \right) \leq \iota$.
	
	By Lemma \ref{lemFA}, each $f \in E$ satisfies FA($\eta, \iota$).
	By Proposition \ref{propCE2}, each $f \in E$ satisfies CE2($\mu,\mu_0$).
	By (K2), each $f \in E$ satisfies BA($\alpha$).
	By Theorem \ref{th:mainaspenberg}, $f_0$ is a Lebesgue density point of $E$.
\end{proof}

\subsection{Proving the large scale condition for good parameters}

In the present section, we prove Theorem~\ref{tm:bigscale}.
Let $\mathcal{F}$ be the space of complex lines passing through $0$ in $\C^N$ and let $\mathcal{L}$ be the unique volume form on $\mathcal{F}$ which is invariant under the action of the unitary group $\mathbb{U}(N)$ and of total mass $1$. We shall rely on the following result of Sibony and Wong~\cite{SW}.

\begin{theorem}[Sibony-Wong]\label{tm:sibonywong}
Let $m>0$ be a positive constant. Let $\mathcal{F}'\subset\mathcal{F}$ be such that $\mathcal{L}(\mathcal{F}')\geq m$ and let $\Sigma$ denote the intersection of the family $\mathcal{F}'$ with the ball $\B(0,r)$ of $\mathbb{C}^N$. Then any holomorphic function $h$ on a neighborhood of $\Sigma$ can be extended to a holomorphic function on $\B(0,\tilde{\rho} r)$ where $0<\tilde{\rho}\leq 1$ is a constant depending on $m$ and $N$ but independent of $\mathcal{F}'$ and $r$. Moreover, we have
\[\sup_{\B(0,\tilde\rho r)}|h|\leq \sup_{\Sigma}|h|.\]
\end{theorem}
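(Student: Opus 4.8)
The plan is to bound the homogeneous Taylor coefficients of $h$ at the origin and then sum them. Since $h$ is holomorphic on a neighbourhood of $\Sigma$ and, for every line $\ell\in\mathcal{F}'$, the disc $\ell\cap\B(0,r)$ lies in $\Sigma$, the restriction of $h$ to that disc is holomorphic and bounded in modulus by $M:=\sup_\Sigma|h|$. Write $h=\sum_{k\geq0}P_k$ for the expansion of $h$ into homogeneous polynomials near $0$, and parametrise $\ell$ by $\zeta\mapsto\zeta v_\ell$ with $v_\ell$ a unit vector spanning $\ell$, so that $h(\zeta v_\ell)=\sum_k P_k(v_\ell)\,\zeta^k$. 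Cauchy's estimate on the disc of radius $r$ then gives $|P_k(v_\ell)|\leq M r^{-k}$ for every $\ell\in\mathcal{F}'$ and every $k$. Identifying $\mathcal{F}$ with $\mathbb{P}^{N-1}$ and $\mathcal{L}$ with the Fubini--Study probability measure, this says that the homogeneous polynomial $P_k$ satisfies $|P_k|\leq M r^{-k}$ on a set $E\subset\mathbb{P}^{N-1}$ of $\mathcal{L}$-measure $\geq m$.

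The heart of the matter will be a Remez-type inequality: there should exist $C=C(m,N)>1$ such that any polynomial $P$ homogeneous of degree $k$ with $|P|\leq A$ on a Borel set $E$ of measure $\geq m$ satisfies $\sup_{\mathbb{P}^{N-1}}|P|\leq A\,C^{k}$. I would derive this from the compactness of the family
\[\mathcal{G}:=\Bigl\{\tfrac1k\log\tfrac{|P|}{\|\cdot\|^k}\ :\ P\text{ homogeneous of degree }k\geq1,\ \sup_{\|v\|=1}|P(v)|=1\Bigr\}\]
of $\omega_{\textup{FS}}$-psh functions on $\mathbb{P}^{N-1}$ normalised to have supremum $0$: such a family is relatively compact in $L^1$ and, by the uniform integrability of quasi-psh functions with bounded supremum, there is $t_0=t_0(m,N)$ with $\mathcal{L}(\{u<-t_0\})<m$ for all $u\in\mathcal{G}$. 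Applying this to $u:=\tfrac1k\log\bigl(|P|/(\|\cdot\|^k\sup_{\mathbb{P}^{N-1}}|P|)\bigr)$ and using $\mathcal{L}(E)\geq m$ forces $u\geq-t_0$ at some point of $E$, which combined with $|P|\leq A$ on $E$ yields $\sup_{\mathbb{P}^{N-1}}|P|\leq A\,e^{kt_0}$; alternatively one can slice by generic complex lines through a pair of points and invoke the classical one-variable Remez inequality together with an integration over the resulting pencil. With $A=Mr^{-k}$ this gives $\sup_{\|v\|=1}|P_k(v)|\leq M(C/r)^k$.

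It follows that $|P_k(z)|\leq M(C\|z\|/r)^k$ for all $z$, so $\sum_k P_k$ converges normally on compact subsets of $\B(0,\tilde\rho r)$ with $\tilde\rho:=\min(1,C^{-1})$, depending only on $m$ and $N$, and defines the desired holomorphic extension of $h$. For the norm estimate I would, after shrinking $\tilde\rho$ slightly so that the extension is holomorphic past $\overline{\B(0,\tilde\rho r)}$, combine the maximum principle with the bound $|h|\leq M$ on the positive-measure family of lines of $\mathcal{F}'$ through $0$: since $s\mapsto\sup_{\|z\|=s}|h(z)|$ is logarithmically convex in $\log s$ (a three-spheres property of the psh function $\log|h|$) and $|h(0)|\leq M$, one pins the bound $\sup_{\B(0,\tilde\rho r)}|h|\leq M$ by comparison with the Siciak extremal function of $\Sigma$, which is non-positive on $\B(0,\tilde\rho r)$ because $\Sigma$ is non-pluripolar. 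The main obstacle is precisely this Remez-type inequality with a constant of the shape $C(m,N)^k$ uniform in $P$ — the dependence on $m$ being delicate for small $m$ — and, secondarily, sharpening the sup estimate to the exact constant $\sup_\Sigma|h|$ rather than a multiple of it.
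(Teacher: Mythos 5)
The paper does not prove this statement: it is quoted as a known result of Sibony and Wong \cite{SW} and used as a black box, so there is no internal proof to compare against. Your outline is nevertheless essentially sound and is close in spirit to the classical argument. The homogeneous expansion $h=\sum_k P_k$ at the origin (which is legitimate since $0\in\Sigma$, so $h$ is holomorphic near $0$), the Cauchy estimate $|P_k(v_\ell)|\le Mr^{-k}$ for $\ell\in\mathcal F'$, and the reduction to a Remez/Bernstein-type inequality for homogeneous polynomials on $\mathbb P^{N-1}$ are exactly the right ingredients. Your derivation of that inequality from the $L^1$-compactness of the family of $\omega_{\mathrm{FS}}$-psh functions normalised by $\sup u=0$ is correct: such a family is bounded in $L^1(\mathbb P^{N-1})$, so by Chebyshev there is $t_0=t_0(m,N)$ with $\mathcal L(\{u<-t_0\})<m$ uniformly, and applying this to $u=\frac1k\log\bigl(|P|/(\|\cdot\|^k\sup_{\|v\|=1}|P|)\bigr)$ yields $\sup_{\|v\|=1}|P|\le A\,e^{kt_0}$ once $|P|\le A$ on a set of $\mathcal L$-measure $\ge m$. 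With $C:=e^{t_0}$ and any $\tilde\rho<C^{-1}$ the series then converges on $\B(0,\tilde\rho r)$ and agrees with $h$ on each disc $\ell\cap\B(0,\tilde\rho r)$, $\ell\in\mathcal F'$, by homogeneity.

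Two points deserve a cleaner treatment. First, at $\|z\|=r/C$ the geometric series diverges, so you must take $\tilde\rho$ strictly below $C^{-1}$; the resulting bound is then $\sup_{\B(0,\tilde\rho r)}|h|\le M/(1-C\tilde\rho)$, which is only a constant multiple of $M$. Second, to upgrade this to the sharp inequality $\sup_{\B(0,\tilde\rho r)}|h|\le\sup_\Sigma|h|$, the route via Siciak's extremal function that you sketch is somewhat circular (its negativity on the small ball is essentially what the Remez step already encodes); the cleanest fix is the standard power trick. Apply the weak bound to $h^n$: its extension to $\B(0,\tilde\rho r)$ is forced to equal $H^n$ where $H$ is the extension of $h$, since uniqueness of the extension follows from the same Remez estimate applied with $A=0$ to the Taylor coefficients of the difference of two extensions. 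Thus $\sup_{\B(0,\tilde\rho r)}|H|^n\le M^n/(1-C\tilde\rho)$ for every $n$, and letting $n\to\infty$ gives $\sup_{\B(0,\tilde\rho r)}|H|\le M$, which is the stated estimate with a $\tilde\rho$ depending only on $m$ and $N$.
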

Another key ingredient will be a transversality property of
the differentials of the $\xi_n^i$. This property first appeared in the work of Tsujii in the real dynamics 
case. It was proved for some class of rational maps (containing notably Collet-Eckmann polynomials) by Levin in \cite{levin2014perturbations}. In \cite{Astorg}, this property was extended to rational
maps having no invariant line fields, and so (see Lemma \ref{lm:noinvlinfield}) this property is satisfied by all
Collet-Eckmann rational maps.

From now on, we will not rely on the result of Aspenberg (\cite{Asp13}). 
We will also not make any more use of the conditions FA' or BA'.
Instead, we fix in this whole 
subsection some rational map $f$ satisfying CE($\gamma,\gamma_0$),  CE2($\mu,\mu_0$), 
BA($\alpha$) and FA($\eta, \iota$). We denote by $\mathbb{S}_{2d-2}$ the unit sphere in $\mathbb{C}^{2d-2}$.

The main goal of this section is to prove the following distortion estimate 
for rational maps $f$ satisfying all the previous conditions:

\begin{proposition}\label{th:distortionestimate}
	Let $\gamma, \gamma_0, \mu, \mu_0>0$ and $\alpha<\gamma/200$. Let $\eta>0$.
	There exists $\iota>0$ such that if $f$
	satisfy	CE($\gamma,\gamma_0$), CE2($\mu,\mu_0$), BA($\alpha$) and  FA($\eta, \iota$),
	then
	there exist $C>0$, a set $E \subset \mathbb{S}_{2d-2}$ such that $\lambda(E)>0$ and an infinite set $G \se \mathbb{N}$ of natural numbers with the following property: for every $n \in G$ and all $c_i \in \mathcal{C}(f)$ there exists $r_{n,i} > 0$ such that for every $u\in E$ 
	$$\Dist(\xi_n^i, \D(0, r_{n,i}) \cdot u) < 10^{-2}, \text{ and}$$
	$$\frac{1}{C} <r_{n,i} |(f^n)'(f(c_i))| < C.$$
\end{proposition}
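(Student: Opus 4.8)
The plan is to build, for each large $n$ and each critical value $v_i := f(c_i)$, a round disk $B_{n,i} = \D(v_i, r_{n,i})$ in the dynamical plane on which $f^n$ has small distortion and expands by roughly $|(f^n)'(v_i)|$, so that $B_{n,i}$ reaches a fixed scale; then to transfer this to parameter space along complex lines through $f$ using Lemma \ref{lem:calculderivxi}-type transversality and the approach rate (BA). First I would treat the dynamical side. Starting from $\D(v_i, r)$, the quantity $a^+(v_i, n; f)$ from \eqref{equAplus} is exactly the radius below which Lemma \ref{lemDist} gives $\Dist(f^n, \D(v_i,r)) < 10^{-2}$. So I want to choose $r_{n,i}$ comparable to $a^+(v_i, n; f)$ and show that, for a positive-density set of $n$, one has $a^+(v_i,n;f) \cdot |(f^n)'(v_i)| \asymp 1$, i.e. the image disk has fixed size. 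Controlling $a^+$ means controlling the sum $\sum_{j<n} |(f^j)'(v_i)|/|f'(f^j(v_i))|$. Along free times (where $f^j(v_i) \notin C(f,\eta)$) the factor $1/|f'(f^j(v_i))|$ is bounded and (CE) gives exponential growth of $|(f^n)'(v_i)|$ relative to $|(f^j)'(v_i)|$, so these terms are summable with the right normalization. Near a deep return at time $\nu$ one has a large factor $1/|f'(f^\nu(v_i))|$, but Lemma \ref{lemKx} and the bound-period analysis (as in the proof of Lemma \ref{lemFA}, via \eqref{equDistonV}) show that the loss over the bound period $p$ is recovered by (CE2) and the recopying of the orbit of the visited critical value, so that the contribution of returns is controlled by $\sum -\log|f'(f^{\nu_i}(v_i))|$, which by (FA) is at most $n\iota$ with $\iota$ small. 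Choosing $\iota$ small enough forces $a^+(v_i,n;f)|(f^n)'(v_i)|$ to stay in $[1/C, C]$ for a positive-density (hence infinite) set $G_i$ of times $n$; intersecting the $G_i$ over $i$ keeps a set $G$ of positive density — this is essentially the content of Lemma \ref{lemAplus} referred to in the outline.

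Next I would pass to parameter space. Fix a direction $u \in \mathbb{S}_{2d-2}$ and consider the complex line $\lambda \mapsto f + \lambda u$ (in suitable coordinates). By Lemma \ref{lem:calculderivxi}, $\frac{1}{(f^n)'(v_i)} \frac{d}{d\lambda}\xi_n^i = \dot v_i + \sum_{k=0}^{n-1}(f^k)^*\eta_u(v_i)$, where $\eta_u$ is the meromorphic vector field associated to the direction $u$. By transversality (Proposition \ref{prop:transM} in the Misiurewicz case; in general it follows from Theorem B of \cite{Astorg} and Lemma \ref{lm:noinvlinfield}, the map having no invariant line field since it is Collet-Eckmann), this sum converges, for a full-measure set of $u$, to a nonzero limit $\tau_i(u)$ with the $\tau_i$ linearly independent; moreover the tail is controlled because (CE) makes $\|(f^k)^*\eta_u(v_i)\| = \|\eta_u(f^k(v_i))\|/|(f^k)'(v_i)|$ decay exponentially, while (BA) controls the possible blow-up of $\eta_u$ near critical points (a bounded-numerator meromorphic vector field has a pole of order one at a simple critical point, and $|f^j(v_i) - c| > e^{-j\alpha}$ keeps $\|\eta_u(f^j(v_i))\|$ from growing faster than $e^{j\alpha}$, beaten by $e^{j\gamma}$). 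Hence $|D\xi_n^i(0)\cdot u| \asymp |(f^n)'(v_i)|$ uniformly in $n$ for $u$ in a set $E' \subset \mathbb{S}_{2d-2}$ of positive measure, with the same conclusion for the second derivative (parametric distortion), using Lemma \ref{lemParDist} from the outline and the same estimates. Rescaling the line by $1/|D\xi_n^i(0)\cdot u| \asymp 1/|(f^n)'(v_i)| \asymp r_{n,i}$ then yields $\Dist(\xi_n^i, \D(0,r_{n,i})\cdot u) < 10^{-2}$.

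I would finish by intersecting the directional sets: for each $i$ the directions $u$ for which the estimate holds form a positive-measure subset of $\mathbb{S}_{2d-2}$, and their finite intersection $E := \bigcap_i E'_i$ still has positive measure $\lambda(E) > 0$; together with the infinite set $G$ of good times this gives exactly the stated conclusion, with $r_{n,i} \asymp 1/|(f^n)'(f(c_i))|$ so that $\frac1C < r_{n,i}|(f^n)'(f(c_i))| < C$.

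The main obstacle is the simultaneous control of the dynamical radius $a^+(v_i,n;f)$ through deep returns and bound periods: one must show that the large derivative losses incurred when the postcritical orbit comes close to the critical set are exactly compensated (up to the small budget allowed by (FA)) by the combination of (CE2) backward expansion, the bound-period distortion estimates of the Lemma \ref{lemFA} type, and Lemma \ref{lemKx}'s recopying of a long prefix of the visited critical orbit. Getting the constants to line up so that $\iota$ can be taken small enough to keep $a^+(v_i,n;f)|(f^n)'(v_i)|$ bounded away from $0$ and $\infty$ — and doing so uniformly over all $2d-2$ critical values so that a positive density of common good times survives — is the technical heart of the argument.
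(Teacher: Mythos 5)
Your plan follows essentially the same route as the paper: control the dynamical distortion on $\D(v_i, r)$ with $a^+$ from \eqref{equAplus} via Lemma~\ref{lemDist}, invoke Lemma~\ref{lemAplus} to get a positive-density set of times $n$ with $a^+(v_i,n;f)\cdot|(f^n)'(v_i)| > \rho$, choose $\iota$ small enough that the densities exceed $1 - \frac{1}{2d-2}$ so the $G_i$ still intersect, and transfer to parameter distortion along lines in direction $u$ by combining the transversality property (T) with Lemma~\ref{lemParDist}, setting $r_{n,i} = l_1^{-1}a^+(v_i, n; f) \asymp |(f^n)'(v_i)|^{-1}$. So the architecture and the choice of lemmas match the paper.

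There is one genuine gap: the construction of $E$. You set $E := \bigcap_i E'_i$ where each $E'_i \subset \mathbb{S}_{2d-2}$ has positive measure, and conclude that the intersection has positive measure. That inference is false in general --- a finite intersection of positive-measure subsets of a sphere can be empty. It needs to be argued that the $E'_i$ have non-empty intersection, and there is no smallness parameter available to make each complement tiny (unlike the $G_i$, whose complements shrink with $\iota$). The fix is the one the paper uses: since the $\tau_i$ are linearly independent, one can pick a dual-type basis $u_1,\dots,u_{2d-2}$ with $\tau_i(u_j)\neq 0$ iff $i=j$, and the normalized sum $u^\ast := \|\sum_j u_j\|^{-1}\sum_j u_j$ satisfies $\tau_i(u^\ast)\neq 0$ for every $i$ simultaneously; then $E$ is taken to be a single small open neighborhood of $u^\ast$ in $\mathbb{S}_{2d-2}$ on which all $|\tau_i(u)|$ are uniformly bounded below as in \eqref{def:E}. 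This simultaneously makes $E$ open and non-empty (hence of positive measure) and gives the uniform constant $M = 1 + \max\{|\tau_i(u)|^{\pm1} : u \in E\}$ that enters $l_1$ in Lemma~\ref{lemParDist}. If you frame each $E'_i$ as the open set $\{u : |\tau_i(u)| > \epsilon_i\}$ and note $u^\ast$ lies in every one, your intersection argument also goes through; but it has to be said.
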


\subsection{Distortion estimates for a single CE map}

\begin{lemma}
	\label{lemBkDil}
	Assume $\al < \ga/200$. There exist $\de_1 > 0$ such that for all $0 < \de < \de_1$, if $n>0$ and $z \in \P^1$ satisfy
	\begin{equation}\label{equBK}
	f^j(z) \notin C(f, \de) \text{ for all } 0 \leq j < n \text{ and } f^n(z) \in C(f, \de),
	\end{equation}
	we have
	$$|(f^n)'(z)| > e^{n\mu - \mu_0 - 1}.$$
\end{lemma}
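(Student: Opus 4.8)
The strategy is to replace $z$ by a nearby point $\zeta$ whose $n$-th iterate is a genuine critical point and then invoke the backward Collet--Eckmann condition. Indeed, if $\zeta\in f^{-n}(\mathcal{C}(f))$ then CE2 gives $|(f^n)'(\zeta)|>e^{n\mu-\mu_0}$; so if $\zeta$ can be joined to $z$ inside a set on which $\Dist(f^n,\cdot)<1$, comparing derivatives yields $|(f^n)'(z)|\ge e^{-1}|(f^n)'(\zeta)|>e^{n\mu-\mu_0-1}$, which is the claim. All the work is in producing such a $\zeta$.

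Let $c\in\mathcal{C}(f)$ be the critical point with $f^n(z)\in\D(c,\delta)$, and let $W:=W_0$ be the connected component of $f^{-n}(\D(c,2\delta))$ containing $z$; set $W_j:=f^j(W)$. Since $W_n=\D(c,2\delta)$ is connected, $f^n(W)=W_n$, so $W$ contains a point $\zeta$ with $f^n(\zeta)=c$, and $|\zeta-z|\le\diam(W)$. Thus it suffices to show that, once $\delta$ is small, $f^n$ is univalent on $W$ with $\Dist(f^n,W)<1$, i.e. that none of $W_0,\dots,W_{n-1}$ meets $\mathcal{C}(f)$ and that $\sum_{j=0}^{n-1}\Dist(f,W_j)<1$.

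The bulk of the orbit is controlled exactly as in the proof of Lemma~\ref{lemDist}. By the Mañé-type estimate Proposition~\ref{propMane} (recall that $f$, being Collet--Eckmann, has no parabolic cycle), the hypothesis $f^j(z)\notin C(f,\delta)$ for $0\le j<n$ forces uniform expansion $|(f^{n-j})'(f^j(z))|\ge C_\delta\lambda_\delta^{\,n-j}$ with $\lambda_\delta>1$, so the $W_j$ shrink geometrically: $\diam(W_j)\le C_1\lambda_\delta^{-(n-j)}$. As long as $\diam(W_j)\le\delta/2$ — that is, for $n-j$ larger than some $N_1=O(\log(1/\delta))$ — the set $W_j$ contains $f^j(z)$ and hence lies at distance $\ge\delta/2$ from $\mathcal{C}(f)$, so by \eqref{equDDist} and (K2), (K3) we get $\Dist(f,W_j)\le\kappa\,\diam(W_j)/\dist(W_j,\mathcal{C}(f))\lesssim\diam(W_j)/\delta$, and these terms are summable with total $\ll 1$. (Equivalently, $f^n$ is univalent with distortion $<10^{-2}$ on the disk $\D(z,a^+(z,n,f))$ of Lemma~\ref{lemDist}.)

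The main obstacle is the ``return window'': the last $N_1=O(\log(1/\delta))$ iterates, where $\diam(W_j)$ need no longer be small, so a priori $W_j$ could meet $\mathcal{C}(f)$ or contribute a large distortion. First, (BA) used with $n=0$ gives $|f'(v)|>1$, hence $\dist(v,\mathcal{C}(f))>\kappa^{-1}$ by (K2), for every critical value $v$; so for $\delta<(2\kappa)^{-1}$ the disk $\D(c,2\delta)$ contains no critical value and $W_{n-1}$ avoids $\mathcal{C}(f)$. More generally, (BA) together with (K2) and (K6) bounds from below the distance of $f^m(v)$ to $\mathcal{C}(f)$ in a bounded number of steps $m$, and since $\alpha<\gamma/200$ this lower bound dominates $\delta$ against the shrinking rate $\lambda_\delta$, excluding a critical obstruction in all of $W_0,\dots,W_{n-1}$. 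A macroscopic $W_j$ forces some $f^j(z)$ to lie within distance comparable to $\delta$ of $\mathcal{C}(f)$, i.e. $f^j(z)\in C(f,\delta_*)^*$ for an auxiliary $\delta_*=\delta_*(\delta)$; in that case Lemma~\ref{lemKx}(2)--(4) governs the next $k(f^j(z))=O(\log(1/\delta))$ iterates, showing that this stretch shadows a critical orbit with bounded distortion and regains a large derivative, so it adds only a negligible amount to $\sum_j\Dist(f,W_j)$ for $\delta$ small. Combining the two regimes, $f^n|_W$ is univalent with $\Dist(f^n,W)<1$; then $\zeta\in W\cap f^{-n}(\mathcal{C}(f))$, CE2 gives $|(f^n)'(\zeta)|>e^{n\mu-\mu_0}$, and the distortion bound yields $|(f^n)'(z)|>e^{-1}e^{n\mu-\mu_0}=e^{n\mu-\mu_0-1}$. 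The delicate bookkeeping in the return window — reconciling the geometric shrinking rate with the (BA)-controlled approach of critical orbits to $\mathcal{C}(f)$, and excising the bound periods produced by moderately close passages via Lemma~\ref{lemKx} — is the technical heart of the argument.
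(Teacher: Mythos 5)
Your top-level strategy — find a preimage $\zeta\in f^{-n}(\mathcal C(f))$ in the same component as $z$, invoke CE2 at $\zeta$, and transfer the lower bound to $z$ by a bounded-distortion estimate on $f^n$ — is indeed the same as the paper's. But the way you try to establish the distortion bound on $W$ is where the argument breaks, and it breaks in a way that hides the actual technical content of the lemma.

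First, you cannot invoke Proposition~\ref{propMane} as you do. That proposition gives uniform expansion only for orbits that stay out of the \emph{fixed} neighborhood $C(f,\delta_2)$. The hypothesis \eqref{equBK} gives you only $f^j(z)\notin C(f,\delta)$ with $\delta<\delta_1$ arbitrarily small; $f^j(z)$ is perfectly free to sit in $C(f,\delta_2)\setminus C(f,\delta)$, and there Proposition~\ref{propMane} says nothing. Hence the ``uniform expansion $|(f^{n-j})'(f^j(z))|\ge C_\delta\lambda_\delta^{n-j}$'' you claim simply does not follow from the cited lemma, and with it collapses the geometric shrinking of $\diam(W_j)$ and the summability of $\Dist(f,W_j)$. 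Second, even granting such shrinking in the abstract, the derivation is circular: you control $\diam(W_j)$ by (implicit) Koebe on the inverse branches of $f^n$, but Koebe requires univalence and bounded distortion of $f^n$ on $W$, which is precisely what you are trying to prove.

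The deeper issue is that the distortion estimate cannot be driven by $z$'s orbit at all — $z$ is not a preimage of $\mathcal C(f)$, so CE2 is unavailable along it, and the hypothesis on $z$'s orbit (avoiding $C(f,\delta)$) degenerates as $\delta\to0$. The paper's proof instead runs the distortion estimate along the orbit of a \emph{preimage} $z'\in A(n,\delta):=f^{-n}(\mathcal C(f))\setminus\bigcup_{j<n}f^{-j}(C(f,\delta^2))$: because $f^n(z')\in\mathcal C(f)$, CE2 applies to $f^j(z')$ for every $j$, and because $f^j(z')\notin C(f,\delta^2)$, (K2) controls $|f'(f^j(z'))|$. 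Splitting the sum in the $a^+$ quantity into the last $l\asymp\log(1/\delta)$ iterates (handled with Lemma~\ref{lemKx}) and the rest (handled directly with CE2) yields $a^+(z',n,f)\,|(f^n)'(z')|>2\delta$, and then Lemma~\ref{lemDist} plus \eqref{equImg} gives $\Dist(f^n,W)<10^{-2}$ on any component $W$ of $f^{-n}(C(f,\delta))$ that meets $A(n,\delta)$. You never establish that your $\zeta$ lies in $A(n,\delta)$ — i.e.\ that its orbit avoids $C(f,\delta^2)$ — and without that, the estimate cannot be run.

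Finally, the genuine technical heart, which you correctly sense exists but misplace, is not ``reconciling the geometric shrinking rate with BA'' but the proof that $k_0=0$: the component of $f^{-n}(C(f,\delta))$ containing $z$ already meets $A(n,\delta)$. This is done by a two-case contradiction pulling back one step at a time (does the component $J$ of $f^{-1}(W)$ contain a critical point or not?), using (K6), \eqref{equDW}, \eqref{equImg}, (CE), (BA), $\alpha<\gamma/200$, and \eqref{equBK}. That argument is essential precisely because it produces the good preimage $z'$ without circularity, and nothing in your proposal replaces it.
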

\begin{proof}
	For $\de > 0$ and $n \in \N$ denote
	$$A(n,\de) := f^{-n}(\mathcal{C}(f)) \sm \bigcup_{j = 0}^{n-1} f^{-j}(C(f, \de^{2})).$$
	Let us prove that if $W$ is a connected component of $f^{-n}(C(f,\de))$ which intersects $A(n,\de)$ then 
	\begin{equation} \label{equDW}
	\Dist(f^{n}, W) < 10^{-2}.
	\end{equation} 
	Let $z \in W \cap A(n,\de)$, $l := -2\mu^{-1}\log \de$ and $\Si_{1},\Si_{2},\Si_{3}$ the sums of $$|(f^{n-j})'(f^{j}(z)) \cdot f'(f^{j}(z))|^{-1}$$ on the following sets of $j \in \N$, respectively:
	$$\{n-l \leq j < n\ :\ f^{j}(z) \in C(f, \sqrt \de)\},$$
	$$\{n-l \leq j < n\ :\ f^{j}(z) \notin C(f, \sqrt \de)\},\ \{0 \leq j < n-l\}.$$
	If $\de$ is small and $y \in C(f, \sqrt \de)$ then Lemma~\ref{lemKx}, claims (3) and (4), provides $k > 0$ such that $\log |(f^{k+1})'(y)| > -0.9 \log |f'(y)|$ and for all $0 < j \leq k$, $f^{j}(y) \notin C(f, \de)$. Combined with (K2) and (CE2) at $f^n(z) \in \mathcal{C}(f)$,
	we obtain
	$$\Si_{1} < l \cdot \left( \frac{\de^{2}} \ka \right)^{-0.1} e^{\mu_{0}},\ \Si_{2} < l \cdot \ka \de^{-0.5} e^{\mu_{0}},\ \Si_{3} < \sum_{j > l} \ka\de^{-2} e^{- j \mu  + \mu_{0}}\ .$$
	By inequality \requ{Apd}
	$$a^+(z,n,f) \cdot |(f^{n})'(z)| = (400e\ka^{2}(\Si_{1} + \Si_{2} + \Si_{3}))^{-1} > 2\de\ .$$
	Lemma~\ref{lemDist} and observation \requ{Img} complete the proof of claim \requ{DW}.
	
	Let $z \in \P^1$ satisfying \requ{BK} for some $n > 0$. Let $0 \leq k_{0} \leq n$ be the smallest integer such that $f^{k_{0}}(z) \in W$, a connected component of $f^{-n+k_{0}}(C(f,\de))$ which contains $z' \in A(n-k_{0}, \de)$. We prove that $k_{0} = 0$ and use inequality \requ{DW} and (CE2) to complete the proof.
	
	Suppose $k_{0} > 0$. Let $J$ be the connected component of $f^{-1}(W)$ containing $y := f^{k_{0} - 1}(z)$. There are two possibilities.
	
	\noindent\textbf{Case 1.} There exists $c \in \mathcal{C}(f) \cap J$. Then by \requ{BK}, $y \notin \D(c, \de)$ and by (K6) $\diam(W) \geq \dist(f(y), f(c)) > \ka^{-2} \de^{2}$. Then by \requ{DW} and \requ{Img}
	$$|(f^{n-k_{0}})'(f(c))| < 2\frac{2\de}{\diam(W)} < 4 \ka^{2} \de^{-1}.$$
	Then by (CE)
	$$\ga (n-k_{0}) < \log |(f^{n-k_{0}})'(f(c))| + \ga_{0} <  \log( \de^{-1}) + \log (4 \ka^{2}) + \ga_{0}.$$
	As $f^{n-k_{0} + 1}(c) \in C(f, \de)$, by (BA) and (K2) we obtain
	$$ -(n - k_0)\al < \log|f'(f^{n-k_{0} + 1}(c))| < \log\de + \log\ka.$$
	As $\al < \ga/200$, taking $\de$ sufficiently small, we obtain a contradiction. 
	
	\noindent\textbf{Case 2.} $J \cap \mathcal{C}(f) = \varnothing$. Let $z'' := (f|_{J})^{-1}(z') \notin A(n-k_{0} + 1,\de)$. There exists  $c \in \mathcal{C}(f)$ with $\dist(z'', c) < \de^{2}$. As $c \notin J$, we may choose $x \in [z'', c] \cap \pa J$. By (K6), $\dist(z', f(x)) < \ka^{2}\de^{4}$. As $f(x) \in \pa W$, by \requ{DW} and \requ{Img} applied to $f^{-(n-k_{0})}$
	$$W \se \D(z', 2\, \ka^{2}\de^{4}) \text{ and hence } J \se \D(z'', \sqrt 2\, \ka^{2}\de^{2}).$$
	But this implies $\dist(y, c) \leq \diam(J) + \dist(z'', c) < \de$, which contradicts \requ{BK}.	
\end{proof}

Let $\de_2 := \min(\de_0, \de_1)$ such that lemmas \ref{lemKx} and \ref{lemBkDil} apply for all $\de < \de_2$.

We will use the following variant of a well known result due to Ma\~né \cite{Mane}. We provide a short proof using the $\mathrm{ExpShrink}$ property of (CE) maps \cite[Main Theorem]{PrzRLeSmi}:
there exist $r, \ga'>0$ such that for all $z \in \mathcal J$ (the Julia set of $f$), every $n > 0$ and every connected component $W$ of $f^{-n}(\D(z, r))$ 
$$\diam(W) < e^{-n\ga'}.$$

\begin{proposition}\label{propMane} 
	Let $K:= \P^1 \sm C(f, \de_2)$ be a compact set. There exist $ \ga', \ga_0' > 0$ such that for all $n \in \N$ and $z \in \cap_{i=0}^{n-1} 	f^{-i}(K)$
	$$|(f^n)'(z)| > e^{n\ga' - \ga_0'}.$$
\end{proposition}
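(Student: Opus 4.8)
The plan is to combine the $\mathrm{ExpShrink}$ property of $(CE)$ maps with a compactness argument, distinguishing between orbits that come close to the Julia set and those that stay away from it. First I would fix $r, \ga'' > 0$ given by the $\mathrm{ExpShrink}$ property: for every $z \in \mathcal{J}$, every $n > 0$ and every connected component $W$ of $f^{-n}(\D(z,r))$, one has $\diam(W) < e^{-n\ga''}$. I would then want to transfer this into a derivative estimate along orbits contained in $K = \P^1 \sm C(f,\de_2)$. The key observation is that $K$ is compact and contains no critical points of $f$; by $(K2)$ there is a uniform lower bound $\kappa_0 > 0$ on $|f'|$ over $K$. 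Hence for any $z$ with $z, f(z), \ldots, f^{n-1}(z) \in K$, trivially $|(f^n)'(z)| > \kappa_0^n$, but this is only useful if $\kappa_0 > 1$, which we cannot assume. So the real content is to upgrade this to genuine exponential expansion using the ExpShrink estimate.

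The main step is a Koebe/telescoping argument. Suppose $z, f(z), \ldots, f^{n-1}(z) \in K$. Two cases arise. If $f^j(z)$ stays uniformly far from $\mathcal{J}$ for all $j$, then the orbit lives in a compact subset of the Fatou set disjoint from the critical set; since all Fatou components of a CE map are attracting basins (CE maps are hyperbolic on a neighborhood of the postcritical set, or more simply one invokes that CE implies no parabolic cycles and the orbit converges to an attracting cycle), a standard compactness argument gives uniform exponential contraction of $f$, i.e. $|(f^n)'(z)|$ is exponentially large — actually here one must be a bit careful, so instead I would argue that an orbit in $K$ that never approaches $\mathcal{J}$ cannot be too long, or handle the tail separately. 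If instead $f^{n}(z)$ (or some forward iterate up to a bounded time) lies within $r/2$ of $\mathcal{J}$, pick $w \in \mathcal{J}$ with $|f^n(z) - w| < r/2$; then $z$ lies in a connected component $W$ of $f^{-n}(\D(w,r))$, so $\diam(W) < e^{-n\ga''}$. Composing: the disk $\D(w, r/2)$ pulls back under the univalent branch of $f^{-n}$ (well-defined on $\D(w,r)$ since $K$ avoids critical points — this requires that $W$ contains no critical point, which follows because $f^j(z) \in K$ forces the relevant preimages to avoid $\mathcal{C}(f)$, at least after a short controlled excursion) to a set of diameter $< e^{-n\ga''}$; by the Koebe distortion theorem applied to the inverse branch on $\D(w, r)$, $|(f^n)'(z)|^{-1} \leq C_K \diam(W)/r$, whence $|(f^n)'(z)| \geq (r/C_K) e^{n \ga''}$. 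Taking $\ga' < \ga''$ and $\ga_0'$ absorbing the constant $C_K/r$ and the (bounded) contribution of the excursion gives the claim.

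The hard part will be controlling the case where the orbit stays in $K$ but does \emph{not} get close to $\mathcal{J}$, and ensuring that the pull-back components $W$ of $\D(w,r)$ genuinely avoid the critical set so that the inverse branch is univalent and Koebe applies. For the first issue, I expect one uses that for a CE rational map all Fatou components are (super)attracting basins by~\cite{GraSmi} or by the Misiurewicz-type structure, so a forward orbit staying in a compact set $K$ disjoint from the critical orbit must eventually be trapped near an attracting cycle where $|f'| < 1$; but then the orbit cannot remain in $K$ indefinitely unless $K$ meets an attracting cycle — in which case that portion of the orbit contributes only a bounded factor, absorbed into $\ga_0'$, and one reduces to the Julia-approaching case for the remainder, shortening $n$ by a uniformly bounded amount. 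For the second issue, one shrinks $r$ (depending only on $f$ and $\de_2$) so that every component of $f^{-n}(\D(w,r))$ with $w \in \mathcal J$ meeting $\cap_{i=0}^{n-1} f^{-i}(K)$ avoids a $\de_2$-neighborhood of $\mathcal C(f)$ — essentially because $K$ is at distance $\de_2$ from $\mathcal C(f)$ and, by $(K6)$, $f$ does not shrink distances to critical points too violently — so the relevant inverse branch of $f^n$ is univalent on $\D(w,r)$ and Koebe's theorem yields the uniform distortion bound. The constants $\ga', \ga_0'$ depend only on $f$ (through $\ga''$, $r$, $\kappa$, $\de_2$), as required.
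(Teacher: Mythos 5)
Your core technique — the $\mathrm{ExpShrink}$ property combined with the Koebe distortion theorem applied to inverse branches of $f^n$ — is precisely the paper's argument, so the main line of reasoning is sound. However, you spend most of your effort on a case that does not occur, and in doing so you make a substantive error: you assert that ``all Fatou components of a CE map are attracting basins.'' This is false under the paper's definition of Collet--Eckmann, which requires $\mathcal{C}(f)\subset\J_f$; since every attracting (or parabolic) basin must contain a critical point, there can be no such basin. In fact, the paper opens the proof by noting that, because all critical orbits satisfy the Collet--Eckmann condition, the classification of Fatou domains forces $\J_f=\P^1$: attracting/parabolic basins are excluded because $\mathcal{C}(f)\subset\J_f$, and rotation domains are excluded as well. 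Once one knows $\J_f=\P^1$, the entire case ``the orbit stays far from $\J$'' is vacuous, and your detour of choosing $w\in\J$ with $|f^n(z)-w|<r/2$ is unnecessary since $f^n(z)$ itself lies in $\J$.

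On the remaining (and only) case your argument essentially matches the paper, but the paper's handling of the univalence of the inverse branch is cleaner and worth noting: after shrinking $r$ so that $\diam(f^i(W))<\dist(K,\mathcal{C}(f))/2$ for every $i<n$, the assumption $f^i(z)\in K$ forces each $f^i(W)$ to be disjoint from $\mathcal{C}(f)$ (not merely from a neighborhood of it via (K6)), so $f^n|_W$ is univalent and Koebe applies directly. Your version of this step, relying on (K6) and the phrase ``does not shrink distances to critical points too violently,'' is vaguer and would need to be made precise; the paper's argument is the right way to nail it down. In short: the missing idea is $\J_f=\P^1$, and the claim about attracting basins is wrong — these do not ultimately change the conclusion (since the problematic case is empty), but they indicate a misreading of the standing CE hypothesis $\mathcal{C}(f)\subset\J_f$.
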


\begin{proof}
	As all critical orbits of $f$ satisfy the Collet-Eckmann condition, by the classification of Fatou domains, $\mathcal J = \P^1$. Using $\mathrm{ExpShrink}$, by continuity and by eventually shrinking $r$, we may assume that for all $i < n$
	$$\diam(f^i(W)) < \frac{\dist(K, \mathcal{C}(f))}2,$$
	where $W$ is an arbitrary connected component of $f^{-n}(\D(z, r))$.
	Then $f^n$ is univalent on $W$ and we conclude by the Koebe lemma applied to $f^{-n}$ on $\D(z, r/2)$ that if $n$ is large enough and $f^i(z) \in K$ for $i=0,\ldots,n-1$ then
	$$|(f^n)'(z)| > 3.$$
	which completes the proof.
\end{proof}

Let $\ga'$ and $\ga_0'$ be provided by Proposition~\ref{propMane}. Let
\begin{equation}
\label{equSi}
\si := \min(\ga / 3, \mu, \ga') \text{ and } \si_0 := \log\ka + \ga' + \mu_0 + \ga_0' + 1.
\end{equation}

\begin{lemma}\label{lemDfn}
	Let $z \in \P^1$ and $n > 0$ such that for all $0 \leq i < n$, $f^i(z) \notin \mathcal{C}(f)$. Let $I := \{ 0 \leq i < n\ :\ f^i(z) \in C(f, \de_2) \text{ and } i + k(f^i(z)) \geq n\}$. Let $\be := 1$ if $I = \varnothing$, otherwise $\be := \min\{\d(f^i(z), \mathcal{C}(f)) \ :\ i \in I\}$. Then
	$$\log |(f^n)'(z)| > \log \be + n\si - \si_0.$$
\end{lemma}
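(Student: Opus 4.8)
The plan is to bound $\log|(f^n)'(z)|=\sum_{i=0}^{n-1}\log|f'(f^i(z))|$ (a finite sum, since $f^i(z)\notin\mathcal{C}(f)$ for $i<n$) from below via a Benedicks--Carleson type decomposition of the time interval $[0,n)$ into \emph{free periods} and \emph{bound periods}. A \emph{return} is a time $\nu$ with $w:=f^\nu(z)\in C(f,\de_2)^*$; it opens a bound period of length $k(w)+1$, along which, by \rlem{Kx}(3)--(4), the orbit of $z$ shadows the Collet--Eckmann orbit of the critical value $v:=f(c)$, $c$ the critical point closest to $w$. Returns falling inside an already open bound period get absorbed into it (overlapping bound periods being merged into maximal bound blocks), so that $[0,n)$ is a concatenation of free periods and bound blocks, with one possible exception: the terminal segment may be an \emph{unfinished} bound period $[q,n)$, i.e.\ one opened at a return $q$ with $q+k(f^q(z))\ge n$. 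Such a terminal unfinished bound period occurs precisely when $I\ne\es$, and then one may take $q=\min I$: indeed, if $i\in I$ and $i>q$, then $i<n\le q+k(f^q(z))+1$, so all of $I$ lies in the single bound period $[q,q+k(w_q)+1)$, $w_q:=f^q(z)$.

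Into this decomposition one feeds three expansion estimates, each at rate $\ge\si$ (recall \requ{Si}). On a complete bound period $[\nu,\nu+p)$ with $p=k(w)+1$, $w=f^\nu(z)$, \rlem{Kx}(2) gives $\log|(f^p)'(w)|>\frac{\ga}{3}p+\ga_0+\mu_0+1\ge\si p+\mu_0+1$. On a free period $[\nu,\nu+m)$ ending in a return, \rlem{BkDil} applied at $f^\nu(z)$ gives $\log|(f^m)'(f^\nu(z))|>m\mu-\mu_0-1\ge\si m-\mu_0-1$. On a terminal free period $[\nu,n)$ not ending in a return, Proposition~\ref{propMane} gives $\log|(f^{n-\nu})'(f^\nu(z))|>(n-\nu)\ga'-\ga_0'\ge\si(n-\nu)-\ga_0'$. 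The key bookkeeping remark is that the surplus $\ga_0+\mu_0+1$ of a complete bound period absorbs the deficit $\mu_0+1$ of the free period that opens it: pairing each such free period with its bound block yields a pair contributing $\ge\si\cdot(\text{its length})$ with no loss. Consequently, when $I=\es$, the interval $[0,n)$ is exhausted by such pairs together with at most one terminal free period of the third kind (a possible unpaired leading bound block only helping), so $\log|(f^n)'(z)|>n\si-\ga_0'\ge\log\be+n\si-\si_0$ since $\be=1$ and $\ga_0'\le\si_0$.

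When $I\ne\es$, split $[0,n)=[0,q)\sqcup[q,n)$ with $q=\min I$ and $w_q=f^q(z)\in C(f,\de_2)^*$. From \rlem{Kx}(3), (K2) and $|f'(w_q)|<1$ (valid since $\dist(w_q,\mathcal{C}(f))<\de_2\le\ka^{-1}$) one gets $\dist(f^i(z),\mathcal{C}(f))\ge\ka^{-2}\dist(w_q,\mathcal{C}(f))$ for all $i\in I$, hence $\ka^{-2}\dist(w_q,\mathcal{C}(f))\le\be\le\dist(w_q,\mathcal{C}(f))$ and in particular $\log\dist(w_q,\mathcal{C}(f))\ge\log\be$. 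Over $[0,q)$ one proceeds as above, obtaining $\log|(f^q)'(z)|\ge q\si-\mu_0-1$. Over the prefix $[q,n)$ of $w_q$'s bound period, of length $m:=n-q\le k(w_q)$, one writes $|(f^m)'(w_q)|=|f'(w_q)|\cdot|(f^{m-1})'(f(w_q))|$, bounds $|f'(w_q)|\ge\ka^{-1}\dist(w_q,\mathcal{C}(f))\ge\ka^{-1}\be$ by (K2), and bounds $|(f^{m-1})'(f(w_q))|$ from below via the shadowing \rlem{Kx}(4), the distortion control of \rlem{Dist} around $v_q:=f(c_q)$ (using $f(w_q)\in\D(v_q,a^+(v_q,k(w_q)))$ from \requ{AP}) and the Collet--Eckmann estimate for $v_q$ — the additive slack $\ga_0$ being cancelled against the surplus in \rlem{Kx}(2) by estimating the unused tail $[q+m,q+k(w_q)+1)$ of the bound period from above through (K1). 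Multiplying the two halves and using $\ga\ge3\si$ (and $\ga'\ge\si$ to absorb the extra $\si$ in the extreme case $m=1$, where one argues directly that $|(f^n)'(z)|\ge|(f^{n-1})'(z)|\,|f'(w_q)|\ge\ka^{-1}\be\,e^{(n-1)\si-\mu_0-1}$) yields $\log|(f^n)'(z)|>\log\be+n\si-\si_0$.

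I expect the delicate point to be exactly this last part: organizing the bound-period structure near time $n$ into maximal blocks so that $\min I$ is a genuine return, controlling how much a deep return lying \emph{inside} the unfinished bound period can shrink $\be$ (it stays within a factor $\ka^{2}$ of $\dist(w_q,\mathcal{C}(f))$), and, above all, fitting every accumulated loss — the leftover $\mu_0+1$ from $[0,q)$, the slack $\ga_0$ from (CE), the factors $\log\ka$ coming from (K2) and (K1), the unit distortion losses, and the surplus $\si$ in the case $m=1$ — into the prescribed constant $\si_0=\log\ka+\ga'+\mu_0+\ga_0'+1$ of \requ{Si}. By contrast, the decomposition itself and the three expansion estimates are routine consequences of \rlem{Kx}, \rlem{BkDil}, \rlem{Dist} and Proposition~\ref{propMane}; note that the hypothesis (FA) plays no role here, entering only later to control how often returns occur.
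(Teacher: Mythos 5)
Your overall strategy is the paper's: decompose $[0,n)$ into free and bound periods, control bound periods by \rlem{Kx}(2), free periods ending in a return by \rlem{BkDil}, and the terminal free segment by Proposition~\ref{propMane}, then sum; the pairing observation (the deficit $\mu_0+1$ of a free period is covered by the surplus of the following bound period) is exactly what the paper does. Where you diverge is in the handling of the terminal unfinished bound period, and this is where I see a genuine gap. The paper does not estimate $[i_s+1,n)$ by shadowing $v_q$'s Collet--Eckmann orbit; it estimates the single step at $i_s$ by (K2), giving $\log\be-\log\ka$, and then applies Proposition~\ref{propMane} to the remaining $n-i_s-1$ iterates, yielding $(n-i_s-1)\ga'-\ga_0'\ge(n-i_s)\si-\ga'-\ga_0'$. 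When one adds $-\mu_0-1$ from the pairing on $[0,i_s)$, the total constant is exactly $-(\log\ka+\ga'+\mu_0+\ga_0'+1)=-\si_0$, with nothing left over — the definition of $\si_0$ in \requ{Si} is tailored precisely to this.

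Your route — distortion of $f^{m-1}$ around $v_q$ via \rlem{Dist} plus (CE) for $v_q$ — introduces the CE slack $\ga_0$, and the ``cancellation against the surplus of \rlem{Kx}(2)'' you invoke does not actually work. If you try to recover $\log|(f^m)'(w_q)|$ by starting from $\log|(f^{k(w_q)+1})'(w_q)|>\frac{\ga}{3}(k(w_q)+1)+\ga_0+\mu_0+1$ and subtracting the tail via (K1), you are left with a term $(k(w_q)+1)\bigl(\frac{\ga}{3}-\log\ka\bigr)$, which is negative and unbounded whenever $\log\ka>\ga/3$ (the typical situation), so the constant you obtain does not fit into $\si_0$. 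If instead you use (CE) directly, you need $\ga'+\ga_0'\ge\ga_0+\si+O(1)$, which is an extra assumption not available. The paper's use of Proposition~\ref{propMane} avoids $\ga_0$ and $k(w_q)$ altogether, and that is essential to hitting the stated constant.

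A secondary issue: you declare $q:=\min I$ to be the opening time of the terminal unfinished bound block and say ``one proceeds as above'' on $[0,q)$, but $q=\min I$ need not be a fresh return — it may lie inside a prior bound period $[j,j+k(f^j(z))+1)$ with $j\notin I$. In that case your pairing on $[0,q)$ ends in the middle of a bound period, and \rlem{Kx}(2) gives no estimate for that prefix. The paper sidesteps this by defining the returns $i_1<\dots<i_s$ inductively as $i_{p+1}=\min\{i>i_p+k_p: f^i(z)\in C(f,\de_2)\}$, so that by construction each $i_p$ begins a fresh bound period and $i_s$ is the natural stopping point, whether or not it equals $\min I$ (and when $i_s\notin I$ the $\be$-term is simply not needed since $\log\be\le 0$). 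You would need to adopt this definition rather than $q=\min I$ for the $[0,q)$ bookkeeping to close.
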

\begin{proof}
	If for all $0 \leq i < n$, $f^i(z) \notin C(f, \de_2)$, $I = \varnothing$ and we apply Proposition~\ref{propMane} to complete the proof. 
	
	Let $i_1 := \min\{0 \leq i < n \, :\, f^i(z) \in C(f, \de_2)\}$ and for each $p \geq 1$, $i_{p+1} := \min\{i_p + k(f^{i_p}(z)) < i < n \, :\, f^i(z) \in C(f, \de_2)\}$. Let $s$ be the largest index such that $i_s < \infty$.
	
	We use Lemma~\ref{lemBkDil} at $z$ and obtain
	$$\log |(f^{i_1})'(z)| > i_1\mu - \mu_0 - 1 \geq i_1 \si - \mu_0 - 1.$$
	Let $k_p := k(f^{i_p}(z))$ for all $1 \leq p \leq s$. By Lemma~\ref{lemKx}(2), we get
	$$\log |(f^{k_1 + 1})'(f^{i_1}(z))| > (k_1 + 1)\frac \ga 3 + \mu_0 + 1 \geq (k_1 + 1)\si + \mu_0 + 1.$$
	We iterate this chaining of lemmas \ref{lemBkDil} and \ref{lemKx} from $i_{p-1} + k_{p-1} + 1$ to $i_{p}$ and respectively from $i_{p}$ to $i_{p} + k_{p} + 1$. 
	
	The only exception occurs for $p=s$ if $i_s \in I$. In this case, we stop at $i_s$ and estimate directly by (K2)
	$$\log |f'(f^{i_s}(z))| > \log \d(f^{i_s}(z), \mathcal{C}(f)) - \log \ka \geq \log \be - \log \ka.$$
	We apply Proposition~\ref{propMane} from $i_s + 1$ to $n$ and obtain
	$$\log |(f^{n-i_s - 1})'(f^{i_s + 1}(z)| > (n - i_s - 1) \ga' - \ga_0' \geq (n - i_s) \si - \ga' - \ga_0'.$$
	
	If $i_s \notin I$, then we apply Proposition~\ref{propMane} from $i_s + k_s + 1$ to $n$. We conclude by summing up all above estimates.
\end{proof}

Recall that $\eta, \iota$ characterize the (FA) condition on $f$. Shrinking $\eta$ only improves condition (FA), so we may assume $\eta \leq \de_2$.

\begin{lemma}
	\label{lemAplus}
	There exists $\rho > 0$ such that for all $v \in f(\mathcal{C}(f))$ and $m > 0$, 
	there exists $G \se \N \cap (m, 2m]$ with
	\begin{equation}
	\label{equDens}
	\frac{|G|}m > 1 - \frac{4 \iota}{\ga }\left(1 + 1.2 \frac{\log \ka}{\si} \right) ,
	\end{equation}	
	such that for all $n \in G$
	\begin{equation}
	\label{equAplusRho}
	|(f^n)'(v)| \cdot a^+(v, n; f) > \rho.
	\end{equation}
\end{lemma}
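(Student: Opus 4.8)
The plan is to reduce \requ{AplusRho} to a uniform upper bound on the quantity
\[
S_n := \sum_{j=0}^{n-1} \frac{1}{|(f^{n-j})'(f^j(v))|\cdot |f'(f^j(v))|}.
\]
Indeed, since $(f^n)'(v)=(f^j)'(v)\cdot (f^{n-j})'(f^j(v))$, the definition \requ{Aplus} gives $|(f^n)'(v)|\cdot a^+(v,n;f)=(400 e\ka^2 S_n)^{-1}$, exactly as in \requ{Apd}. So it suffices to produce a universal constant $\Lambda>0$ and, for each $v\in f(\mathcal C(f))$ and each $m$, a set $G\subseteq \N\cap(m,2m]$ of density at least $1-\tfrac{4\iota}{\ga}(1+1.2\log\ka/\si)$ such that $S_n\le\Lambda$ for all $n\in G$; then \requ{AplusRho} holds with $\rho:=(400e\ka^2\Lambda)^{-1}$.

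Call $j\ge0$ a \emph{close time} (for $v$) if $f^j(v)\in C(f,\eta)$, where we have arranged $\eta<\de_0\le\de_2\le\ka^{-1}$, and write $k_j:=k(f^j(v))$ for such $j$. Declare $n\in(m,2m]$ \emph{admissible}, and put it in $G$, if for every close time $j<n$ one has $n-j\ge (k_j+1)\big(1+1.2\log\ka/\si\big)$. The non-admissible $n\in(m,2m]$ are covered, as $j$ runs over the close times $<2m$, by intervals of length $<(k_j+1)(1+1.2\log\ka/\si)$; by \rlem{Kx}(1) we have $k_j+1<-\tfrac{2}{\ga}\log|f'(f^j(v))|$, and by $\mathrm{FA}(\eta,\iota)$ applied with $2m$, $\sum_{j<2m\ \mathrm{close}}\big(-\log|f'(f^j(v))|\big)<2m\iota$; hence $\#\{n\in(m,2m]:n\notin G\}<\tfrac{4m\iota}{\ga}(1+1.2\log\ka/\si)$, which is \requ{Dens}.

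It remains to bound $S_n$ for $n\in G$. First, admissibility forces every return $i$ entering the factor $\be$ produced by \rlem{Dfn} (applied at any $f^j(v)$, $0\le j<n$) to satisfy $\dist(f^i(v),\mathcal C(f))>\eta$: such an $i$ has $i+k(f^i(v))\ge n$, which by admissibility rules out $i$ being a close time; so that factor is $\ge\eta$, and therefore $|(f^{n-j})'(f^j(v))|>\eta\, e^{(n-j)\si-\si_0}$ for every $j$. Split $\{0,\dots,n-1\}$ into free times ($f^j(v)\notin C(f,\eta)$) and close times. For a free time $j$, \textup{(K2)} gives $|f'(f^j(v))|^{-1}<\ka/\eta$, so the free terms of $S_n$ sum to at most $\tfrac{\ka e^{\si_0}}{\eta^2}\sum_{l\ge1}e^{-l\si}$, a universal constant. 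For the close times, group them into clusters: the heads $\nu_0<\nu_1<\cdots<\nu_s$ are the close times lying outside all previous bound periods (so $\nu_{i+1}\ge\nu_i+k_{\nu_i}+1$, i.e.\ the heads are separated), and cluster $i$ is $\{j\ \mathrm{close}:\nu_i\le j\le\nu_i+k_{\nu_i}\}$. At the head $\nu_i$, write $(f^{n-\nu_i})'(f^{\nu_i}(v))$ as the product of $(f^{k_{\nu_i}+1})'$ at $f^{\nu_i}(v)$, bounded below by $|f'(f^{\nu_i}(v))|^{-0.9}$ via \rlem{Kx}(2), and of $(f^{\,n-\nu_i-k_{\nu_i}-1})'$ at $f^{\nu_i+k_{\nu_i}+1}(v)$, bounded below by $\eta\,e^{(n-\nu_i-k_{\nu_i}-1)\si-\si_0}$ via \rlem{Dfn}; together with $|f'(f^{\nu_i}(v))|^{-0.1}<\ka^{0.1(k_{\nu_i}+1)}$ from \rlem{Kx}(1), the head term is at most $\tfrac{e^{\si_0}}{\eta}\ka^{0.1(k_{\nu_i}+1)}e^{-(n-\nu_i-k_{\nu_i}-1)\si}$. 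The admissibility inequality $(n-\nu_i-k_{\nu_i}-1)\si\ge1.2(k_{\nu_i}+1)\log\ka$ lets us peel off a factor $\ka^{-(k_{\nu_i}+1)}$ while keeping an exponential factor $e^{-\frac16(n-\nu_i-k_{\nu_i}-1)\si}$, so the head term is $\le \tfrac{e^{\si_0}}{\eta}e^{-\frac16(n-\nu_i-k_{\nu_i}-1)\si}$; since the heads are separated, $n-\nu_i-k_{\nu_i}-1\ge s-i$, and the sum of head terms is bounded by a universal constant. Finally, for a close time $j=\nu_i+l$ inside the bound period of $\nu_i$ ($1\le l\le k_{\nu_i}$), the shadowing estimates \rlem{Kx}(3)--(4) (with Koebe distortion on a small disk about the nearby critical value, as in the proof of \rlem{Kx}) give $|f'(f^{\nu_i+l}(v))|>|f'(f^{\nu_i}(v))|^{0.1}$ and $|(f^l)'(f^{\nu_i}(v))|\le C_0|f'(f^{\nu_i}(v))|^{-0.9}$ for a universal $C_0$; writing $(f^{n-j})'(f^j(v))=(f^{n-\nu_i})'(f^{\nu_i}(v))/(f^l)'(f^{\nu_i}(v))$ then bounds the corresponding term of $S_n$ by $C_0$ times the head term of $\nu_i$, and since there are at most $k_{\nu_i}+1$ of them per cluster and $(k_{\nu_i}+1)\ka^{-(k_{\nu_i}+1)}$ is bounded, the total close-time contribution is again a universal constant. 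Summing the two contributions yields $S_n\le\Lambda$ and finishes the proof.

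The delicate point is the treatment of close times occurring \emph{during} a bound period: they are correlated with the head of that period, and since the total number of close times below $2m$ is only $O(\iota m)$ rather than $O(1)$, one cannot afford to estimate them individually; it is the shadowing of the critical-value orbit (\rlem{Kx}(3)--(4)) that collapses each cluster to a bounded multiple of its head, after which the separation of the heads makes $S_n$ a geometric sum. The remaining bookkeeping is to choose the admissibility window with the precise factor $1+1.2\log\ka/\si$ — large enough that the $\ka^{0.1(k+1)}$ distortion losses are absorbed and a genuine exponential factor survives, yet small enough that $G$ still meets the density bound \requ{Dens}.
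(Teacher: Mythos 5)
Your overall strategy matches the paper's: reduce \requ{AplusRho} to a uniform bound on $S_n=\sum_{j<n}\exp(-d_j-d_j^n)$ via \requ{Apd}, exclude the $\theta$-widened bound windows with $\theta=1+1.2\log\kappa/\si$, count the excluded indices via (FA) and \rlem{Kx}(1) to get \requ{Dens}, use \rlem{Dfn} together with admissibility (which forces $\beta\geq\eta$) to obtain the ``Clean'' lower bound $d_j^n>\log\eta+(n-j)\si-\si_0$, and bound the free-time contribution by a geometric sum. All of that is exactly the paper's argument.

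Where you depart is the handling of close times. The paper applies the head-type estimate (its inequality \requ{Hit}) to \emph{every} close time $j\in R(n)$ — this is legitimate because $n\in G$ ensures $n-j>\theta(k_j+1)$ for every close $j<n$, not only for heads — and then splits $R(n)$ into $Q(n):=\{j:-d_j>(n-j)\si/2\}$ (where Hit gives $\exp(-d_j-d_j^n)<e^{-(n-j)\si/2}$) and its complement (where Clean alone gives the same decay up to $\eta^{-1}e^{\si_0}$). Your proof instead organizes close times into bound-period clusters with separated heads, bounds the head term via \rlem{Kx}(2), \rlem{Dfn} and admissibility, and uses the shadowing estimates \rlem{Kx}(3)--(4) plus the definition of $k(z)$ (yielding $|(f^l)'(v_{\nu_i})|\lesssim|f'(v_{\nu_i})|^{-0.9}$ for $l\leq k_{\nu_i}$) to collapse each cluster to a bounded multiple of its head. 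This is a valid alternative: it works and uses the same inputs, but it is strictly more elaborate than the paper's $Q(n)$ device, which already handles every close time without any clustering. What the cluster picture buys you is a more dynamical explanation of \emph{why} close times inside a bound period cannot hurt: they are slaved to the head by shadowing. What it costs is the extra bookkeeping you then have to do.

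On that bookkeeping, there is a small internal inconsistency you should repair. After peeling $\kappa^{-(k_{\nu_i}+1)}$ out of $e^{-(n-\nu_i-k_{\nu_i}-1)\si}$, the head term is actually $\leq\frac{e^{\si_0}}{\eta}\,\kappa^{-0.9(k_{\nu_i}+1)}\,e^{-\frac16(n-\nu_i-k_{\nu_i}-1)\si}$; you discard the factor $\kappa^{-0.9(k_{\nu_i}+1)}$ in the stated head bound, but then invoke ``$(k_{\nu_i}+1)\kappa^{-(k_{\nu_i}+1)}$ is bounded'' to absorb the $(k_{\nu_i}+1)$-fold multiplicity of the cluster — a factor you no longer have. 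Either retain the $\kappa^{-0.9(k_{\nu_i}+1)}$ explicitly, or peel a second $\kappa$-power out of the remaining $e^{-\frac16(\cdot)\si}$ using admissibility once more. Either fix is immediate and does not affect the argument's validity.
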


\begin{proof} 
	Fix $v \in f(\mathcal{C}(f))$ and let $v_j := f^j(v)$ for $j \geq 0$. For all $n > 0$ let $R(n):=\{0 \leq j < n\ :\ v_j \in C(f, \eta)\} = \{ j_1 < j_2 < \ldots < j_s\}$. Note that $s$ depends on $n$.
	Note that by (FA), for all $n > 0$ 
	\begin{equation}
	\label{equInfWR}
	\sum_{j \in R(n)} \ln |f'(v_j)| > -n \iota.
	\end{equation}
	For $i=1,\ldots,s$, let $k_i:= k(v_{j_i})$, $J_i': = \N \cap (j_i, j_i+k_i+1]$ and $J'(n):=\cup_{i=1}^s J_i'$. For any $m \geq 0$ we define
	$$G':=\N \cap (m, 2m] \sm J'(2m).$$
	Let $\th > 1$, $J_i := \N \cap (j_i, j_i + \th(k_i+1)]$ and $J(n):=\cup_{i=1}^s J_i$. As before,
	$$G:=\N \cap (m, 2m] \sm J(2m).$$
	Let us denote
	$$\d_j^n := \log |(f^{n-j})'(v_j)|, \ \d_j := \d_j^{j+1} \text{ and } \d^n := \d_0^n.$$
	By Lemma~\ref{lemKx}(1), $k_i + 1 < -2\ga^{-1}\d_{j_i}$, thus by \requ{InfWR}
	$$|J'(2m)| < 4\frac{m\iota}{\ga} \text{ and }|J(2m)| < 4\frac{m\iota\th}{\ga}.$$
	It suffices to choose $$\th := 1 + 1.2 \frac{\log \ka} {\si}$$ and show inequality \requ{AplusRho} for all $n \in G \se G'$.
	
	We employ Lemma~\ref{lemDfn} to estimate $\d_j^n$ for all $j < n$. First, observe that as $n \in G'$, $\be \geq \eta$ for all $0 \leq j < n$. Therefore, for all $j < n$ we have
	\begin{equation}
	\label{equClean}
	\d_j^n > \log\eta + (n-j)\si -\si_0.
	\end{equation}
	
	By Lemma~\ref{lemKx}(2) and \requ{Clean} for $j=j_i + k_i + 1$
	$$\d_{j_i} + \d_{j_i}^n > 0.1 \d_{j_i} + d_{j_i + k_i + 1}^n > 0.1 \d_{j_i} + (n - j_i - k_i - 1)\si - \si_0.$$
	If $n \in G$, then by Lemma~\ref{lemKx}(1) $n - j_i - k_i - 1 > (\th - 1) (k_i + 1) > -\d_{j_i} \frac{\th - 1}{\log \ka}$.
	Combine the previous two inequalities with the choice of $\th$ to obtain
	\begin{equation}\label{equHit}
	\d_{j_i} + \d_{j_i}^n > - \d_{j_i} + (n - j_i - \th(k_i + 1)) \si > -d_{j_i}, 
	\end{equation}
	as $-d_{j_i} > -\log \eta - \log \ka$.
	
	To complete the proof, using inequality \requ{Apd}, we need to bound
	$$S(n) := \sum_{j=0}^{n-1} \frac{1}{|(f^{n-j})'(v_j)| \cdot |f'(v_j) )|} = \sum_{j=0}^{n-1} \exp(- d_j - d_j^n).$$
	Let $S_0(n) := \sum\limits_{j \in R(n)} \exp(- d_j - d_j^n)$ and $S_1(n) := S(n) - S_0(n)$. By \requ{Clean} and (K2), 
	$$S_1(n) < \ka \eta^{-2} e^{\si_0} \sum_{j=0}^{n-1} e^{-j\si}.$$
	
	Let $Q(n) := \{j \in R(n)\ :\ -d_j > (n - j)\si / 2\}$. Then by inequalities \requ{Hit} and \requ{Clean} we can estimate
\begin{align*}
S_0(n) & = \sum_{j \in Q(n)} \exp(- d_j - d_j^n) + \sum_{j \in R(n) \sm Q(n)} \exp(- d_j - d_j^n)\\
&< \sum_{j \in Q(n)} e^{- \frac{(n - j) \si}2} + \sum_{j \in R(n) \sm Q(n)} e^{-\frac{(n - j) \si} 2 - \log \eta + \si_0} < \eta^{-1} e^{\si_0} \sum_{j=0}^{n-1} e^{- j\frac{\si}2}.
\end{align*}
This ends the proof.
\end{proof}

%

\begin{remark}\normalfont
The scale $\rho$ only depends on $\si, \si_0$ and $\eta$.
\end{remark}

\subsection{Using the transversality property}\label{section_trans}

From now on, we consider a local parametrization of the moduli space
near $f$, given by a holomorphic injection $F: O \times \P^1 \to \P^1$ where $O$ is a small neighborhood 
of the origin in $\C^{2d-2}$, and $F(0,\cdot)=f$. For all $\lambda \in O$, we denote $f_\lambda:=F(\lambda,\cdot)$.

In  \cite{Astorg}, the first author proves a transversality property
under the assumption that $f$ has no invariant line fields. We show that it is the case for Collet-Eckmann
rational maps which is not a flexible Latt\`es map. Though it is known by the experts, we did not find a complete proof in the literature, so we provide it here.

\begin{lemma}\label{lm:noinvlinfield}
Let $f\in\rat_d$ be Collet-Eckmann. If $f$ carries an invariant line field on its Julia set, then $f$ is a flexible Latt\`es map.
\end{lemma}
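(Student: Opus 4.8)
The plan is to combine the Collet--Eckmann hypothesis with the ergodicity of the measure of maximal entropy $\mu_f$ and the classification of rational maps admitting invariant line fields. First I would recall that a Collet--Eckmann rational map has all its critical points in the Julia set and, being non-uniformly expanding along critical orbits, has no Siegel disks, no Herman rings, and no parabolic cycles; moreover its Julia set supports no invariant line field that is the pullback of one under a ``coordinate'' map arising from integer multiplication or a Lattès structure unless $f$ is itself Lattès. The cleanest route is via the measure-theoretic dichotomy: an invariant line field on $\mathcal{J}_f$ gives a measurable conformal structure invariant under $f$, hence a Beltrami differential $\nu$ with $\|\nu\|_\infty<1$ and $f^*\nu=\nu$ $\mu_f$-almost everywhere (since the bifurcation-theoretic arguments only need invariance on the support of $\mu_f$, and $\supp\mu_f=\mathcal{J}_f$).

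The key steps, in order, would be: (1) By a theorem of Ledrappier (or the general theory, see e.g. work on exponents and dimension), the Collet--Eckmann condition forces $\mu_f$ to be an equilibrium state with positive Lyapunov exponent, and in fact $\mu_f$ has positive dimension and is exact-dimensional; this rules out the line field being supported on a set of dimension zero and forces it to be genuinely two-dimensional in nature. (2) Invoke the Mañé--Sad--Sullivan/McMullen theory: if $f$ carries an invariant line field on a positive-area-or-better invariant set, then one builds a nontrivial holomorphic motion / quasiconformal deformation of $f$ through rational maps, which by the transversality and rigidity results (or directly by McMullen's classification in \emph{Complex Dynamics and Renormalization}) implies $f$ is either a flexible Lattès map or the line field comes from a preperiodic critical orbit or a Siegel/Herman component. (3) Exclude all alternatives: preperiodic-critical line fields are incompatible with critical points being \emph{recurrent} to the Julia set with exponential derivative growth (Collet--Eckmann); Siegel disks and Herman rings are excluded because $\mathcal{C}(f)\subset\mathcal{J}_f$ and CE maps have no rotation domains; so the only survivor is the flexible Lattès case. (4) Conclude $f$ is a flexible Lattès map.

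The main obstacle I expect is step (2)--(3): making the reduction ``invariant line field on $\mathcal{J}_f$ $\Rightarrow$ Lattès'' rigorous without simply quoting McMullen wholesale. The subtlety is that McMullen's theorem classifies invariant line fields on \emph{positive measure} invariant subsets of $\mathcal{J}_f$, so one must first upgrade ``invariant a.e.\ for $\mu_f$'' to ``invariant on a positive-Lebesgue-measure set,'' which is where the Collet--Eckmann hypothesis does real work: by results of Przytycki--Rivera-Letelier--Smirnov and the $\mathrm{ExpShrink}$/topological Collet--Eckmann equivalence, CE implies the Julia set has positive Lebesgue measure only in the Lattès case, OR — and this is the cleaner path — one uses that CE maps are \emph{backward Collet--Eckmann} and non-uniformly hyperbolic, so the conformal measure, $\mu_f$, and (when $\mathrm{Leb}(\mathcal{J}_f)>0$) Lebesgue measure are mutually absolutely continuous on appropriate sets; a line field invariant for $\mu_f$ then propagates to a line field invariant for Lebesgue measure on a positive-measure set, and McMullen's classification applies. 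I would write the argument to flag precisely this absolute-continuity step as the one requiring care, citing \cite{GraSmi} and \cite{PrzRLeSmi} for the non-uniform hyperbolicity inputs and McMullen for the final classification, noting that the exotic (``rigid'') Lattès maps are excluded because they are rigid (carry no invariant line field), leaving only the flexible ones.
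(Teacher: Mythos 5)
Your proposal has a genuine gap. The central difficulty you flag is real, but the tool you reach for to resolve it does not exist as a theorem. In step~(2) you invoke ``McMullen's classification'' to conclude that an invariant line field on $\mathcal{J}_f$ forces $f$ to be flexible Latt\`es up to excludable cases; however, no such general classification is available --- the statement that a rational map carrying an invariant line field on its Julia set must be a Latt\`es map \emph{is} the No Invariant Line Fields Conjecture, which remains open. What McMullen and others prove are conditional rigidity results, and your proposal never identifies the one that actually applies here. Moreover, step~(1) and the discussion of exact dimensionality and of propagating invariance from $\mu_f$ to Lebesgue measure are red herrings: by definition, an invariant line field on $\mathcal{J}_f$ is already a $\textrm{Beltrami}$ coefficient supported on a set of positive Lebesgue measure inside $\mathcal{J}_f$, so there is nothing to upgrade. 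Finally, the fallback remark that ``CE implies the Julia set has positive Lebesgue measure only in the Latt\`es case'' is not a known theorem and is not how this argument should go.

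The ingredient you are missing is the conical (radial) Julia set. The actual proof is short: for Collet--Eckmann maps, Przytycki proved that every point of $\mathcal{J}_f$ is a conical point (Proposition~A.3.4 of \cite{Prz}), so the conical limit set is all of $\mathcal{J}_f$. There is a proven rigidity theorem (Theorem~VII.22 of \cite{rudiments}) asserting that if an invariant line field is supported inside the conical Julia set, then $f$ must be a Latt\`es map. Combining these two facts shows that a Collet--Eckmann map carrying an invariant line field is Latt\`es. One then finishes exactly as you do at the end: rigid Latt\`es maps admit no invariant line field (Corollary~3.18 of \cite{McMullen}), so $f$ must be flexible. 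The crux is thus not a measure-theoretic upgrade or a global classification, but the identification $\mathcal{J}_f = \Lambda_f$ for CE maps together with the conical rigidity theorem; your proposal never produces either ingredient.
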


\begin{proof}
Recall that a point $z_0\in\J_f$ is said to be a \emph{conical point} of $f$ if there exist $z_j\rightarrow z_0$, integers $n_j\rightarrow+\infty$ and positive numbers $\rho_j\rightarrow 0$ such that $f^{n_j}(z_j+\rho_j\cdot z)$ converges uniformly on $\D$ to some non-constant holomorphic function (see, e.g., \cite{rudiments} page 109). Denote by $\Lambda_f$ the set of conical points of $f$. Przytycki proved (see \cite{Prz} Proposition A.3.4) that for any $f\in\rat_d$ which is a Collet-Eckmann rational map, we have $\Lambda_f=\J_f$. In particular $\mathrm{Leb}
(\Lambda_f)>0$.

As a consequence, if $f$ is Collet-Eckmann and carries an invariant line field on its Julia set, by Theorem VII.22 of \cite{rudiments}, this implies that $f$ is a Latt\`es map. Finally, Corollary 3.18 of \cite{McMullen} asserts that if $f$ is a Latt\`es map and carries an invariant line field on its Julia set, then $f$ is a flexible Latt\`es map.
\end{proof}

Therefore, by Theorem B of \cite{Astorg} and Lemma \ref{lem:calculderivxi}, Collet-Eckmann rational maps satisfy the following property:
\begin{description}
	\item[Transversality (T)]
	The following limits exist (as linear forms on $T_0 O \simeq \C^{2d-2}$), and they are linearly independent:
	$$
	\tau_i :=  u \mapsto \lim_{n \to \infty} \frac{\frac{d}{dt}_{|t=0} \xi_n^i(tu)}{(f^n)'(f(c_i))} = \sum_{j=0}^\infty \frac{\frac{d}{dt}_{|t=0}F(tu, f^j(c_i)) }{(f^j)'(f(c_i))}$$
	where $\xi^i_n(\la):=f_\la^{n+1}(c_i(\la))$. 	
\end{description}

Note that the equality is just the first part of Lemma \ref{lem:calculderivxi} with different notations.
Indeed, observe that $\dot v_i = \frac{d}{dt}_{|t=0} F(tu,c_i)$.

Since the $(\tau_i)$ are linearly independent, we can choose $2d-2$ 
unit vectors $u_1, \ldots, u_{2d-2} \in \C^{2d-2}$ such that $\tau_i(u_j) \neq 0$ if and only if $i=j$. 
Moreover, there is an open subset $E$ of $\mathbb{S}_{2d-2}$ such that for all $u \in E$ and all $i$,
\begin{equation}\label{def:E}
\|\tau_i(u) \| \geq \frac{1}{2d} \min_{j} \|\tau_j(u_j)\|>0
\end{equation}
(we can take $E$ to be a small enough neighborhood of $u={\| \sum_{j=1}^{2d-2} u_j\|}^{-1}\sum_{j=1}^{2d-2} u_j$).

We fix some critical point $c_i$. From the transversality condition (T) and the definition \eqref{def:E}
of the set of directions $E$ in parameter space, we get $m_0>0$ such that for all $m \geq m_0$ and all $u \in E$, $\pa_{u}\xi_m^i(0) \neq 0$. For some  $r \geq 0$, let us define the corresponding parameter neighborhood of $0$ in direction $u$
\begin{equation}\label{equDefV}
V(u,r) := \left( \D(0, r) \cdot u\right) \cap O.
\end{equation}
For any $m \geq m_0$ and each $i$, we say that $\Ga^u( m) \geq 0$ satisfies (T1), (T2) and (T3) if:
\begin{description}
	\item[(T1)] if $V := V(u,\Ga^u( m))$, then for all $m_0 \leq j \leq m$
	$$\Dist(\xi^i_j, V) \leq 10^{-2},$$
	\item[(T2)] $\Si(u,m,V) := \sum\limits_{j=0}^{m-1} \max\limits_{\al, \be \in V} \left|\log \frac {f_\al'(\xi_{j+1}^i(\al))}
	{f_\be'(\xi_{j+1}^i(\be))} \right|\leq 10^{-2}$,
	\item[(T3)] $\diam(\xi_j^i(V)) \leq \frac 1{e \ka}$ for all $0 \leq j \leq m$.
\end{description}
Combining (T1) and (T3) with \eqref{equImg} we observe that 
\begin{equation}
\label{equGaSm}
\Ga^u(m) < |\pa_{u}\xi_m^i(0)|^{-1}.
\end{equation}

Recall that $\tau_i(u) \neq 0$ is a condition of the transversality hypothesis (T)
and the definition \eqref{def:E} of $E$. Denote by $v_i = v_i(0) := f_0(c_i(0))$.
\begin{lemma}
	\label{lemParDist}
	There exist $l_1 > 1$ and $m_1 > m_0$ such that for all $u\in E$ and $m \geq m_1$, then
	\begin{equation}
	\label{equGa}
	\Ga^u( m) = l_1^{-1} \cdot a^+(v_i, m; f_0),
	\end{equation}
	satisfies (T1), (T2) and (T3).
\end{lemma}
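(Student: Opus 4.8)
The plan is to transport the purely dynamical estimate of Lemma~\ref{lemDist} to the parameter line $\D(0,r)\cdot u$ through $0$, the bridge being the transversality property (T), which lets one compare the parameter derivative $\pa_u\xi_j^i(0)$ with the multiplier $(f_0^j)'(v_i)$. Fix $i$ and abbreviate $v_i:=v_i(0)=f_0(c_i)$, $D_j:=|(f_0^j)'(v_i)|$, $a^+:=a^+(v_i,m;f_0)$; for $l_1,m$ to be fixed later put $r:=\Ga^u(m)=l_1^{-1}a^+$ and $V:=V(u,r)$. Two facts will be used throughout. First, \eqref{equApd} (taken at step $m$), the single term $k=j$ of the sum defining $a^+$, and the bound $|f_0'|<\ka$ coming from (K1) give $a^+D_j<(400e\ka)^{-1}$ for every $0\le j\le m$, hence $rD_j<(400e\ka\,l_1)^{-1}$. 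Second, by (CE) one has $D_l\ge e^{l\ga-\ga_0}$, so the orbit $(f_0^n(v_i))_n$ never meets $\mathcal C(f_0)$ and $\sum_{l\ge0}D_l^{-1}<\infty$ with a bound depending only on $\ga,\ga_0$.

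I would next run an induction on $j$ controlling the forward orbit of $V$: for a constant $C_1=C_1(\ka,\ga,\ga_0)$ and all $0\le j\le m$,
\[\xi_j^i(\overline V)\subseteq\D\bigl(f_0^j(v_i),\rho_j\bigr),\qquad \rho_j:=C_1\,rD_j,\]
and moreover $\dist(f_0^k(v_i),\mathcal C(f_0))>100\,\rho_k$ for $0\le k\le m-1$, so that each such $\xi_k^i(V)$ stays at distance $\ge\frac12\dist(f_0^k(v_i),\mathcal C(f_0))$ from $\mathcal C(f_0)$, and — $V$ being tiny, by (K4)--(K5) — from $\mathcal C(f_\la)$ as well. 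For the inductive step one writes $\xi_j^i(\la)-f_0^j(v_i)=\bigl(f_\la(\xi_{j-1}^i(\la))-f_0(\xi_{j-1}^i(\la))\bigr)+\bigl(f_0(\xi_{j-1}^i(\la))-f_0(f_0^{j-1}(v_i))\bigr)$: the first term is $\le\ka r$ by (K1); for $l_1$ large the inductive hypothesis and \eqref{equImg} place $\xi_{j-1}^i(V)$ inside $f_0^{j-1}(\D(v_i,a^+))$, on which $f_0$ has distortion $<10^{-2}$ by Lemma~\ref{lemDist}, so the second term is $\le\frac{D_j}{D_{j-1}}(1+o(1))\rho_{j-1}$. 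Unrolling $E_j\le\frac{D_j}{D_{j-1}}(1+o(1))E_{j-1}+\ka r$ from $E_0\le\ka(1+\ka)r$ gives $E_j\lesssim rD_j\sum_{l\ge0}D_l^{-1}\lesssim rD_j$; the accumulated factors $\prod(1+o(1))$ stay bounded because $\sum_k\frac{\rho_k}{\dist(f_0^k(v_i),\mathcal C(f_0))}\asymp C_1r\sum_k\frac{D_k^2}{D_{k+1}}=\frac{C_1}{400e\ka^2 l_1}$, which is $<1$ for $l_1$ large, and the same computation together with (K2) yields the claimed separation from $\mathcal C(f_0)$.

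Granting this, (T3) is immediate: for $j\ge m_0$, \eqref{equImg} and (T) give $\diam\xi_j^i(V)\lesssim r|\pa_u\xi_j^i(0)|\lesssim r|\tau_i(u)|D_j$, and $\sup_{u\in E}|\tau_i(u)|<\infty$ by the series formula in (T) together with (K1) and (CE), hence $\diam\xi_j^i(V)<(e\ka)^{-1}$ once $l_1$ is large; the finitely many $j<m_0$ are settled by $\|\xi_j^i\|_{\mathcal C^1}<\infty$ and $r$ small. For (T2) one bounds each summand by the $\la$-variation of $f_\la'$, which is $\lesssim\ka r/\dist(\cdot,\mathcal C(f_0))$ by (K1)--(K2), plus the $z$-variation $\Dist(f_0,\xi_{j+1}^i(V))\lesssim\rho_{j+1}/\dist(\cdot,\mathcal C(f_0))$ obtained from (K3), \eqref{equDDist} and the previous step; summing and using once more $\sum_k\frac{D_k^2}{D_{k+1}}=(400e\ka^2a^+)^{-1}$ (together with the fact that $\sum_k\frac{D_k}{D_{k+1}}$ differs from it only by the finitely many terms with $D_k<1$) gives $\Si(u,m,V)\lesssim l_1^{-1}$, so $<10^{-2}$ for $l_1$ large.

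Finally (T1) follows by telescoping
\[\log(\xi_j^i)'(\la)-\log(\xi_j^i)'(\la')=\log\frac{(\xi_{m_0}^i)'(\la)}{(\xi_{m_0}^i)'(\la')}+\sum_{k=m_0}^{j-1}\log\frac{f_\la'(\xi_k^i(\la))}{f_{\la'}'(\xi_k^i(\la'))}+\sum_{k=m_0}^{j-1}\log\frac{1+R_k(\la)}{1+R_k(\la')},\]
where $R_k(\la):=\dfrac{(\pa_\la F)(\la,\xi_k^i(\la))}{f_\la'(\xi_k^i(\la))\,(\xi_k^i)'(\la)}$: the base term is as small as one likes for $l_1$ large since $(\xi_{m_0}^i)'(0)\ne0$ by the choice of $m_0$; the middle sum is dominated by $\Si(u,m,V)$; and for the last sum (T) and the distortion control give $|(\xi_k^i)'(\la)|\gtrsim D_k$ while (K2) gives $|f_\la'(\xi_k^i(\la))|\asymp D_{k+1}/D_k$, whence $|R_k(\la)|\lesssim\ka/D_{k+1}\lesssim e^{-k\ga}$, which is summable and as small as desired after enlarging $m_0$ (every integer $\ge m_0$ still has the defining property). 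One then fixes $l_1$ large (to absorb all $l_1^{-1}$-terms and to keep $V\subset O$ inside the earlier neighbourhoods), then $m_0$ large, then $m_1>m_0$ large enough that the few $m$-dependent quantities — principally $m\,\Ga^u(m)\le\frac{l_1^{-1}e^{\ga_0}}{400e\ka}\,me^{-m\ga}$, which also dominates the cruder distortion-along-orbit factors — are small for $m\ge m_1$. The one genuinely delicate point is the bookkeeping in the induction: at a deep return the individual error term is large, but $a^+$ is small by precisely the compensating factor, so it is essential that every estimate be written as a \emph{sum} with weights reconstituting $(400e\ka^2a^+)^{-1}$, which is then multiplied by $r=l_1^{-1}a^+$.
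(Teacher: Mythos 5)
Your argument is essentially the paper's: both transfer the phase-space distortion estimate of Lemma~\ref{lemDist} to the parameter line via the transversality identity $\pa_u\xi_j^i(0)\sim\tau_i(u)(f_0^j)'(v_i)$, control the forward orbit of $V$ inside $f_0^j(\D(v_i,\text{const}\cdot a^+))$, and close the telescoping for $(T1)$ by bounding the error terms $R_k$ through (CE); the scale $l_1^{-1}a^+$ makes the accumulated distortion sum proportional to $l_1^{-1}$, so $l_1$ large closes all bootstraps. The only difference is organizational — you establish the orbit inclusion first and deduce $(T3)$, $(T2)$, $(T1)$ in turn, whereas the paper runs a single induction proving the distortion bound and $\Si<10^{-2}$ simultaneously — but this is cosmetic.
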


\def\c1{11e}
\def\cj{{($*$)$_j$}}

\begin{proof} 
	By (CE) and (T), there exists $m'>m_0$ such that for all  $u\in E$:
	\begin{equation}\label{equ52}
	\ka M\sum_{j=m'}^\infty  e^{-j\gamma + \gamma_0 + 3} < 10^{-3},
	\end{equation}
	\begin{equation}\label{equ54}
	\left| \sum_{j=0}^{m'-1} \frac{\frac{d}{dt}_{|t=0} F(tu,f_0^j(c_i))}{\tau_i(u) (f_0^m)'(v_i)} - 1 \right| < 10^{-3},
	\end{equation}
	where $M := 1 + \max\{|\tau_i(u)|, |\tau_i(u)|^{-1}\ : \ u\in E\}$. Furthermore, for $k \geq m'$, one has by (T) and (CE) that
	\begin{equation}
	\label{equComp}
	\left|\log \frac{\pa_{u}\xi_k^i(0)}{\tau_i(u) (f_0^k)'(v_i)} \right| < 10^{-2} \text{ and } |\pa_{u}\xi_k^i(0)| > 2e\ka.
	\end{equation}	
	We choose $0 < \eps_1 $ small enough such that 
	$$\diam(\xi^i_j(V(u, 2\eps_1)) < e^{-1}\ka^{-1} \text{ for any $u\in E$ and all } 0 \leq j \leq m',$$
	$$\Dist(\xi_j^i, V(u, 2\eps_1)) < 10^{-3} \text{ for any $u\in E$ and all } m_0 \leq j \leq m' \text{ and }$$
	$$\Si(u,m', V(u, 2\eps_1)) < 10^{-3},$$
	recalling that $V$ was defined by \requ{DefV} and $\Si$ in condition (T2).
	Put $$l_1:=\c1\ka M$$ and choose $m_1 > m'$ such that
	$$e^{m_1\gamma_1 - \gamma_0} > \max\left(4e\ka M,\eps_1^{-1}\right).$$
	We prove the lemma for these $l_1$ and  $m_1$.

	Take any $m \geq m_1$, $u\in E$. Let
	$$\La := V(u, l_1^{-1}a^+(v_i, m; f_0)),$$
	$$W := \D(v_i, 10^{-1}a^+(v_i, m; f_0)),$$
	and prove by induction the following claim for all $m_0 \leq j \leq m$
	\begin{enumerate}
		\item[\cj] $\Dist(\xi_j^i, \La) < 10^{-2}$ and $\Si(u, j, \La) < 10^{-2}$.
	\end{enumerate}
	Since $a^+(v_i, m; f) < |(f_0^m)'(v_i)|^{-1} < \eps_1$ by the choice of $m_1$, we can get {\cj} for $m_0 \leq j \leq m'$ from the choice of $\eps_1$.
	
	Assume that for some $m' < j \leq m$, ($*$)$_k$ holds for all $m_0 \leq k < j$. Observe that $\xi_k^i(\La) \se f_0^k(W)$ for all $m' \leq k < j$. On one hand, we use \requ{Comp}, $\Dist(\xi_k^i, \La) < 10^{-2}$, \requ{Img} and the choice of $l_1$. On the other, we use Lemma~\ref{lemDist} applied to $f_0^m$ on $W$ and \requ{Img}. In particular, by Lemma~\ref{lemDist}, for all $\al \in \La$ and $m' \leq k < j$
	\begin{equation} \label{equDfxi}
	\left| \log \frac{f_0'(\xi_k^i(\al))} {f_0'(\xi_k^i(0))} \right| < 10^{-3}.
	\end{equation}
	Summing \requ{Temp1} for $k=m'$ to $j-1$ implies:
	\begin{equation*}
	\sum_{k=m'}^{j-1} \Dist(f_0, f_0^k(W))  < 10^{-3}  a^+(v_i,m;f)  400 e \ka^2\sum_{k=m'}^{j-1}\frac{|(f_0^k)'(v_i)|}{|f_0'(f_0^k(v_i))|},
	\end{equation*}
	hence, by definition of $a^+(v_i,m;f)$:
	\begin{equation}\label{equTemp111}
	\sum_{k=m'}^{j-1} \Dist(f_0, f_0^k(W))  < 10^{-3}.
	\end{equation}
	Also, for all $\al \in \La$ and $m' \leq k < j$, by (K1), ($*$)$_k$, \requ{Comp} and direct calculation, we get
	$$|f_\al'(\xi_k^i(\al)) - f_0'(\xi_k^i(\al))| < \ka |\al | < \ka l_1^{-1} a^+(v_i, m; f) < \ka |(f_0^k)'(v_i)|^{-1},$$
	$$\left| \frac{\pa_{u}\xi_{k+1}^i(\al)}{\pa_{u}\xi_{k}^i(\al)} - f_\al'(\xi_k^i(\al)) \right|=
	\frac {|\frac{d}{dt}_{|t=0} F(\al+tu,\xi_{k}^i(\al))|} {|\pa_{u}\xi_{k}^i(\al)|} < \frac{\ka e^{0.02}}{|\tau_i(u) (f_0^k)'(v_i)|}.$$
	Divide the last two inequalities by $|f_0'(\xi_k^i(\al))|$, use $\Dist(f_0, \xi_k^i(\La)) < 10^{-3}$, the triangle inequality, (CE) and \requ{52} to obtain
	$$\left| \frac{\pa_{u}\xi_{k+1}^i(\al)}{\pa_{u}\xi_{k}^i(\al)f_0'(\xi_k^i(\al))} - 1 \right| < \frac{\ka M e}{|(f_0^{k+1})'(v_i)|} < \ka M e^{-(k+1)\gamma_1 + \gamma_0 + 2} < 10^{-3},$$
	which immediately implies
	$$\left| \log \frac{\pa_{u}\xi_{k+1}^i(\al)}{\pa_{u}\xi_{k}^i(\al)} - \log f_0'(\xi_k^i(\al)) \right| < \ka M e^{-(k+1)\gamma_1 + \gamma_0 + 3}.$$
	Observe that 
	$$\Dist(\xi_{k+1}^i, \La) \leq \Dist(\xi_{k}^i, \La) + \sup_{\al, \be \in \La} \left| \log \frac{\pa_{u}\xi_{k+1}^i(\al)}{\pa_{u}\xi_{k}^i(\al)} - \log \frac{\pa_{u}\xi_{k+1}^i(\be)}{\pa_{u}\xi_{k}^i(\be)} \right|.$$
	The last two inequalities with  the choice of $\eps_1$, inequality \requ{Temp111}, the choice of $W$ and bound \requ{52} imply:
	\begin{align*}\Dist(\xi_j^i, \La) \leq & \Dist(\xi_{m'}^i, \La) + \sum_{k=m'}^{j-1} \Dist(f_0, f_0^k(W)) \\
	&  \ + 2\sum_{k=m'}^{j-1} \ka M  e^{-(k+1)\gamma_1 + \gamma_0 + 3} < 4\cdot 10^{-3}.
	\end{align*}
	
	Similarly, by the choice of $\eps_1$, inequality \requ{Temp111}, the bound for $|f_\al'(\xi_k^i(\al)) - f_0'(\xi_k^i(\al))|$ above, inequality \requ{Temp1}, the choice of $W$ and bound \requ{52} imply
	$$\Si(u,j, \La) < \Si(u, m', \La) + \sum_{k=m'}^{j-1} \left( \Dist(f_0, f_0^k(W)) + 2\ka e^{-k\gamma_1+\gamma_0+1}\right) < 3\cdot 10^{-3}. $$
	Therefore {\cj} holds for all $m_0 \leq j \leq m$. Thus by inequality \requ{Comp}, the definition of $\La$ and inequality \requ{Apd}
	$$\diam(\xi_j^i(\La)) < e M|(f_0^j)'(v_i)| \diam(\La) < e^{-1}\ka^{-1}.$$
	Conditions (T1-3) are satisfied by $\Ga^u(m) := l_1^{-1} \cdot a^+(v_i, m; f)$, which completes the proof.
\end{proof}

\subsection{Proof of Proposition \ref{th:distortionestimate}}

\begin{proof}[Proof of Proposition \ref{th:distortionestimate}] 
	We assume that we have a rational map $f=f_0$ that satisfies all the assumptions of Proposition~\ref{th:distortionestimate} and such that
	$$\theta := 1 - \frac{4 \iota}{\ga }\left(1 + 1.2 \frac{\log \ka}{\si} \right) > 1 - \frac{1}{2d - 2},$$
	where $\si$ is defined by \requ{Si}.
	
	We apply Lemma~\ref{lemParDist}. 
	Fix $\rho > 0$ provided by Lemma~\ref{lemAplus}, applied for each $i=1,\ldots,2d-2$. The intersection $G$ of sets $G_1,\ldots,G_{2d-2}$ of densities at least $\theta$ in $\N \cap (m,2m]$ has positive density, thus the union of such sets $G$ is infinite.
	
	By inequalities \requ{Ga}, \requ{AplusRho} and \requ{Comp}, for each $n \in G$ and $i=1,\ldots,2d-2$ we compute, for $u \in E$:
	$$\Ga^i( n) = l_1^{-1} \cdot a^+(v_i, n; f) > \frac{l_1^{-1} \rho}{|(f^n)'(v_i)|} > \frac{l_1^{-1} \rho M}{e |\pa_{u}\xi_n^i(0)|},$$
	where $M := 1 + \max\{|\tau_i(u)|, |\tau_i(u)|^{-1}\ : \ u\in E\}$. 
	The bounds for $r_{n,i}:=\Ga^i( n)$ compared to $C:=e^{2} l_1  M \rho^{-1} $ follow from the bounds \requ{Comp}, \requ{Img} and (T1) in the definition of $\Ga^i( n)$.
	This gives the lower bound for $r_{n,i} |(f^n)'(v_i)|$.
On the other hand, \eqref{equGaSm} gives
\[r_{n,i} |(f^n)'(v_i)|<\frac{|(f^n)'(v_i)|}{|\pa_{u}\xi_n^i(0)|}\leq 2 M\]
for all $n\in G$ large enough and
the upper bound follows.
	This completes the proof.
\end{proof}

\begin{theorem}\label{theo_largescale}
	Let $f$ be a rational map satisfying the conditions of Proposition \ref{th:distortionestimate}. Then $f$ satisfies the 
	large scale condition.
\end{theorem}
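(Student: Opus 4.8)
The plan is to combine the distortion estimate of Proposition~\ref{th:distortionestimate} with the Sibony--Wong extension theorem and the transversality property (T) in order to build the submanifold and the neighborhoods witnessing the large scale condition. Throughout write $N:=2d-2$. First I would fix the slice: take $\Lambda_f\subset\rat_d$ to be the image of the local parametrization $F\colon O\times\P^1\to\P^1$ of a neighborhood of $f$ in moduli space used in Section~\ref{section_trans}, so $\dim\Lambda_f=N$, $F(0,\cdot)=f$, $f_\lambda:=F(\lambda,\cdot)$, $\xi_n^i(\lambda):=f_\lambda^{n+1}(c_i(\lambda))$, $v_i:=f(c_i)$. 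By Lemma~\ref{lm:noinvlinfield} the Collet--Eckmann map $f$ carries no invariant line field on $\J_f$, so (T) holds: the forms $\tau_1,\dots,\tau_N$ on $T_0O\simeq\C^N$ are linearly independent; I would use the adapted basis $u_1,\dots,u_N$ and the open set $E\subset\mathbb{S}_{2d-2}$ from just before \eqref{def:E}, so that $\tau_i(u_j)=\beta_i\delta_{ij}$ with $\beta_i\neq 0$, and in the dual coordinates $\lambda=\sum_i s_iu_i$ one has $\tau_i(\lambda)=\beta_is_i$. Proposition~\ref{th:distortionestimate} then supplies $C>1$, the set $E$ (with $\lambda(E)>0$), an infinite $G\subset\N$, and radii $r_{n,i}$ with $\Dist(\xi_n^i,\D(0,r_{n,i})u)<\epsilon$ for all $u\in E$ and $C^{-1}<r_{n,i}\,|(f^n)'(v_i)|<C$, where $\epsilon$ is the (small) distortion threshold of the proposition. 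Since $\mathcal{C}(f)\subset\J_f$, the orbit of each $v_i$ stays in the compact Julia set, so after extraction I may assume $\xi_n^i(0)\to x_i\in\J_f$; and — this is the delicate point, see below — I would choose \emph{individual} times $n_{k,i}$, one per critical point, so that the multipliers $|(f^{n_{k,i}})'(v_i)|$ stay comparable across $i$ uniformly in $k$, and set $\un_k:=(n_{k,1}+1,\dots,n_{k,N}+1)$ and $x:=(x_1,\dots,x_N)$.

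The core step is to promote the one–dimensional, line–by–line distortion estimate to an estimate on a ball. Fixing $i$ and writing $n:=n_{k,i}$, $r:=r_{n,i}$, working in a fixed chart around $x_i$, the bound $\Dist(\xi_n^i,\D(0,r)u)<\epsilon$ together with \requ{Comp} (i.e. $\partial_u\xi_n^i(0)=\tau_i(u)(f^n)'(v_i)e^{\zeta}$ with $|\zeta|<\epsilon$) shows that the holomorphic function $\lambda\mapsto(\xi_n^i(\lambda)-\xi_n^i(0))/(f^n)'(v_i)-\tau_i(\lambda)$ vanishes at $0$ and is $O(\epsilon\|\lambda\|)$ on each line $\D(0,r)u$, $u\in E$, uniformly. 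Applying Theorem~\ref{tm:sibonywong} to this function, for the family of lines $\{\C u:u\in E\}$ of mass $\lambda(E)>0$, on each ball $\B(0,s)$ with $0<s\le r$, the sup bound propagates the estimate to $\B(0,\tilde\rho s)$ with $\tilde\rho=\tilde\rho(\lambda(E),N)\in(0,1]$ fixed; taking $s=\|\lambda\|/\tilde\rho$ gives, on $\B(0,\tilde\rho r_{n,i})$,
\[
\xi_n^i(\lambda)=\xi_n^i(0)+(f^n)'(v_i)\bigl(\beta_is_i+E_{n,i}(\lambda)\bigr),\qquad |E_{n,i}(\lambda)|\le c_\ast\,\epsilon\,\|\lambda\|,
\]
where $c_\ast$ is uniform; fixing $\epsilon$ small enough makes $c_\ast\epsilon$ as small as needed.

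Next I would set $\Omega_k:=\{\sum_i s_iu_i:|s_i|<c_0\,r_{n_{k,i},i}\ \forall i\}$ for a small fixed $c_0>0$. Since the $r_{n_{k,i},i}\asymp|(f^{n_{k,i}})'(v_i)|^{-1}$ are comparable and tend to $0$, the $\Omega_k$ form a basis of neighborhoods of $0$ in $\Lambda_f$, and for $c_0$ small each $\Omega_k$ lies inside every ball $\B(0,\tilde\rho r_{n_{k,i},i})$, so the displayed estimate applies on $\overline{\Omega_k}$. By \requ{Img}, as $s_i$ runs over $\D(0,c_0r_{n_{k,i},i})$ the leading term $(f^{n_{k,i}})'(v_i)\beta_is_i$ sweeps a disc of radius $\asymp c_0\,r_{n_{k,i},i}|(f^{n_{k,i}})'(v_i)|$, which is $\asymp c_0$; moreover $|E_{n_{k,i},i}|$ is, on $\overline{\Omega_k}$, a uniformly small multiple of that radius once $\epsilon$ was fixed small. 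Hence, fixing $\delta>0$ small, one gets simultaneously (i) $\mathfrak{V}_{\un_k}(\Omega_k)\supseteq\prod_i\D(\xi^i_{n_{k,i}}(0),\delta)$, and (ii) for every $\lambda\in\partial\Omega_k$ — where some $|s_i|=c_0r_{n_{k,i},i}$ — one has $|\xi^i_{n_{k,i}}(\lambda)-\xi^i_{n_{k,i}}(0)|\ge\delta$ for that index $i$. Replacing Euclidean discs by $\P^1$–discs for $k$ large (legitimate since $\xi^i_{n_{k,i}}(0)\to x_i$), (ii) says that the connected component $\Omega'_k$ through $0$ of $\{\lambda\in\Omega_k:\mathfrak{V}_{\un_k}(\lambda)\in\D^N_\delta(\mathfrak{V}_{\un_k}(0))\}$ stays off $\partial\Omega_k$, hence $\Omega'_k\Subset\Omega_k$; since $\mathcal{V}_{\un_k}$ is the graph of $\mathfrak{V}_{\un_k}$, its connected component through $(0,\mathfrak{V}_{\un_k}(0))$ inside $\Omega_k\times\D^N_\delta(\mathfrak{V}_{\un_k}(0))$ is the graph over $\Omega'_k$, hence is contained in $\Omega'_k\times\D^N_\delta(\mathfrak{V}_{\un_k}(0))$. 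This is precisely the large scale condition for $(c_1,\dots,c_N)$ in $\Lambda_f$, and hence (invoking Proposition~\ref{th:distortionestimate} to know $f$ meets its hypotheses) the theorem.

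I expect the main obstacle to be the selection of the times $n_{k,i}$ making the multipliers $|(f^{n_{k,i}})'(v_i)|$ mutually comparable. Without it the Sibony--Wong balls $\B(0,\tilde\rho r_{n_{k,i},i})$ have radii of wildly different sizes, and no single connected neighborhood $\Omega_k$ can at once lie inside all of them — so that the distortion estimate applies to every $\xi^i_{n_{k,i}}$ on $\Omega_k$ — and be spread out by $\gtrsim r_{n_{k,i},i}$ in the $i$-th coordinate for every $i$, which is what forces escape through $\partial\Omega_k$ and thus $\Omega'_k\Subset\Omega_k$. Obtaining such a simultaneous comparison should rely on the elementary facts that consecutive multipliers $|(f^n)'(v_i)|$ differ by a factor at most $\|f'\|_\infty$ and diverge to $+\infty$, together with the near–full density — when $\iota$ is small — of the sets $G_i\subset\N$ produced by Lemma~\ref{lemAplus} in each dyadic window of times; turning this into a genuinely uniform simultaneous comparison (or, alternatively, running the Sibony--Wong extension on a domain anisotropically adapted to the $(f^n)'(v_i)$ while keeping the line family of fixed positive measure) is the technical crux of the proof.
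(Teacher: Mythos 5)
Your overall architecture (local parametrization $F$, transversality (T), the adapted basis $u_1,\dots,u_{2d-2}$ and the cone $E$, Sibony--Wong to promote line-wise distortion control to a ball, then open mapping near the limit) agrees with the paper's. But you make a structural choice the paper does not make, and that choice creates the gap you yourself flag.

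You use a \emph{different} iteration time $n_{k,i}$ for each critical point, and then you must arrange that the multipliers $|(f^{n_{k,i}})'(v_i)|$ be mutually comparable so that the Sibony--Wong balls $\B(0,\tilde\rho r_{n_{k,i},i})$ have commensurate radii and a single connected $\Omega_k$ can sit inside all of them. You acknowledge that this simultaneous calibration of times is ``the technical crux,'' and the heuristics you offer (bounded jumps of $|(f^n)'(v_i)|$, dyadic windows, high density of the $G_i$'s) do not obviously close it: the Collet--Eckmann family of conditions gives exponential \emph{lower} bounds but no control on how the expansion rates \emph{compare} across critical orbits, so within a window $(m,2m]$ the quantities $|(f^n)'(v_i)|$ may differ by factors that blow up with $m$, and density alone gives no reason two exponentially growing sequences should be hit at comparable values simultaneously. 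As written this is a genuine gap.

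The paper avoids the problem entirely by keeping the \emph{same} time $n\in G$ for all $i$ (Proposition~\ref{th:distortionestimate} already furnishes a single $G$ good for every $i$) and instead rescaling the \emph{parameter} by a factor $r_{n,i}$ that depends on the coordinate index: it sets $\phi_n(\lambda):=\bigl(\xi_n^i(r_{n,i}\lambda)-\xi_n^i(0)\bigr)_{1\le i\le 2d-2}$. Each component of $\phi_n$ now has derivative at $0$ of size $r_{n,i}\,|\tau_i(u)|\,|(f^n)'(v_i)|\asymp 1$, so the line-wise distortion estimate of Proposition~\ref{th:distortionestimate} makes $\phi_n$ uniformly bounded on $\D\cdot E$; Sibony--Wong gives a uniform bound on a ball $\B(0,\tilde\rho)$; $(\phi_n)_{n\in G}$ is then a normal family whose sublimit $\phi$ has invertible $D\phi(0)$ (by (T) and $r_{n,i}|(f^n)'(v_i)|\ge 1/C$), and openness of $\phi$ plus normality give both the onto-a-polydisk estimate and vertical-likeness for large $n$. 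This is essentially the alternative you gesture at in your last sentence (``running Sibony--Wong on a domain anisotropically adapted to the $(f^n)'(v_i)$''), but you do not develop it and instead pursue the time-calibration route. The moral is that one need never compare the $|(f^n)'(v_i)|$ across $i$: one normalizes each coordinate of the rescaled map $\phi_n$ independently and lets Sibony--Wong plus normality do the transfer, rather than trying to choose times that equalize the multipliers.
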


\begin{proof}	
	Let $f=f_0$ be such a rational map, and let 
	$(f_\lambda)_{\lambda \in O}$ be a local holomorphic parametrization of the moduli space,
	where $O$ is some neighborhood of the origin in $\C^{2d-2}$. 
	Recall that by the transversality property (T), there are unit vectors $(u_j)_{1 \leq j \leq 2d-2}$ 
	in $\C^{2d-2}$ such that $\tau_i(u_j)\neq0$ if and only if $i=j$. Let $r_{n,i}$, $G\subset \N$ and $E\subset\mathbb{S}_{2d-2}$ be provided by Proposition \ref{th:distortionestimate}. Recall that $E$ is an open neighborhood
	of $\| \sum_{j=1}^{2d-2}u_j\|^{-1}\sum_{j=1}^{2d-2} u_i$ in the unit sphere $\mathbb{S}_{2d-2}$ of $\C^{2d-2}$, such that for all $1 \leq j \leq 2d-2$
	we have $\tau_j(u) \neq 0$.
	We now fix an affine chart on $\mathbb{P}^1$ for the rest of the proof.
	
	For all $n \in G$, let
	\begin{equation*}
	\phi_n(\lambda):=\left(\xi_{n}^i(r_{n,i}\lambda) - \xi_n^i(0)\right)_{1 \leq i \leq 2d-2}
	\end{equation*}
	As $r_{n,i}\asymp |(f^n)'(v_i)| ^{-1}\to 0$ when $n \to \infty$, for all $i$ by transversality property (T), for all $n\in G$ large enough
	$\phi_n$ is defined in the neighborhood of a ball of fixed size in $\C^{2d-2}$. Up to rescalling, we may assume it is the unit ball 
	$\B \subset \C^{2d-2}$.
	Observe that for all $1 \leq i \leq 2d-2$, we have
	\begin{equation*}
	\frac{d}{dt}_{|t=0} \phi_n^i(tu) = r_{n,i} D\xi_n^i(0) \cdot u
	\end{equation*}
	so that by the definitions of $\tau_i$ we have
	\begin{equation*}
	\frac{d}{dt}_{|t=0} \phi_n^i(tu) = r_{n,i} \tau_i(u) (f^n)'(v_i) +o(1), \text{ as  }n\to\infty.
	\end{equation*}
	Using again Proposition~\ref{th:distortionestimate} and inequality \eqref{def:E}, we find that there is a constant 
	$C'>0$ depending only on $f$ such that for all $n \in G$ and all $u\in E$,
	\begin{equation*}
	\frac{1}{C'} \leq \left|	\frac{d}{dt}_{|t=0} \phi_n^i(tu) \right| \leq C'.
	\end{equation*}

	From the distortion control of Proposition \ref{th:distortionestimate} and equation \eqref{equImg}, 
	we therefore have that for all $u \in E$, for all $t \in \D$, and for all $1\leq i\leq2d-2$,
	\begin{equation*}
	|\phi_n^i(tu)| \leq \frac{11}{10} C'.
	\end{equation*}

	Now applying Sibony-Wong's theorem (Theorem \ref{tm:sibonywong}) to $\phi_n$ on $E$, we see that there is a constant $0<\tilde \rho<1$ 
	independent 
	of $n \in G$ such that for all $z \in  \B(0,\tilde \rho)$, 
	\begin{equation*}
	\| \phi_n(z)\| \leq (2d-2)\frac{11}{10} C'.
	\end{equation*}
	Therefore the sequence $(\phi_n)_{n \in G}$ is normal near the origin, hence up to extraction
	converges to some holomorphic map $\phi: \B(0, \tilde \rho) \to \C^{2d-2}$.
	By the transversality property (T) and the fact that 
	$r_{n,i} |(f^n)'(v_i)| \geq \frac{1}{C}$, we have that $D\phi(0)$ is invertible. 
	Therefore $\phi(\B(0, \tilde \rho))$ contains 
	some ball $\B(0,2r)$ for some $r>0$, and so for all large enough $n \in G$, $\phi_{n}(\B(0, \tilde \rho))$ 
	contains the ball $\B(0,r)$.
	Recalling the definition of $\phi_{n}$, we claim that this implies that $f$ satisfies the large scale condition for all $n\in G$
	with 
	$$\Omega_{n}:=\B\left(0,\max_{1\leq i \leq 2d-2} r_{n,i}\right).$$
Indeed, since $D\phi(0)$ is invertible, in particular, the graph of $\phi$ is vertical-like in a neighborhood of the origin, restricting $r$ if necessary; by normality, this is also the case for $\phi_n$ for $n$ large enough which is exactly what we want.  
\end{proof}

\section{The proofs of Theorem~\ref{tm:leb} and Corollary~\ref{tm:approxhyperbolic}}\label{sec:mainresults}


A map $f\in\rat_d$ is a \emph{flexible Latt\`es map} if there exist an elliptic curve $E$, an affine map $\ell:E\rightarrow E$ with integral linear part and finite branched cover $\pi:E\rightarrow\P^1$ such that the following diagram commutes
\[ 	\begin{tikzcd}
E \arrow{r}{\ell} \arrow{d}{\pi} & E  \arrow{d}{\pi}\\
\P^1 \arrow{r}{f} &\P^1
\end{tikzcd}\]
It follows from the definition that any flexible Latt\`es map is strongly Misiurewicz, that $\mathcal{P}(f)\cap\mathcal{C}(f)=\varnothing$ and that $\mathrm{Card}(\mathcal{P}(f))\leq 4$ and that its degree $d$ is the square of the linear part of $\ell$ (see e.g.~\cite{Milnor}). In particular, if $f$ is a flexible Latt\`es map, then $d\geq4$.

\begin{proof}[Proof of Theorem~\ref{tm:leb}]
Recall that a rational map is strongly Misiurewicz if all its critical points are preperiodic to repelling cycles. 
Let us denote by $\mathcal{X}\subset\mathcal{M}_d$ the set of all conjugacy classes of strongly Misiurewicz degree $d$ rational maps and by $\mathcal{X}^*\subset\mathcal{X}$ those classes of rational maps with $\mathcal{P}(f)\cap \mathcal{C}(f)=\varnothing$ with simple critical values. Choose $[f]\in\mathcal{X}^*$. If $d\leq 3$, then $f$ can not be a flexible Latt\`es map.
If $d=\deg(f)\geq4$, since $f$ has simple critical values, $\mathrm{Card}(\mathcal{P}(f))\geq \mathrm{Card}(f(\mathcal{C}(f)))=2d-2>4$ hence $\mathcal{X}^*$ does not contain flexible Latt\`es maps. Observe that $\mathcal{X}^*$ is countable.

~

Pick any $[f]\in\supp(\mu_\bif)$ and $\Omega\subset\mathcal{M}_d$ an open neighborhood of $[f]$. According to \cite[Main Theorem]{buffepstein}, the set $\mathcal{X}^*$ is dense in $\supp(\mu_\bif)$, hence in particular $\mathcal{X}^*\cap\Omega\neq \varnothing$. 
Choose now such $[f_0]\in \mathcal{X}^*\cap \Omega$.  By the above argument, we apply Theorem~\ref{tm:abundance}:
there exist $\mu(f_0),\mu_0(f_0),\gamma(f_0),\gamma_0(f_0)>0$ and $\hat{\alpha}(f_0)>0$ such that for all $\alpha < \min(\frac{\gamma(f_0)}{200}, \hat \alpha(f_0))$, 
there exist $\hat{\eta}(f_0)>0$ and $\hat{\iota}(f_0)>0$ such that for all $\eta < \hat \eta(f_0)$ and for all
$\iota < \hat \iota(f_0)$, the map $f_0$ is a Lebesgue point of density of rational maps satisfying
CE($\gamma(f_0),\gamma_0(f_0)$), CE2($\mu(f_0),\mu_0(f_0)$), BA($\alpha$) and  FA($\eta, \iota$).

We let $CE_{[f_0]}$ be those conjugacy classes $[g]\in\Omega$ of maps $g$ satisfying the properties CE($\gamma(f_0),\gamma_0(f_0)$), CE2($\mu(f_0),\mu_0(f_0)$), BA($\alpha$) and  FA($\eta, \iota$) with $\eta < \hat \eta(f_0)$ and $\iota < \hat \iota(f_0)$. 
Up to reducing $\hat{\iota}(f_0)$, we may assume that such $[g]$ satisfies the Large scale condition by Theorem~\ref{tm:bigscale}.
The set $CE_{[f_0]}$ is contained in $\Omega\cap\supp(\mu_\bif)$ by Theorem~\ref{tm:suppTm} and has positive Lebesgue volume by Theorem~\ref{tm:abundance}. This concludes the proof of Theorem~\ref{tm:leb}.
\end{proof}

Combining Theorem~\ref{tm:bigscale} with Theorem~\ref{tm:suppTm}, we also get the following result. In the light of the proof above, it is an enhanced version of Corollary \ref{tm:approxhyperbolic}.

\begin{theorem}\label{tm:approx}
Let $\gamma, \gamma_0, \mu, \mu_0>0$, let $\alpha<\gamma/200$ and let $\eta>0$.
Let $\iota>0$ be given by Theorem~\ref{tm:bigscale}. Assume that $f\in\rat_d$ has simple critical points and satisfies CE($\gamma,\gamma_0$), CE2($\mu,\mu_0$), BA($\alpha$) and  FA($\eta, \iota$), then $[f]\in\supp(\mu_\bif)$. In particular,
\begin{enumerate}
\item $f$ is approximated by strongly Misiurewicz rational maps;
\item for any given $\theta_1,\ldots,\theta_{2d-2}\in\R\setminus2\pi\Z$, $f$ is approximated by rational maps having $2d-2$ distinct neutral cycles with respective multipliers $e^{i\theta_1},\ldots,e^{i\theta_{2d-2}}$;
\item for any given $w_1,\ldots,w_{2d-2}\in\D$, $f$ is approximated by hyperbolic rational maps with $2d-2$ distinct attracting cycles with respective multipliers $w_1,\ldots,w_{2d-2}$.
\end{enumerate}
\end{theorem}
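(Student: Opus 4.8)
The plan is to split the theorem into two parts: first establish that $[f]\in\supp(\mu_\bif)$, which is where all the machinery built in the paper gets used; then deduce items (1)--(3) as formal consequences of the known dynamical descriptions of $\supp(\mu_\bif)$. Write $N:=2d-2$. For the first part I would normalise the local picture: given $f$ as in the statement, shrink a neighbourhood $O\ni f$ in $\rat_d$ and pick a holomorphic parametrisation $F:O\times\P^1\to\P^1$ so that (K1--6) hold for some constant $\ka>0$; this is always possible. Since $f$ has simple critical points and satisfies CE($\ga,\ga_0$), CE2($\mu,\mu_0$), BA($\al$) with $\al<\ga/200$ and FA($\eta,\iota$) for the constant $\iota$ furnished by Theorem~\ref{tm:bigscale}, that theorem --- equivalently Theorem~\ref{theo_largescale} --- shows that $f$ satisfies the large scale condition. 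Next I would observe, exactly as in the proof of Proposition~\ref{propMane}, that because every critical orbit of $f$ obeys the Collet--Eckmann estimate, the classification of Fatou components gives $\J_f=\P^1$; in particular $\omega(c)\subset\J_f$ for every $c\in\mathcal{C}(f)$. The hypotheses of Theorem~\ref{tm:suppTm} are then all in place, and it yields $[f]\in\supp(\mu_\bif)$, i.e.\ $f\in\supp(T_\bif^{N})$.

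Then I would read off the three approximation statements, using that $\Pi:\rat_d\to\mathcal{M}_d$ is open and that the properties at stake (being strongly Misiurewicz; carrying $N$ distinct cycles with prescribed multipliers) are conjugacy invariants, so that approximating $[f]$ in $\mathcal{M}_d$ by classes with the relevant property is the same as approximating $f$ in $\rat_d$ by such maps. Item (1) is immediate from the Main Theorem of~\cite{buffepstein}, which states that the conjugacy classes of strongly Misiurewicz maps form a dense subset of $\supp(\mu_\bif)$. For items (2) and (3) I would invoke the equidistribution results of Bassanelli--Berteloot~\cite{BB1} and of the second author~\cite{neutral}: for any fixed tuple $(w_1,\dots,w_N)$ with $w_j\in\overline\D\setminus\{1\}$, the parameters carrying $N$ distinct cycles with these respective multipliers accumulate on all of $\supp(\mu_\bif)$, the associated loci giving rise to sequences of currents converging to $T_\bif^N$ as the periods go to infinity. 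Taking $w_j=e^{i\theta_j}$ with $\theta_j\notin 2\pi\Z$ produces $N$ distinct neutral cycles and gives item (2); taking $w_j\in\D$ produces $N$ distinct attracting cycles, which forces $f$ to be hyperbolic, and gives item (3).

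The only point that seems to require care --- the rest being the chart normalisation, the passage $\mathcal{M}_d\leftrightarrow\rat_d$, and the observation $\J_f=\P^1$, all routine --- is the exact form of the results of~\cite{BB1,neutral} needed for (2)--(3), namely that for \emph{every} tuple of multipliers in $\overline\D\setminus\{1\}$ the corresponding parameters accumulate on \emph{all} of $\supp(\mu_\bif)$. If those references already give this inclusion, nothing more is needed. If they are stated only for generic tuples, I would close the gap by a continuity argument: the wedge products of multiplier currents associated to the loci $\Per_{n_1}(w_1),\dots,\Per_{n_N}(w_N)$ depend continuously on $(w_1,\dots,w_N)$ and keep the same total mass as $T_\bif^N$, so the support of any weak limit cannot drop below $\supp(\mu_\bif)$ on a dense set of tuples, and a further limiting argument then recovers the statement for all tuples.
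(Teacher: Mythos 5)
Your argument is essentially the paper's: combine Theorems~\ref{tm:bigscale} and~\ref{tm:suppTm} to land $[f]$ in $\supp(\mu_\bif)=\supp(T_\bif^{2d-2})$, then read off (1)--(3) from density results for Misiurewicz parameters, neutral-multiplier parameters, and hyperbolic parameters in $\supp(\mu_\bif)$. You usefully supply two steps the paper elides: that (K1--6) can be arranged by shrinking the parametrisation around $f$, and that $\omega(c)\subset\J_f$ because $\J_f=\P^1$ here (exactly as in the proof of Proposition~\ref{propMane}), which is needed to invoke Theorem~\ref{tm:suppTm}. For items (2) and (3) the paper cites \cite[Theorem~1.1]{neutral} and Theorem~134 of \cite{bsurvey} respectively, and those references do give the density at \emph{every} point of $\supp(\mu_\bif)$ for \emph{every} admissible tuple of multipliers, so your hedging is unnecessary; note, however, that the fallback continuity argument you sketch would not by itself close the gap you worry about, since weak convergence of the relevant currents does not force the support of a limit to contain limits of points chosen in the approximating supports.
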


\begin{proof}
Combining Theorem~\ref{tm:bigscale} with Theorem~\ref{tm:suppTm}, we see that for any such Collet-Eckmann rational map $f$, the class $[f]$ belongs to $\supp(\mu_\bif)$, or equivalently that $f\in\supp(T_\bif^{2d-2})$. The three points respectively follow from \cite[Theorem 1]{buffepstein}, from \cite[Theorem 1.1]{neutral} and from Theorem 134 of \cite{bsurvey}.
\end{proof}

The support of the measure $\mu_\bif$ has been proven to be the closure of classes of rational maps having a maximal number of neutral cycles with given multipliers. The following is a direct consequence of Theorem~\ref{tm:leb} and \cite[Theorem 1.1]{neutral}.

\begin{corollary}\label{positivity_neutral}
Fix any $\theta_1,\ldots,\theta_{2d-2}\in\R\setminus2\pi\Z$, and let $\mathcal{N}(\theta_1,\ldots,\theta_{2d-2})$ be the set of degree $d$ rational maps having $(2d-2)$ distinct neutral cycles of respective multipliers $\exp(i\theta_1),\ldots,\exp(i\theta_{2d-2})$. Then
\[\mathrm{Vol}_{\mathcal{M}_d}\left(\overline{\{[f]\in\mathcal{M}_d\, ; \ f\in \mathcal{N}(\theta_1,\ldots,\theta_{2d-2})\}}\right)>0.\]
\end{corollary}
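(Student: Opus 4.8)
The plan is to obtain this as an immediate consequence of Theorem~\ref{tm:leb} together with the dynamical characterization of $\supp(\mu_\bif)$ established in \cite[Theorem 1.1]{neutral}. First I would record that $\supp(\mu_\bif)$ has positive $\mathrm{Vol}_{\mathcal{M}_d}$-measure: the measure $\mu_\bif$ is a non-zero positive measure on $\mathcal{M}_d$ (equivalently $T_\bif^{2d-2}\neq 0$, which is classical), so its support is non-empty; picking any $[f]\in\supp(\mu_\bif)$ and any open neighborhood $\Omega\subset\mathcal{M}_d$ of $[f]$, Theorem~\ref{tm:leb} produces a subset $CE_\Omega\subset\Omega\cap\supp(\mu_\bif)$ with $\mathrm{Vol}_{\mathcal{M}_d}(CE_\Omega)>0$; hence $\mathrm{Vol}_{\mathcal{M}_d}(\supp(\mu_\bif))>0$.

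Next, for a fixed choice of $\theta_1,\ldots,\theta_{2d-2}\in\R\setminus 2\pi\Z$, I would invoke \cite[Theorem 1.1]{neutral}, which identifies $\supp(\mu_\bif)$ with the closure of the set of conjugacy classes $[f]$ such that $f\in\mathcal{N}(\theta_1,\ldots,\theta_{2d-2})$. Combining the two facts yields
\[\mathrm{Vol}_{\mathcal{M}_d}\left(\overline{\{[f]\in\mathcal{M}_d\,;\ f\in\mathcal{N}(\theta_1,\ldots,\theta_{2d-2})\}}\right)=\mathrm{Vol}_{\mathcal{M}_d}(\supp(\mu_\bif))>0,\]
which is the desired conclusion.

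Since both ingredients are already available, there is essentially no obstacle; the only minor point to check is that Theorem~\ref{tm:leb} is formulated starting from a point of $\supp(\mu_\bif)$, so one must first note that this support is non-empty. An alternative route that avoids quoting \cite[Theorem 1.1]{neutral} in full strength would be to use Theorem~\ref{tm:approx}(2) directly: the positive-volume set $CE_\Omega$ of Collet--Eckmann maps furnished by Theorem~\ref{tm:leb} consists of maps approximated by elements of $\mathcal{N}(\theta_1,\ldots,\theta_{2d-2})$, hence $CE_\Omega$ is contained in $\overline{\{[f]\,;\ f\in\mathcal{N}(\theta_1,\ldots,\theta_{2d-2})\}}$, which therefore has positive volume.
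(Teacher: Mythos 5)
Your proposal is correct and follows the same route as the paper, which states the corollary is a direct consequence of Theorem~\ref{tm:leb} and \cite[Theorem 1.1]{neutral}: Theorem~\ref{tm:leb} yields $\mathrm{Vol}_{\mathcal{M}_d}(\supp(\mu_\bif))>0$, and the cited theorem identifies $\supp(\mu_\bif)$ with the closure of the relevant set of conjugacy classes. Your alternative via Theorem~\ref{tm:approx}(2) is also valid and amounts to unpacking the same ingredient.
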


It is actually difficult to exhibit one example of such a rational map. Here, we prove that they are dense in a set which is \emph{not} negligible.

 \bibliographystyle{short}
 \bibliography{biblio}

\def\cprime{$'$}
\begin{thebibliography}{PRLS}

\bibitem[Asp1]{Aspenberg1}
Magnus Aspenberg.
\newblock Rational {M}isiurewicz maps are rare.
\newblock {\em Comm. Math. Phys.}, 291(3):645--658, 2009.

\bibitem[Asp2]{Asp13}
Magnus Aspenberg.
\newblock The {C}ollet-{E}ckmann condition for rational functions on the
  {R}iemann sphere.
\newblock {\em Math. Z.}, 273(3-4):935--980, 2013.

\bibitem[Ast]{Astorg}
Matthieu {Astorg}.
\newblock {Summability Condition and Rigidity for Finite Type Maps}.
\newblock {\em ArXiv e-prints}, February 2016.

\bibitem[B]{bsurvey}
Francois Berteloot.
\newblock Bifurcation currents in holomorphic families of rational maps.
\newblock In {\em Pluripotential theory}, volume 2075 of {\em Lecture Notes in
  Math.}, pages 1--93. Springer, Heidelberg, 2013.

\bibitem[BB]{BB1}
Giovanni Bassanelli and Francois Berteloot.
\newblock Bifurcation currents in holomorphic dynamics on {$\mathbb{P}^k$}.
\newblock {\em J. Reine Angew. Math.}, 608:201--235, 2007.

\bibitem[BC]{BenCar}
Michael Benedicks and Lennart Carleson.
\newblock On iterations of {$1-ax^2$} on {$(-1,1)$}.
\newblock {\em Ann. of Math. (2)}, 122(1):1--25, 1985.

\bibitem[BE]{buffepstein}
Xavier Buff and Adam~L. Epstein.
\newblock Bifurcation measure and postcritically finite rational maps.
\newblock In {\em Complex dynamics : families and friends / edited by Dierk
  Schleicher}, pages 491--512. A K Peters, Ltd., Wellesley, Massachussets,
  2009.

\bibitem[BM]{rudiments}
Francois Berteloot and Volker Mayer.
\newblock {\em Rudiments de dynamique holomorphe}, volume~7 of {\em Cours
  Sp\'ecialis\'es}.
\newblock Soci\'et\'e Math\'ematique de France, Paris, 2001.

\bibitem[De]{DeMarco1}
Laura DeMarco.
\newblock Dynamics of rational maps: a current on the bifurcation locus.
\newblock {\em Math. Res. Lett.}, 8(1-2):57--66, 2001.

\bibitem[Du]{higher}
Romain Dujardin.
\newblock The supports of higher bifurcation currents.
\newblock {\em Ann. Fac. Sci. Toulouse Math. (6)}, 22(3):445--464, 2013.

\bibitem[DF]{favredujardin}
Romain Dujardin and Charles Favre.
\newblock Distribution of rational maps with a preperiodic critical point.
\newblock {\em Amer. J. Math.}, 130(4):979--1032, 2008.

\bibitem[G1]{Article1}
Thomas Gauthier.
\newblock Strong bifurcation loci of full {H}ausdorff dimension.
\newblock {\em Ann. Sci. \'Ec. Norm. Sup. (4)}, 45(6):947--984, 2012.

\bibitem[G2]{neutral}
Thomas Gauthier.
\newblock Higher bifurcation currents, neutral cycles, and the {M}andelbrot
  set.
\newblock {\em Indiana Univ. Math. J.}, 63(4):917--937, 2014.

\bibitem[GS]{GraSmi}
Jacek Graczyk and Stanislas Smirnov.
\newblock Collet, {E}ckmann and {H}\"older.
\newblock {\em Invent. Math.}, 133(1):69--96, 1998.

\bibitem[J]{Jonsson-rep}
Mattias Jonsson.
\newblock Holomorphic motions of hyperbolic sets.
\newblock {\em Michigan Math. J.}, 45(2):409--415, 1998.

\bibitem[Le]{levin2014perturbations}
Genadi Levin.
\newblock Perturbations of weakly expanding critical orbits.
\newblock {\em Frontiers in Complex Dynamics: In Celebration of John Milnor's
  80th Birthday}, pages 163--196, 2014.

\bibitem[Ly]{lyubich-stable}
Mikhail~Yu. Lyubich.
\newblock Investigation of the stability of the dynamics of rational functions.
\newblock {\em Teor. Funktsi\u\i\ Funktsional. Anal. i Prilozhen.},
  (42):72--91, 1984.
\newblock Translated in Selecta Math. Soviet. {{\bf{9}}} (1990), no. 1, 69--90.

\bibitem[Ma]{Mane}
Ricardo Ma{\~n}{\'e}.
\newblock On the uniqueness of the maximizing measure for rational maps.
\newblock {\em Bol. Soc. Brasil. Mat.}, 14(1):27--43, 1983.

\bibitem[Mc]{McMullen}
Curtis~T. McMullen.
\newblock {\em Complex dynamics and renormalization}, volume 135 of {\em Annals
  of Mathematics Studies}.
\newblock Princeton University Press, Princeton, NJ, 1994.

\bibitem[Mi]{Milnor}
John Milnor.
\newblock On {L}att\`es maps.
\newblock In {\em Dynamics on the {R}iemann sphere}, pages 9--43. Eur. Math.
  Soc., Z\"urich, 2006.

\bibitem[MSS]{MSS}
Ricardo Ma{\~n}{\'e}, Paulo Sad, and Dennis Sullivan.
\newblock On the dynamics of rational maps.
\newblock {\em Ann. Sci. \'Ecole Norm. Sup. (4)}, 16(2):193--217, 1983.

\bibitem[P]{Prz}
Feliks Przytycki.
\newblock Conical limit set and {P}oincar\'e exponent for iterations of
  rational functions.
\newblock {\em Trans. Amer. Math. Soc.}, 351(5):2081--2099, 1999.

\bibitem[PRLS]{PrzRLeSmi}
Feliks Przytycki, Juan Rivera-Letelier, and Stanislav Smirnov.
\newblock Equivalence and topological invariance of conditions for non-uniform
  hyperbolicity in the iteration of rational maps.
\newblock {\em Invent. Math.}, 151(1):29--63, 2003.

\bibitem[R]{Rees}
Mary Rees.
\newblock Positive measure sets of ergodic rational maps.
\newblock {\em Ann. Sci. \'Ecole Norm. Sup. (4)}, 19(3):383--407, 1986.

\bibitem[S]{Shishikura2}
Mitsuhiro Shishikura.
\newblock The {H}ausdorff dimension of the boundary of the {M}andelbrot set and
  {J}ulia sets.
\newblock {\em Ann. of Math. (2)}, 147(2):225--267, 1998.

\bibitem[SW]{SW}
Nessim Sibony and Pit~Mann Wong.
\newblock Some results on global analytic sets.
\newblock In {\em S\'eminaire {P}ierre {L}elong-{H}enri {S}koda ({A}nalyse).
  {A}nn\'ees 1978/79 ({F}rench)}, volume 822 of {\em Lecture Notes in Math.},
  pages 221--237. Springer, Berlin, 1980.

\bibitem[Ta]{Tan}
Lei Tan.
\newblock Similarity between the {M}andelbrot set and {J}ulia sets.
\newblock {\em Comm. Math. Phys.}, 134(3):587--617, 1990.

\bibitem[Ts]{Tsu}
Masato Tsujii.
\newblock Positive {L}yapunov exponents in families of one-dimensional
  dynamical systems.
\newblock {\em Invent. Math.}, 111(1):113--137, 1993.

\end{thebibliography}
\end{document}